\renewcommand\a{\alpha}
\renewcommand\b{\beta}
\newcommand\g{\gamma}
\renewcommand\d{\delta}
\newcommand\la{\lambda}
\newcommand\z{\zeta}
\newcommand\io{\iota}
\newcommand\s{\sigma}
\newcommand\f{\phi}
\newcommand\vf{\varphi}
\renewcommand\t{\tau}
\newcommand\Om{\Omega}
\newcommand\w{\omega}
\newcommand\vD{\varDelta}
\newcommand\vL{\varLambda}
\newcommand\vG{\varGamma}
\newcommand{\QQ}{\mathbb Q}
\newcommand{\FF}{\mathbb F}
\newcommand{\ZZ}{\mathbb Z}
\newcommand{\NN}{\mathbb N}
\newcommand{\CC}{\mathbb C}
\newcommand\BA{\mathbf A}
\newcommand\BE{\mathbf E}
\newcommand\BD{\mathbf D}
\newcommand\BZ{\mathbf Z}
\newcommand\BN{\mathbf N}
\newcommand\BJ{\mathbf J}
\newcommand\bB{\mathbf B}
\newcommand\BU{\mathbf U}
\newcommand\BV{\mathbf V}
\newcommand\Bk{\mathbf k}
\newcommand\Bh{\mathbf h}
\newcommand\Bi{\mathbf i}
\newcommand\Bj{\mathbf j}
\newcommand\Bn{\mathbf n}
\newcommand\Ba{\mathbf a}
\newcommand\Bc{\mathbf c}
\newcommand\Bd{\mathbf d}
\newcommand\Be{\mathbf e}
\newcommand\Bxi{\boldsymbol\xi}
\newcommand\Bz{\boldsymbol\z}
\newcommand\ZC{\mathcal{C}}
\newcommand\SC{\mathscr{C}}
\newcommand\SL{\mathscr{L}}
\newcommand\SM{\mathscr{M}}
\newcommand\SO{\mathscr{O}}
\newcommand\SP{\mathscr{P}}
\newcommand\SX{\mathscr{X}}
\newcommand\iv{^{-1}}
\newcommand\wt{\widetilde}
\newcommand\ol{\overline}
\newcommand\ul{\underline}
\newcommand\lra{\leftrightarrow}
\newcommand\ora{\overrightarrow}
\newcommand\Hom{\operatorname{Hom}}
\newcommand\Ind{\operatorname{Ind}}
\newcommand\Res{\operatorname{Res}}
\newcommand\Mod{\operatorname{-Mod}}
\newcommand\Tr{\operatorname{Tr}\,}
\newcommand\ch{\operatorname{ch}}
\newcommand\id{\operatorname{id}}
\newcommand\Id{\operatorname{Id}}
\renewcommand\Im{\operatorname{Im}}
\newcommand\weit{\operatorname{wt}}
\newcommand\Diag{\operatorname{Diag}}
\newcommand{\isom}{\,\raise2pt\hbox{$\underrightarrow{\sim}$}\,}
\numberwithin{equation}{section}
\newtheorem{thm}{Theorem}[section]
\newtheorem{lem}[thm]{Lemma}
\newtheorem{cor}[thm]{Corollary}
\newtheorem{prop}[thm]{Proposition}
\def \para#1{\par\medskip\textbf{#1}
              \addtocounter{thm}{1}}
\def \remark#1{\par\medskip\noindent
                \textbf{Remark #1}
                \addtocounter{thm}{1}}
\def \remarks#1{\par\medskip\noindent
                \textbf{Remarks #1}
                \addtocounter{thm}{1}}
\begin{document}
\setlength{\baselineskip}{4.9mm}
\setlength{\abovedisplayskip}{4.5mm}
\setlength{\belowdisplayskip}{4.5mm}
\renewcommand{\theenumi}{\roman{enumi}}
\renewcommand{\labelenumi}{(\theenumi)}
\renewcommand{\thefootnote}{\fnsymbol{footnote}}
\renewcommand{\thefootnote}{\fnsymbol{footnote}}
\parindent=20pt
\medskip
\begin{center}
  {\bf Algorithm for computing canonical bases \\
       and foldings of quantum groups} 
\par
\vspace{1cm}
Toshiaki Shoji and Zhiping Zhou
\\
\vspace{0.7cm}
\title{}
\end{center}

\begin{abstract} 
Let $\BU_q^-$ be the negative half of the quantum group of finite type.
Let $P$ be the transition matrix between the canonical basis and a PBW basis of $\BU_q^-$.
In the case where $\BU_q^-$ is symmetric, Antor gave a simple algorithm of computing
$P$ by making use of monomial bases. 
By the folding theory, $\BU_q^-$ (symmetric, with a certain
automorphism) is related to the quantum group $\ul\BU_q^-$ of non-symmetric type.
In this paper, we extend the results of Antor to the non-symmetric case, and discuss
the relationship between the algorithms for $\BU_q^-$ and for $\ul\BU_q^-$. 
\end{abstract}

\maketitle
\pagestyle{myheadings}
\markboth{SHOJI AND ZHOU}{ALGORITHM FOR CANONICAL BASES}

\bigskip
\medskip

\begin{center}
{\sc Introduction}  
\end{center}  

Let $\BU_q^-$ be the negative half of the quantum group $\BU_q$ of finite type,
associated to the Cartan datum $X = (I, (\ ,\ ))$, with the vertex set $I$.
Hence $\BU_q^-$ is an associative algebra over $\QQ(q)$ with generators
$f_i$ ($i \in I$) satisfying $q$-Serre relations. 
Let $w_0$ be the longest element of the Weyl group $W$ of $X$, and
for a reduced expression $w_0 = s_{i_1}\cdots s_{iN}$ of $W$, consider the
sequence $\Bh = (i_1, \dots, i_N) \in I^N$.  For such $\Bh$, a PBW basis
$\SX_{\Bh} = \{ L(\Bc, \Bh) \mid \Bc \in \NN^N \}$, and a canonical basis
$\bB_{\Bh} = \{ b(\Bc, \Bh) \mid \Bc \in \NN^N\}$ of $\BU_q^-$ are defined.
PBW bases depend on the choice $\Bh$, but
the canonical basis does not depend on $\Bh$, which is written as $\bB = \bB_{\Bh}$. 
The canonical basis $\bB$ was constructed by Lusztig \cite{L-can} in the case where
$X$ is symmetric of finite type, by using the representation theory of quivers.
In a subsequent paper \cite{L-quiver}, it was generalized to $\BU_q^-$ of symmetric Kac-Moody type,
by making use of the geometry of quivers
(called a geometric categorification of $\BU_q^-$). Finally in \cite{L-book}, it was extended to
the case of arbitrary Kac-Moody type, by considering the geometry of quivers with automorphisms
(called a geometric categorification with automorphisms).
At the same time, Kashiwara \cite{Ka} constructed, for arbitrary $\BU_q^-$ of Kac-Moody type,
the global crystal basis by using the theory of crystal basis.
It is known that these two bases are equal (\cite{GL},\cite{MSZ2}). 
\par
We return to the case where $\BU_q^-$ is of finite type. 
Canonical bases (or global crystal bases) are important objects for the representation
theory of quantum groups, but the direct computation is very hard.  While the construction
of PBW bases is more elementary, and PBW bases are appropriate for explicit computations. 
Hence it is desirable to know the transition matrix $P$ between $\SX_{\Bh}$ and $\bB_{\Bh}$. 
In $\BU_q^-$, there exists another basis $\SM_{\Bh} = \{ m(\Bc, \Bh) \mid \Bc \in \NN^N\}$
associated to $\Bh$, called
a monomial basis,  consisting of ``monomials'' $f_{i_1}f_{i_2} \cdots f_{i_k}$ of generators.
(See 1.7 for the definition of monomial bases.
Note that monomial bases are not unique even if $\Bh$ is fixed.
We discuss just by fixing one such a basis.)
They were originally introduced in \cite{L-can} for the symmetric case,
as a step for constructing canonical bases.  
\par
In the case where $X$ is symmetric, Antor proposed an algorithm of computing
the matrix $P$, which is explained as follows. There exists a canonical inner product
$(\ ,\ )$ on $\BU_q^-$. We consider the matrix
$\vL = \Bigl((m(\Bc,\Bh), m(\Bc', \Bh))\Bigr)_{\Bc, \Bc'}$
of inner products. Then various base changes among $\bB_{\Bh}, \SX_{\Bh}, \SM_{\Bh}$
produce certain matrix equations involving $\vL$ and $P$. He showed that there exists
a simple algorithm of computing $P$ from $\vL$. So the problem of computing $P$ is reduced to
the computation of the inner products $(F, F')$ for arbitrary monomials $F,F'$.   
Based on the geometric categorification of $\BU_q^-$ due to Lusztig, Antor
gave a closed formula for the inner product $(F, F')$.  Thus $P$ is computable once
the monomial basis $\SM_{\Bh}$ is explicitly given.
\par
Here we explain about the folding theory of quantum groups. 
Assume that $X$ is a symmetric
Cartan datum, with an admissible automorphism $\s$ on $X$ (see 3.1 for the definition).
Then $\s$ induces an automorphism on $\BU_q^-$, which we also denote by $\s$.
A Cartan datum $\ul X$ is defined from the pair $(X, \s)$, and one can define a
quantum group $\ul\BU^-_q$ associated to $\ul X$.
The folding theory discusses the relationship between $\BU_q^-$ and $\ul\BU_q^-$
(see \cite{L-book}, \cite{SZ1}, \cite{MSZ1}).
For example, let $\bB$ (resp. $\ul\bB$) be the set of canonical basis of
$\BU_q^-$ (resp. $\ul\BU_q^-$).
Then $\s$ permutes $\bB$, and the set $\bB^{\s}$ of $\s$-stable elements in $\bB$ is in
bijection with $\ul\bB$.  If we choose a suitable $\Bh$ (resp. $\ul\Bh$)
for $\BU_q^-$ (resp. $\ul\BU_q^-$),  
then a similar result holds for PBW bases, namely, $\s$ permutes $\SX_{\Bh}$, and
the set $\SX_{\Bh}^{\s}$ is in bijection with $\SX_{\ul\Bh}$. 
\par
In this paper, we generalize the result of Antor for the non-symmetric case, 
in connection with the folding theory of quantum groups.  So we consider
$\BU_q^-$ and $\ul\BU_q^-$ as above. 
The algorithm of computing $P$ from $\vL$ works well for $\ul\BU_q^-$, once the monomial
basis is defined.  Then the computation of $P$ is reduced
to the computation of inner products $(F, F')$ for monomials in $\ul\BU_q^-$. 
\par
Let $\SM_{\Bh}$ be the set of
monomial basis of $\BU_q^-$ constructed in \cite{L-can}. By mimicking his construction,
and by the aide of the folding theory, we can construct the monomial basis $\SM_{\ul\Bh}$
for $\ul\BU_q^-$ (for a suitable choice of $\Bh$ and $\ul\Bh$).
But note that $\s$ does not act on the set $\SM_{\Bh}$.  We modify his construction, and
define a new monomial basis $\wt\SM_{\Bh}$.  
We show in Proposition 4.18 that $\s$ permutes the set $\wt\SM_{\Bh}$,
and the set $\wt\SM_{\Bh}^{\s}$ of $\s$-fixed elements is in bijection
with the set $\SM_{\ul\Bh}$.
\par
So far, a good monomial basis of $\ul\BU_q^-$ was constructed,
from a view point of the folding theory,
next step is to find a closed formula for the inner products $(F, F')$ of monomials
in $\ul\BU_q^-$. The method by Antor based on the geometric categorification can not be applied
directly to our situation.  Probably, a geometric categorification with automorphisms will work
for our case. 
\par
However, in this paper, we use another type of categorification, the categorification
in terms of KLR algebras. KLR algebras associated to the Cartan datum $X$ were introduced by
Khovanov-Lauda \cite{KL1} and Rouquier \cite{R}, independently.  They have proved that
$\BU_q^-$ are categorified in terms of KLR algebras, in the case where $X$ is symmetric,
Kac-Moody type. This result was generalized by \cite{KL2} for the arbitrary Kac-Moody type. 
We now consider the folding theory setup, the Cartan data $X$ with automorphism $\s$,
and $\ul X$.  Then one can consider the KLR algebra with automorphism, associated to
$(X, \s)$. 
McNamara \cite{M} categorified $\ul\BU_q^-$ associated to $\ul X$ in terms of
the KLR algebra associated to $(X, \s)$, which is an analogue of Lusztig's geometric
categorification with automorphisms. 
\par
McNamara's categorification behaves well for the folding theory setup.  
We can compute the inner product $(F, F')$ by making use of the MacKey formula for
the representations of KLR algebras, and we obtain a closed formula for them (Theorem 5.20).
\par
Finally, returning to the original setting, we consider $\BU_q^-$ with automorphism
$\s$, and $\ul\BU_q^-$. 
Let $\wt m(\Bc, \Bh)$ be the (modified) monomial basis on $\BU_q^-$, and
let $\vL = \Bigl((\wt m(\Bc, \Bh), \wt m(\Bc', \Bh))\Bigr)$
be the matrix of the inner products.
Similarly, let $m(\ul\Bc, \ul\Bh)$ be the monomial basis of $\ul\BU_q^-$, and
consider the matrix $\ul\vL = \Bigl((m(\ul\Bc, \ul\Bh), m(\ul\Bc', \ul\Bh))\Bigr)$.
By using Theorem 5.20, we can compute $\vL$ and $\ul\vL$ explicitly. 
We denote by $\vL^{\s}$ the submatrix of $\vL$ obtained from
$\s$-stable $\wt m(\Bc, \Bh), \wt m(\Bc',\Bh)$. We compare these two matrices
$\vL^{\s}$ and $\ul\vL$. Our final result is
Theorem 6.7, which asserts that the matrix $\ul\vL$ is determined completely from the
matrix $\vL^{\s}$.
\par
Let $P$ be the transition matrix between the canonical basis
$\bB_{\Bh}$ and the PBW basis $\SX_{\Bh}$ of $\BU_q^-$, and $\ul P$ the transition matrix
between the canonical basis $\bB_{\ul\Bh}$ and the PBW basis $\SX_{\ul\Bh}$ of $\ul\BU_q^-$
(under a certain choice
of $\Bh$ and $\ul\Bh$). We can define a submatrix $P^{\s}$ similarly to $\vL^{\s}$.
Assume that $\BU_q^-$ is irreducible.  Then the order of $\s$ is equal to $p = 2$ or 3,
a prime number.  It is known by \cite{SZ1}
that $P^{\s} \equiv \ul P \mod p\ZZ[q]$. 
\par
By our algorithm, $\ul P$ is determined from $\ul\vL$.  Thus Theorem 6.7 implies that
the matrix $\ul P$ is determined from the matrix $\vL$ for $\BU_q^-$. 
In other words, $\ul P$ for $\ul\BU_q^-$
is determined completely (not for modulo $p$) from the information for a monomial basis
of $\BU_q^-$, though it is not certain whether $\ul P$ is determined from $P$. 
\par\bigskip
\begin{center}
{\sc Contents}
\end{center}
\par
1. Algorithm for computing canonical bases
\par
2. Commutation formulas for root vectors
\par
3. Foldings of quantum groups
\par
4. Monomial bases
\par
5. Mackey filtration for KLR algebras
\par
6. Comparison of algorithms
\par
7. Examples

\par\bigskip

\section{ Algorithm for computing canonical bases}

\para{1.1.}
Let $X = (I, (\ ,\ ))$ be a Cartan datum of finite type, where
$I$ is a finite set, and $(\ ,\ )$ is a symmetric bilinear form on the
$\QQ$-vector space $E = \bigoplus_{i \in I}\QQ \a_i$ with basis $\a_i$, satisfying
the property
\par\medskip
\par \ $(\a_i,\a_i) \in 2\ZZ_{> 0}$ for any $i \in I$,  \\
\par \ $\frac{2(\a_i,\a_j)}{(\a_i,\a_i)} \in \ZZ_{\le 0}$  for any $i \ne j \in I$.
\par\medskip
The Cartan datum $X$ is called symmetric if the Cartan matrix $(a_{ij})$ is
symmetric, where $a_{ij} = 2(\a_i,\a_j)/(\a_i,\a_i)$.   
Let $Q = \bigoplus_{i \in I}\ZZ \a_i$ be the root lattice of $X$.  We set
$Q_+ = \sum_{i \in I}\NN \a_i$, and $Q_- = -Q_+$.  
For $\g = \sum_i n_i\a_i \in Q$, we define $|\g| = \sum_i|n_i|$.  

\para{1.2.}
Let $q$ be an indeterminate. For an integer $n$, a positive integer $m$, set
\begin{equation*}
[n] = \frac{q^n- q^{-n}}{q - q\iv}, \qquad [m]^! = [1][2]\cdots [m], \quad [0]^! = 1.
\end{equation*}

Let $\BA = \ZZ[q,q\iv]$ be the ring of Laurent polynomials.  Then $[n], [m]^! \in \BA$. 
For each $i \in I$, set $q_i = q^{(\a_i,\a_i)/2}$.  We denote by $[n]_i$ 
the element obtained from $[n]$ by replacing $q$ by $q_i$.  $[m]^!_i$ is defined similarly.
\par
For the Cartan datum $X$, let $\BU_q^- = \BU_q^-(X)$ be the negative half of
the quantum group $\BU_q$ associated to $X$.  $\BU_q^-$ is an associative algebra over
$\QQ(q)$ generated by $f_i \ (i \in I)$ satisfying the $q$-Serre relations
\begin{equation*}
\sum_{k + k' = 1-a_{ij}}(-1)^kf_i^{(k)}f_jf_i^{(k')} = 0 \quad\text{ for } i \ne j \in I,
\end{equation*}
where we set $f_i^{(n)} = f_i^n/[n]_i^!$ for any $n \in \NN$. 
\par
Let ${}_{\BA}\BU_q^-$ be Lusztig's integral form on $\BU_q^-$,
namely, the $\BA$-subalgebra of $\BU_q^-$ generated by $f_i^{(n)}$ for $i \in I, n \in \NN$.
\par
Let ${}^* : \BU_q^- \to \BU_q^-$ be the anti-automorphism defined by
$f_i \mapsto f_i$ for any $i \in I$, which is called the anti-involution on $\BU_q^-$.  

\para{1.3.}
$\BU_q^-$ has a weight space decomposition $\bigoplus_{\g \in Q_-}(\BU_q^-)_{\g}$,
where $(\BU_q^-)_{\g}$ is a subspace of $\BU_q^-$ spanned by elements $f_{i_1}\cdots f_{i_k}$
such that $\a_{i_1} + \cdots + \a_{i_k} = -\g$. $x \in \BU_q^-$ is called homogeneous
with $\weit x = \g$ if $x \in (\BU_q^-)_{\g}$.  We define a multiplication
on $\BU_q^-\otimes \BU_q^-$ by
\begin{equation*}
(x_1\otimes x_2)\cdot (x_1'\otimes x_2') = q^{-(\weit x_2, \weit x_1')}x_1x_1'\otimes x_2x_2',
\end{equation*}
where $x_1, x_1', x_2, x_2'$ are homogeneous in $\BU_q^-$. Then $\BU_q^-\otimes \BU_q^-$
turns out to be an associative algebra with respect to this twisted product.
One can define a homomorphism $r : \BU_q^- \to \BU_q^-\otimes \BU_q^-$ by
$r(f_i) = f_i\otimes 1 + 1 \otimes f_i$ for each $i \in I$.
There exists a unique bilinear form $(\ ,\ )$ on $\BU_q^-$ satisfying the following
properties; $(1, 1) = 1$, and
\begin{align*}
  (f_i, f_j) &= \d_{ij}(1-q_i^2)\iv, \\
  (x, y'y'') &= (r(x), y'\otimes y''),  \\
  (x'x'', y) &= (x'\otimes x'', r(y)),
\end{align*}
where the bilinear form on $\BU_q^-\otimes \BU_q^-$ is defined by
$(x_1\otimes x_2, x_1'\otimes x_2') = (x_1,x_1')(x_2, x_2')$.
Thus defined bilinear form is symmetric and non-degenerate.
The bilinear form $(\ ,\ )$ is called the inner product on $\BU_q^-$.
\par
The inner product satisfies the property
\begin{equation*}
\tag{1.3.1}  
\bigl((\BU_q^-)_{\g}, (\BU_q^-)_{\g'}\bigr) = 0 \quad\text{ if } \quad \g \ne \g'.
\end{equation*}
\para{1.4.}
Let $\vD$ be the root system associated to $X$, and $\vD^+$ the set of
positive roots.
Let $W$ be the Weyl group of $X$ which is generated by
simple reflections $s_i$ corresponding to $\a_i$.
Let $w_0$ be the longest element in $W$, and
let $w_0 = s_{i_1}s_{i_2}\cdots s_{i_N}$ be a reduced expression of $w_0$,
where $N = |\vD^+|$.  We set $\Bh = (i_1, \dots, i_N)$, and denote it
a reduced sequence of $w_0$. We fix a reduced sequence $\Bh$.  For each
$1 \le k \le N$, set $\b_k = s_{i_1}s_{i_2}\cdots s_{i_{k-1}}(\a_{i_k})$.
It is known that $\b_k \in \vD^+$, and $\b_1, \dots, \b_N$ are all distinct.
Thus $\vD^+ = \{ \b_1, \dots, \b_N\}$, and $\Bh$ determines a total order on
the set $\vD^+$.
\par
Let $T_i : \BU_q \to \BU_q$ be the braid group action (which coincides with
Lusztig's braid group action $T_{i,1}''$ in \cite{L-book}), and set
\begin{equation*}
F_{\b_k}^{(n)} = T_{i_1}T_{i_2}\cdots T_{i_{k-1}}(f_{i_k}^{(n)}).
\end{equation*}
Then $F_{\b_k}^{(n)} \in \BU_q^-$, and $\weit(F_{\b_k}^{(n)}) = -n\b_k$.
If $n = 1$, we write it as $F_{\b_k}$. 
$F_{\b_k}$ is called a root vector in $\BU_q^-$ corresponding to
the root $\b_k$.  For $\Bc = (c_1, \dots, c_N) \in \NN^N$,
we define
\begin{equation*}
L(\Bc, \Bh) = F_{\b_1}^{(c_1)}F_{\b_2}^{(c_2)}\cdots F_{\b_N}^{(c_N)}.
\end{equation*}  
Then the set $\SX_{\Bh} = \{ L(\Bc, \Bh) \mid \Bc \in \NN^N\}$ gives a basis
of $\BU_q^-$, which is called the PBW basis of $\BU_q^-$ associated to $\Bh$.
$\SX_{\Bh}$ gives an $\BA$-basis of ${}_{\BA}\BU_q^-$. 
\par
The inner product for PBW bases were computed in \cite[38.2]{L-book} as follows.
For $\Bc = (c_1, \dots, c_N), \Bc' = (c_1', \dots, c_N') \in \NN^N$, we have
\begin{equation*}
\tag{1.4.1}  
\bigl(L(\Bc,\Bh), L(\Bc',\Bh)\bigr)
      = \prod_{k=1}^N(f_{i_k}^{(c_k)}, f_{i_k}^{(c'_k)})
      = \prod_{k = 1}^N \d_{c_k, c_k'}\prod_{d=1}^{c_k} \frac{1}{1- q_{i_k}^{2d}}.
\end{equation*}  
In particular, $\SX_{\Bh}$ is an orthogonal basis of $\BU_q^-$
with respect to the inner product. 

\para{1.5.}
The bar involution ${}^- : \BU_q^- \to \BU_q^-$ is an $\QQ$-algebra
automorphism given by $q \mapsto q\iv, f_i \mapsto f_i \ (i \in I)$.
We define a total order on $\NN^N$ by $\Bc < \Bc'$, for
$\Bc = (c_1, \dots, c_N), \Bc' = (c_1', \dots, c_N')$,  if and only if
there exists $k$ such that $c_1 = c_1', \dots, c_{k-1} = c_{k-1}'$ and $c_k < c_k'$. 
The following triangularity property holds for PBW basis;

\begin{equation*}
\tag{1.5.1}  
  \ol{ L(\Bc, \Bh)} = L(\Bc,\Bh) + \sum_{\Bc' > \Bc}a_{\Bc', \Bc}L(\Bc',\Bh)
       \quad\text{ with } \quad a_{\Bc',\Bc} \in \BA.
\end{equation*}  
By making use of (1.5.1), the following result is proved.
\par\medskip\noindent
(1.5.2) \ For a given $L(\Bc,\Bh)$, there exists a unique element
$b(\Bc, \Bh)$ satisfying the properties
\par\medskip
(i) $\ol{b(\Bc,\Bh)} = b(\Bc,\Bh)$, 
\par
(ii) 
 $b(\Bc, \Bh) = L(\Bc,\Bh) + \sum_{\Bc' > \Bc}p_{\Bc',\Bc}L(\Bc', \Bh),
\quad \text{ with }\quad  p_{\Bc',\Bc} \in q\ZZ[q]$.

\par\medskip
Then $\bB_{\Bh} = \{ b(\Bc,\Bh) \mid \Bc \in \NN^N\}$ gives a basis of $\BU_q^-$,
and an $\BA$ basis of ${}_{\BA}\BU_q^-$.
$\bB_{\Bh}$ is called the canonical basis of $\BU_q^-$ associated to $\Bh$.

\remark{1.6.}
In the case where $\BU_q^-$ is of symmetric type,
the canonical basis $\bB_{\Bh}$ was constructed in \cite{L-can}, by making use of
the representation theory of quivers, and it was shown that $\bB_{\Bh}$
does not depend on $\Bh$, which
we denote as $\bB_{\Bh} = \bB$.  For the non-symmetric case, 
the independence of $\Bh$ was verified by Saito \cite{S} by making use of the theory
of crystal bases. Thus the canonical basis $\bB$ exists. 

\para{1.7.}
Besides the PBW basis $\SX_{\Bh}$, and the canonical basis $\bB_{\Bh}$,
we consider another type of basis associated to $\Bh$.
A basis $\SM_{\Bh} = \{ m(\Bc, \Bh) \mid \Bc \in \NN^N\}$
is called a monomial basis if it satisfies the properties;
\par\medskip\noindent
(1.7.1)
\begin{enumerate}
\item   \ $m(\Bc,\Bh)$ is written as a ``monomial'' of generators $f_i^{(n)}$
such as $m(\Bc,\Bh) = f_{i_1}^{(d_1)}f_{i_2}^{(d_2)}\cdots f_{i_k}^{(d_k)}$
for $i_k \in I, d_k \in \NN$.
\item  \ $m(\Bc,\Bh)$ is written as
\begin{equation*}
  m(\Bc,\Bh) = L(\Bc,\Bh) + \sum_{\Bc' > \Bc}h_{\Bc',\Bc}L(\Bc',\Bh)
                    \quad\text{ with }\quad h_{\Bc',\Bc} \in \BA. 
\end{equation*}  
\end{enumerate}
Note that any element in $\SM_{\Bh}$ is bar-invariant, hence
the monomial basis is regarded as an intermediate object between the PBW basis and
the canonical basis.  But $\SM_{\Bh}$ is not unique for a given $\Bh$, even if it exists. 

\remark{1.8.}
A monomial basis $\SM_{\Bh}$ was constructed by Lusztig \cite{L-can}, in the case
where $\BU_q^-$ is symmetric, by making use of the representation theory of
quivers.  In the symmetric case, there are related works \cite{DD}, \cite{Re}
for constructing monomial bases, based on the Ringel-Hall algebras associated to quivers. 
In the general case, a monomial basis was constructed by
Chari and Xi \cite{CX} by case by case consideration.
In the book \cite{DDPW}, the authors developed the theory of Ringel-Hall algebras
associated to quivers with automorphism, and generalized the result of
\cite{DD} to the non-symmetric case. 

\para{1.9.}
In the discussion below, we fix a monomial bases $\SM_{\Bh}$.
Following Antor \cite{A}, we give an algorithm of expressing the canonical basis
in terms of the PBW basis.
We denote by $P = (p_{\Bc, \Bc'})$ the transition matrix from
the basis $\SX_{\Bh}$ to the basis $\bB_{\Bh}$ with respect to the total order on $\NN^N$
given in 1.5.
Then $P$ is a lower unitriangular matrix with coefficients in $q\ZZ[q]$.
We are interested in computing the matrix $P$. 
\par
We define the matrices
\begin{align*}
  D = \Bigl((L(\Bc,\Bh), L(\Bc',\Bh))\Bigr),
         \qquad \Om = \Bigl((b(\Bc,\Bh), b(\Bc', \Bh))\Bigr) 
\end{align*}  
indexed by $\Bc, \Bc' \in \NN^N$.  Since PBW bases are mutually orthogonal with respect to
$(\ \ )$, $D$ is a diagonal matrix.
We have a relation of matrices 
\begin{equation*}
\tag{1.9.1}  
\Om = {}^tPDP.
\end{equation*}  

Let $H = (h_{\Bc,\Bc'})$ be the transition matrix from the basis $\SX_{\Bh}$ to the basis 
$\SM_{\Bh}$.  Then by (1.7.1) (ii), $H$ is a lower unitriangular matrix
with coefficients in $\BA$.  
\par
We define a matrix $\vL$ by
\begin{equation*}
\vL = \Bigl(\bigl(m(\Bc,\Bh), m(\Bc',\Bh)\bigr)\Bigr)
\end{equation*}  
Then one can write as
\begin{equation*}
\tag{1.9.2}  
\vL = {}^tHDH.
\end{equation*}

Let $Q = (q_{\Bc,\Bc'})$ be the transition matrix from the basis $\bB_{\Bh}$
to the basis $\SM_{\Bh}$.
Then one can write as 
\begin{equation*}
 m(\Bc, \Bh) = b(\Bc, \Bh) + \sum_{\Bc' > \Bc}q_{\Bc',\Bc}b(\Bc',\Bh),
\end{equation*}  
and $Q$ is a lower unitriangular matrix with coefficients in $\BA$.
We have
\begin{equation*}
\tag{1.9.3}
H = PQ.
\end{equation*}  
Note that $m(\Bc, \Bh)$ and $b(\Bc',\Bh)$ are bar-invariant, hence
$q_{\Bc,\Bc'}$ is bar-invariant, namely, we have
\begin{equation*}
\ol{q_{\Bc,\Bc'}} = q_{\Bc,\Bc'} \quad\text{ with }\quad q_{\Bc,\Bc'} \in \BA.
\end{equation*}  

The following result was proved by Antor \cite[Cor. 2.3]{A}.

\begin{prop}
There exist matrix equations
\begin{equation*}
\tag{1.10.1}  
\vL = {}^tHDH, \qquad H = PQ,
\end{equation*}
where
\par
$H$ : a lower unitriangular matrix with coefficients in $\BA$,

$D$ : a diagonal matrix with coefficients in $\QQ(q)$,

$P$ : a lower unitriangular matrix with coefficients in $q\ZZ[q]$, 

$Q$ : a lower unitriangular matrix where the coefficients are all
bar-invariant and belong to $\BA$.
\par\medskip
The matrices $H, D, Q$ and $P$ are determined uniquely from $\vL$ by these conditions,
and there exists an algorithm of computing $H, D, Q, P$ starting from $\vL$.  
\end{prop}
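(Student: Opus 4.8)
The plan is to reduce the proposition to three classical ingredients: uniqueness of a symmetric ${}^tHDH$-factorization with unitriangular outer factor, Lusztig's lemma (the Kazhdan--Lusztig type recursion), and the trivial remark that $q\ZZ[q]$ contains no nonzero bar-invariant element. Existence is already in hand: one takes $H$ to be the transition matrix from $\SX_\Bh$ to $\SM_\Bh$ (lower unitriangular over $\BA$ by (1.7.1)(ii)), $D=\bigl((L(\Bc,\Bh),L(\Bc',\Bh))\bigr)$ (diagonal with nonzero entries in $\QQ(q)$ by (1.4.1)), $P$ the transition matrix from $\SX_\Bh$ to $\bB_\Bh$ (lower unitriangular with off-diagonal entries in $q\ZZ[q]$ by (1.5.2)), and $Q$ the transition matrix from $\bB_\Bh$ to $\SM_\Bh$ (lower unitriangular over $\BA$, with bar-invariant entries since both $m(\Bc,\Bh)$ and $b(\Bc,\Bh)$ are bar-invariant); then $\vL={}^tHDH$ and $H=PQ$ are exactly (1.9.2) and (1.9.3). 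So the real content is uniqueness together with the algorithm, and the algorithm will deliver both.

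For uniqueness I would proceed in two stages. First, $\vL$ is invertible (it is the Gram matrix of a basis for a non-degenerate form), so in any factorization $\vL={}^tHDH$ with $H$ unitriangular the diagonal matrix $D$ is invertible; the factorization of an invertible symmetric matrix in the form ${}^tHDH$ with $H$ unitriangular and $D$ diagonal is unique, as follows from the standard symmetric elimination procedure, so $H$ and $D$ are pinned down by $\vL$. Second, suppose $H=PQ=P'Q'$ are two factorizations of the stated type; then $S:=(P')\iv P=Q'Q\iv$ is lower unitriangular. As a product of lower unitriangular matrices whose off-diagonal entries lie in $q\ZZ[q]$ its off-diagonal entries lie in $q\ZZ[q]$, and as a product of unitriangular matrices with bar-invariant entries its entries are bar-invariant; since a bar-invariant element of $q\ZZ[q]$ vanishes, $S=I$, whence $P=P'$ and $Q=Q'$.

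The algorithm, carried out one $Q_-$-weight space at a time so that all index sets are finite, has four steps. (i) Recover $H$ and $D$ from $\vL$ by symmetric Gaussian elimination along the total order on $\NN^N$; the pivots are the diagonal entries of $D$ and do not vanish, and by the previous paragraph the output is the correct $(H,D)$. (ii) Form $A:=H(\ol H)\iv$; it is lower unitriangular over $\BA$, satisfies $A\,\ol A=I$, and $H=A\,\ol H$, so $A$ is precisely the bar matrix of the PBW basis, recovered purely from the computed $H$. (iii) Apply Lusztig's lemma to $A$ to obtain $P=(p_{\Bc',\Bc})$: set $p_{\Bc,\Bc}=1$, and inductively on $\Bc'>\Bc$ in the total order form $\sigma_{\Bc',\Bc}:=a_{\Bc',\Bc}+\sum_{\Bc<\Bc''<\Bc'}a_{\Bc',\Bc''}\,\ol{p_{\Bc'',\Bc}}\in\BA$; one checks $\ol{\sigma_{\Bc',\Bc}}=-\sigma_{\Bc',\Bc}$, so that $\sigma_{\Bc',\Bc}=\sum_{n>0}b_n(q^n-q^{-n})$ with $b_n\in\ZZ$, and $p_{\Bc',\Bc}:=\sum_{n>0}b_nq^n$ is the unique element of $q\ZZ[q]$ with $p_{\Bc',\Bc}-\ol{p_{\Bc',\Bc}}=\sigma_{\Bc',\Bc}$; this produces the unique lower unitriangular $P$ with off-diagonal entries in $q\ZZ[q]$ such that $P=A\,\ol P$. (iv) Set $Q:=P\iv H$; it is lower unitriangular over $\BA$, and $\ol Q=(\ol P)\iv\,\ol H=(A\iv P)\iv(A\iv H)=P\iv H=Q$ is bar-invariant. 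The quadruple $(H,D,P,Q)$ so produced satisfies (1.10.1) and all the listed conditions, hence is the unique one.

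I do not expect a genuine obstacle: the argument is an assembly of known pieces in the right order. The point to be careful about is keeping the procedure self-contained --- the bar involution of $\BU_q^-$ must be seen to enter only through the matrix $A=H(\ol H)\iv$, built from the already-computed $H$, rather than imported from $\BU_q^-$ --- and the verification that step (iii) is well posed, i.e. that $\sigma_{\Bc',\Bc}$ is always anti-bar-invariant so that a (necessarily unique) solution in $q\ZZ[q]$ exists. That verification is the inductive heart of Lusztig's lemma and is exactly where $A\,\ol A=I$ is used.
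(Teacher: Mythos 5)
Your existence argument and your uniqueness argument match the paper's: existence is read off the definitions of the transition matrices and their triangularity properties, and uniqueness rests on exactly the paper's observation that $(P')^{-1}P=Q'Q^{-1}$ is simultaneously a unitriangular matrix with off-diagonal entries in $q\ZZ[q]$ and one whose entries are bar-invariant, hence is the identity. Where you genuinely diverge is in the algorithm for splitting $H=PQ$. The paper works entry by entry: ordering by $i-j$ (the paper writes ``induction on $i+j$'', evidently a slip for $i-j$), at stage $(i,j)$ the scalar $a:=h_{ij}-\sum_{j<k<i}p_{ik}q_{kj}=p_{ij}+q_{ij}$ is already known from earlier stages, and one splits it using the direct-sum decomposition of $\BA$ into $q\ZZ[q]$ plus bar-invariant elements: writing $a=a_++a_0+a_-$ with $a_+\in q\ZZ[q]$, $a_0\in\ZZ$, $a_-\in q^{-1}\ZZ[q^{-1}]$, one sets $p_{ij}=a_+-\ol{a_-}$ and $q_{ij}=a_-+\ol{a_-}+a_0$. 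You instead reconstruct the bar matrix $A=H(\ol H)^{-1}$ of the PBW basis, run Lusztig's lemma on $A$ to obtain $P$, and back out $Q=P^{-1}H$. Both are correct, and by the uniqueness already established they yield the same quadruple. The paper's single pass is leaner --- no matrix inversion, no separate recursion loop --- while your route makes visible a structural point the paper passes over, namely that the bar matrix of the PBW basis is itself computable from the Gram matrix $\vL$ via $H$, and it outsources the well-posedness bookkeeping to the standard anti-bar-invariance check inside Lusztig's lemma.
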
  
\begin{proof}
Let $Z = (z_{\Bc,\Bc'})$ be one of the matrices $\vL, H, D, P, Q$
appearing in the proposition. For each $\g, \g' \in Q_+$,  let $Z_{\g, \g'}$
be the submatrix consisting of $z_{\Bc,\Bc'}$ such that
$\sum_{1 \le i \le N}c_i\b_i = \g, \sum_{1 \le i \le N}c'_i\b_i = \g'$,
where $\vD^+ = \{ \b_1, \dots, \b_N\}$ is as in 1.4. Then $Z$ is regarded as a block
matrix, indexed by $\g, \g'$.  Assume that $\g \ne \g'$.
On the one side, $Z_{\g,\g'} = 0$ for $Z = H, P, Q$ from the definition.
On the other hand, by (1.3.1), $Z_{\g, \g'} = 0$ for $Z = \vL, D$.
Hence in the computation of matrices $Z$, we may only consider the part $Z_{\g,\g}$. 
Note that $Z_{\g,\g}$ is a matrix of finite rank.
The matrix relation in (1.10.1) still holds by restricting to $Z_{\g,\g}$-part.
Hence in the discussion below, we fix $\g \in Q_+$, and consider the restriction
to $Z_{\g, \g}$.  We write them as $Z$ by omitting $\g$,
but assume that $Z$ is a matrix of finite rank. 
\par
Since $H$ is lower unitriangular and $D$ is diagonal, it is well-known that
the matrix equation $\vL = {}^tHDH$ determines $H$ and $D$ uniquely, and
there exists an algorithm,  based on the linear algebra,
of computing $H$ and $D$ starting from $\vL$. (Note that the computation is done
for the $\QQ(q)$ matrices, but the existence of a $\BA$-matrix $H$ and a $\QQ(q)$-matrix
$D$ is already guaranteed.)
\par
Thus it is enough to show that the matrix equation $H = PQ$ 
determines $P$ and $Q$ uniquely, and there exists an algorithm of computing
$P$ and $Q$ from $H$.
First we show the uniqueness of $P$ and $Q$.  Assume that
$H = PQ = P'Q'$, where $P',Q'$ satisfy similar conditions as $P,Q$.
Then $Q'Q\iv = {P'}\iv P$.
Off-diagonal coefficients of ${P'}\iv P$ are contained in $q\ZZ[q]$,
off-diagonal coefficients
of $Q'Q\iv $ are contained in $\ZZ[q,q\iv]$ and bar-invariant.  Hence they
are equal to 0.  Thus $P = P', Q = Q'$.
\par
Next we show there exists an algorithm of determining $P$ and $Q$.
Write, for $i > j$, 
\begin{align*}
  h_{ij} &= \sum_{j \le k \le i}p_{ik}q_{kj} = p_{ij} + q_{ij} + \sum_{j < k < i} p_{ik}q_{kj}.
\end{align*}
By induction on $i+j$, we may assume that $p_{ik}$ is known for $i+k < i+j$,
and $q_{kj}$ is known for $k + j < i+j$. Thus $a =  p_{ij} + q_{ij}$ is already determined.
We show this determines $p_{ij}, q_{ij}$ as required. 
Now $a \in \ZZ[q,q\iv]$ is written as $a = a_+ + a_0 + a_-$ with
$a_+ \in q\ZZ[q], a_- \in q\iv\ZZ[q\iv], a_0 \in \ZZ$.
Then $a$ is decomposed as 
\begin{equation*}
a = (a_- + \ol{a_-} + a_0) + (a_+ - \ol{a_-}). 
\end{equation*}
Here $a_+ - \ol{a_-} \in q\ZZ[q]$, and $a_- + \ol{a_-} + a_0$ is bar-invariant.
Hence $p_{ij} = a_+ - \ol{a_-}, q_{ij} = a_- + \ol{a_-} + a_0$ satisfies
the required decomposition. 
\end{proof}  

\para{1.11.}
Since the computation of PBW bases is relatively easy, it is important to know
the transition matrix $P$ for the computation of the canonical basis $\bB_{\Bh}$.
By Proposition 1.10, $P$ can be computed if one can
find a monomial basis $\SM_{\Bh}$ for which the matrix
$\vL = \Bigl((m(\Bc,\Bh), m(\Bc',\Bh))\Bigr)$ is explicitly computable. 
In later discussion, we show,  for a suitable  monomial basis $\SM_{\Bh}$,
that there exists a closed formula expressing the matrix $\vL$

\par\bigskip
\section { Commutation formulas for root vectors }
\para{2.1.}
Fix a reduced sequence $\Bh = (i_1, \dots, i_N)$ of $w_0$, and
let $\b_1, \dots, \b_N$ be the total order of $\vD^+$ associated to $\Bh$. 
For $\Bc = (c_1, \dots, c_N)$,
consider the PBW basis $L(\Bc, \Bh) = F_{\b_1}^{(c_1)}\cdots F_{\b_N}^{(c_N)}$,
where $F_{\b_k}^{(n)} = T_{i_1}T_{i_2}\cdots T_{i_{k-1}}(f_{i_k}^{(n)})$
is a root vector corresponding to the root $\b_k \in \vD^+$, and $n \in \NN$. 

\par
The following commutation formula for root vectors
was proved by Levendorskii-Soibelman \cite{LS}.

\begin{prop}  
Assume that $1 \le p' < p \le N$. Then we have 
\begin{equation*}
  F_{\b_p}F_{\b_{p'}} = q^{-(\b_p, \b_{p'})}F_{\b_{p'}}F_{\b_p}
  + \sum_{n_{p'+1}, \dots, n_{p-1} \ge 0}c(n_{p'+1}, \dots, n_{p-1})
           F_{\b_{p'+1}}^{(n_{p'+1})}\cdots F_{\b_{p-1}}^{(n_{p-1})} 
\end{equation*}
with $c(n_{p'+1}, \dots, n_{p-1}) \in \BA$. 
\end{prop}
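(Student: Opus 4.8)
The plan is to prove the Levendorskii--Soibelman formula by induction on $p - p'$, reducing everything to the rank $2$ situation via Lusztig's braid group operators. First I would record the weight constraint: since $\weit(F_{\b_p}F_{\b_{p'}}) = \weit(F_{\b_{p'}}F_{\b_p}) = -(\b_p + \b_{p'})$, any element appearing on the right-hand side must have weight $-(\b_p + \b_{p'})$, and since $\b_{p'+1}, \dots, \b_{p-1}$ together with $\b_{p'}, \b_p$ are the positive roots lying (in the $\Bh$-order) between positions $p'$ and $p$ inclusive, an expression $F_{\b_{p'+1}}^{(n_{p'+1})} \cdots F_{\b_{p-1}}^{(n_{p-1})}$ has the right weight only for finitely many tuples $(n_{p'+1}, \dots, n_{p-1})$; in particular the sum is finite, which is what makes the statement meaningful. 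So the real content is that when $F_{\b_p}F_{\b_{p'}} - q^{-(\b_p,\b_{p'})}F_{\b_{p'}}F_{\b_p}$ is expanded in the PBW basis $\SX_{\Bh}$, only monomials $L(\Bc,\Bh)$ with $c_j = 0$ for $j \le p'$ and for $j \ge p$ occur.

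The base case is $p = p' + 1$, i.e. adjacent roots. Here one applies $T_{i_1} \cdots T_{i_{p'-1}}$ to reduce to the case $p' = 1$, $p = 2$, so that $F_{\b_1} = f_{i_1}$ and $F_{\b_2} = T_{i_1}(f_{i_2})$; this is a computation inside the rank $2$ subalgebra generated by $f_{i_1}, f_{i_2}$, and one checks directly from the explicit rank $2$ formulas for $T_{i_1}(f_{i_2})$ (or from the known PBW commutation in rank $2$) that $F_{\b_2}F_{\b_1} = q^{-(\b_2,\b_1)}F_{\b_1}F_{\b_2}$ exactly, with no lower-order terms — the sum over the empty index set being zero. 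For the inductive step with $p - p' \ge 2$, I would again apply $T_{i_1}\cdots T_{i_{p'-1}}$ to assume $p' = 1$. Now consider the shifted sequence: $\Bh' = (i_2, \dots, i_N, \ast)$ obtained from a reduced expression of $w_0$ starting with $s_{i_1}$; equivalently, use that $T_{i_1}\iv$ carries the root vectors $F_{\b_2}, \dots, F_{\b_N}$ (for $\Bh$) to the root vectors for a shorter piece, reducing the span of the index gap by one. Writing $F_{\b_p}F_{\b_1} = q^{-(\b_p,\b_1)}F_{\b_1}F_{\b_p} + R$, one must show $R$ lies in the span of ordered monomials in $F_{\b_2}, \dots, F_{\b_{p-1}}$ only.

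The mechanism I would use for the inductive step is a ``straightening'' argument: expand the element $x := F_{\b_p}F_{\b_1} - q^{-(\b_p,\b_1)}F_{\b_1}F_{\b_p}$ in the PBW basis $\SX_{\Bh}$, and show (a) no $L(\Bc,\Bh)$ with $c_1 > 0$ appears, and (b) no $L(\Bc,\Bh)$ with $c_j > 0$ for some $j \ge p$ appears. For (a), I would use the operator $r$ (or rather the ``skew-derivation'' ${}_i r$ extracting the leading $f_{i_1}$): one computes ${}_{i_1}r$ applied to $x$ and shows it vanishes in the relevant weight, using that $F_{\b_1} = f_{i_1}$ is primitive for this operator while $F_{\b_p}$ with $p \ge 2$ lies in the kernel of the relevant component — this forces $c_1 = 0$. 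For (b), the dual statement uses the analogous right-hand skew-derivation together with the fact that $F_{\b_p}$ is the top root vector in the relevant range; alternatively one inducts using $T_{i_N}$ from the other end, or invokes that $\b_p$ being larger than all of $\b_2, \dots, \b_{p-1}$ means $F_{\b_p}$ can be pushed only to the left past things smaller than it. Combining (a) and (b), $x$ is a $\QQ(q)$-combination of ordered monomials in $F_{\b_2}, \dots, F_{\b_{p-1}}$; that the coefficients actually lie in $\BA = \ZZ[q,q\iv]$ follows because all the $F_{\b_k}^{(n)}$ lie in the integral form ${}_{\BA}\BU_q^-$ and $q^{-(\b_p,\b_1)} \in \BA$, so $x \in {}_\BA\BU_q^-$, and the PBW basis is an $\BA$-basis of ${}_\BA\BU_q^-$.

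The main obstacle I anticipate is step (b): controlling which large-index root vectors $F_{\b_j}$ with $j \ge p$ can appear. The cleanest route is probably to run a symmetric induction from both ends — having established by the left-hand argument that $x$ involves no $F_{\b_1}$, apply $T_{i_1}\iv$ to land in a smaller quantum group (the one attached to $s_{i_1}w_0$, of length $N-1$) and invoke the inductive hypothesis there for the pair of roots now in positions $p'-1 < p-1$ within a shorter window; one then has to check that $T_{i_1}\iv$ sends $F_{\b_k}$ to the $k$-th root vector of the shortened sequence for $k \ge 2$, which is Lusztig's standard compatibility $T_{i_1}\iv T_{i_1}T_{i_2}\cdots T_{i_{k-1}} = T_{i_2}\cdots T_{i_{k-1}}$. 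This bookkeeping — matching root vectors across the braid action and keeping the index windows aligned — is routine but is where all the care is needed; the algebra itself is confined to rank $2$ by the reductions.
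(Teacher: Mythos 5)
The paper does not prove Proposition 2.2; it simply cites Levendorskii--Soibelman \cite{LS}, so there is no internal proof to compare against. Your overall architecture is the standard braid-group approach, and step (a) is in fact correct: with the twisted Leibniz rule for the skew-derivation ${}_{i_1}r$ one gets ${}_{i_1}r(F_{\b_p}F_{\b_1}) = q^{-(\a_{i_1},\b_p)}F_{\b_p}$ and ${}_{i_1}r(F_{\b_1}F_{\b_p}) = F_{\b_p}$, so (since $\b_1 = \a_{i_1}$) ${}_{i_1}r(x)=0$ for $x := F_{\b_p}F_{\b_1} - q^{-(\b_p,\b_1)}F_{\b_1}F_{\b_p}$; combined with ${}_{i_1}r(F_{\b_k})=0$ for $k\ge 2$, this forces $c_1=0$ in the PBW expansion.

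Step (b) is where the real gap sits, and your sketch does not close it. Once $p'=1$ is fixed you cannot apply $T_{i_1}^{-1}$ to the expression for $x$ term by term, because $T_{i_1}^{-1}(F_{\b_1}) = T_{i_1}^{-1}(f_{i_1})$ lies outside $\BU_q^-$. It is true that $x\in\ker{}_{i_1}r$, hence $T_{i_1}^{-1}(x)\in\BU_q^-$ by Lusztig's lemma, but when one actually evaluates $T_{i_1}^{-1}(x)$ (passing through the commutator of $e_{i_1}$ with $G := T_{i_1}^{-1}(F_{\b_p})$ and the associated skew-derivation identity) the result is essentially a scalar multiple of $r_{i_1}(G)$, not a $q$-commutator of two root vectors of the shifted sequence. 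So the Levendorskii--Soibelman inductive hypothesis cannot be invoked ``for the pair of roots now in positions $p'-1 < p-1$'' --- and with $p'=1$ that phrase reads ``positions $0 < p-1$'', which does not parse. To finish one needs an extra input, for instance the theorem that the span of PBW monomials supported on an initial segment $[1,p]$ of the convex order is a subalgebra (the $\BU_q^-(w)$ of De Concini--Kac and Lusztig, usually proved by a joint double induction with the LS formula), combined with a separate argument to exclude $c_p>0$; or a full right-end argument via the $*$-anti-involution. You name these as possibilities but supply none of them, so as written the proof of (b) --- which you correctly flag as the hard part --- is not an argument, only a wish list.
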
  

A generalization of Levendorskii-Soibelman's formula was proved by
Xi \cite{X1}. 

\begin{prop}  
  For any $1 \le p' < p \le N$ and $s, t \in \NN$,
\begin{equation*}
  F_{\b_p}^{(s)}F_{\b_{p'}}^{(t)} = q^{-st(\b_p,\b_{p'})}F_{\b_{p'}}^{(t)}F_{\b_p}^{(s)}
        + \sum_{\substack{n_{p'}, \dots, n_p \ge 0 \\
            n_{p'} < t, n_p < s}}c(n_{p'}, \dots, n_p)
               F_{b_{p'}}^{(n_{p'})}\cdots F_{\b_p}^{(n_p)} 
\end{equation*}
with $c(n_{p'}, \dots, n_p) \in \BA$. 
\end{prop}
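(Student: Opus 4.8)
The plan is to deduce Proposition~2.3 from the Levendorskii--Soibelman formula (Proposition~2.2) by a double induction, using the divided-power relations for a single root vector together with the fact that $\BU_q^-$ has a PBW basis. First I would record the elementary identity $F_{\b_k}^{(n)}F_{\b_k}^{(m)} = \binom{n+m}{n}_{q_{i_k}}F_{\b_k}^{(n+m)}$, and more importantly the expansion of $f_{i_k}^{(s)}f_{i_k}^{(t)}$ type relations after applying the braid operator $T_{i_1}\cdots T_{i_{k-1}}$; these give control of products of divided powers of a \emph{single} $F_{\b_k}$. The strategy for the cross term is then: write $F_{\b_p}^{(s)} = \tfrac{1}{[s]_{i_p}^!}F_{\b_p}^{s}$ and $F_{\b_{p'}}^{(t)} = \tfrac{1}{[t]_{i_{p'}}^!}F_{\b_{p'}}^{t}$, move one factor of $F_{\b_p}$ past one factor of $F_{\b_{p'}}$ at a time using Proposition~2.2, and collect terms.

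The induction I would set up is on the pair $(s,t)$, say on $s+t$, with Proposition~2.2 serving as (a consequence of) the base case $s=t=1$. For the inductive step, I would start from $F_{\b_p}^{(s)}F_{\b_{p'}}^{(t)}$ and first commute $F_{\b_p}^{(s-1)}\cdot F_{\b_p}$ past $F_{\b_{p'}}^{(t)}$ using the case $(1,t)$ (itself obtained from the case $(1,t-1)$), producing a leading term $q^{\bullet}F_{\b_{p'}}^{(t)}F_{\b_p}^{(s)}$ plus an error supported on monomials $F_{\b_{p'+1}}^{(n_{p'+1})}\cdots F_{\b_{p-1}}^{(n_{p-1})}$ lying strictly between the indices $p'$ and $p$ (with no $F_{\b_{p'}}$ or $F_{\b_p}$ appearing), coming directly from Proposition~2.2. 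The remaining task is to absorb the various leftover factors $F_{\b_{p'}}^{(\text{small})}$ and $F_{\b_p}^{(\text{small})}$ that accumulate on the outside, and to re-expand every resulting product into the \emph{ordered} PBW monomials $F_{\b_{p'}}^{(n_{p'})}\cdots F_{\b_p}^{(n_p)}$. Since $\SX_{\Bh}$ is an $\BA$-basis of ${}_\BA\BU_q^-$ and every intermediate expression is a sum of products of $F_{\b_j}^{(n)}$ with $p'\le j\le p$, homogeneity (1.3.1) forces the expansion to involve only such ordered monomials with coefficients in $\BA$; this gives the $c(n_{p'},\dots,n_p)\in\BA$ automatically.

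It remains to check the two constraints $n_{p'}<t$ and $n_p<s$ on the error terms. For $n_{p'}<t$: the only way a monomial with $F_{\b_{p'}}^{(t)}$ (or higher power) as its leftmost factor can appear is the leading term $q^{-st(\b_p,\b_{p'})}F_{\b_{p'}}^{(t)}F_{\b_p}^{(s)}$ itself, because in the inductive reshuffling every commutation via Proposition~2.2 strictly decreases the total power of $F_{\b_{p'}}$ transported to the left (one unit of $F_{\b_{p'}}$ is "consumed" into the middle block each time), and the divided-power merging $F_{\b_{p'}}^{(a)}F_{\b_{p'}}^{(b)}\to F_{\b_{p'}}^{(a+b)}$ can only recreate the full power $t$ when no unit was consumed at all. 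The symmetric argument, commuting on the $F_{\b_p}$ side, gives $n_p<s$. I expect this bookkeeping --- tracking that the leading PBW term is the unique contribution reaching the extremal exponents, and that everything else stays strictly below --- to be the main obstacle; the cleanest way to handle it is probably to phrase the induction as: "$F_{\b_p}^{(s)}F_{\b_{p'}}^{(t)} - q^{-st(\b_p,\b_{p'})}F_{\b_{p'}}^{(t)}F_{\b_p}^{(s)}$ lies in the $\BA$-span of PBW monomials $F_{\b_{p'}}^{(n_{p'})}\cdots F_{\b_p}^{(n_p)}$ with $n_{p'}<t$ and $n_p<s$", which is stable under the reshuffling operations and under left/right multiplication by $F_{\b_{p'}}$ and $F_{\b_p}$ followed by re-ordering, so the inductive step reduces to Proposition~2.2 plus the single-root divided-power identities.
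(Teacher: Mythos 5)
The paper does not prove Proposition~2.3 at all: it is stated as a result of Xi and cited to \cite{X1}, so there is no ``paper proof'' to compare your argument against. What you have written is therefore an independent reconstruction, and the overall strategy --- double induction on $s+t$, reduction to the $s=t=1$ Levendorskii--Soibelman case, and using the fact that $\SX_\Bh$ is an $\BA$-basis of ${}_\BA\BU_q^-$ to get integrality of the coefficients once the support is pinned down --- is sound and close in spirit to Xi's original argument.

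Two remarks on the bookkeeping step, which you correctly identify as the crux. First, your ``consuming'' argument implicitly uses the fact that when Proposition~2.2 is applied to any pair $F_{\b_k}F_{\b_j}$ with $p'\le j<k\le p$, the middle terms involve only $F_{\b_\ell}$ with $j<\ell<k$, hence never regenerate an $F_{\b_{p'}}$ or an $F_{\b_p}$; this is what makes the count of $F_{\b_{p'}}$ and $F_{\b_p}$ factors monotone non-increasing along the reshuffling, and it should be stated explicitly. Second, there is a cleaner route to the constraints $n_{p'}<t$, $n_p<s$ that bypasses the bookkeeping entirely: every term has weight $-(s\b_p+t\b_{p'})$, so if an ordered monomial $F_{\b_{p'}}^{(n_{p'})}\cdots F_{\b_p}^{(n_p)}$ in the expansion had $n_{p'}\ge t$, then $(s-n_p)\b_p=(n_{p'}-t)\b_{p'}+\sum_{p'<k<p}n_k\b_k$, and the convexity of the order $\b_1,\dots,\b_N$ forces $s=n_p$ and all other $n_k=0$, i.e.\ the leading term; symmetrically for $n_p\ge s$. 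That weight-plus-convexity argument also directly yields that only indices in $[p',p]$ occur. Either way you also need to verify that the leading coefficient is exactly $q^{-st(\b_p,\b_{p'})}$, which does come from the ``no-consumption'' path; your sketch handles this correctly. Finally, the formulation of the inductive invariant you propose at the end is not literally stable under left/right multiplication by a single $F_{\b_{p'}}$ or $F_{\b_p}$ (that raises $n_{p'}$ or $n_p$ by one and can saturate the bound), so the induction should be organized on $s+t$ with the leading term separated first, rather than as a closed invariant set.
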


\para{2.4.} By making use of Proposition 2.3, we shall compute
the expansion of $L(\Bc, \Bh)L(\Bc',\Bh)$ in terms of the basis $\SX_{\Bh}$.
For $\Bc = (c_1, \dots, c_N) \in \NN^N$, we define a subset $s(\Bc)$ of $[1,N]$ by
$s(\Bc) = \{ k \in [1,N] \mid c_k \ne 0\}$.
The following formula is used in later discussion in Section 4.

\begin{prop}  
\begin{enumerate}
\item 
For $\Bc, \Bc' \in \NN^N$, we have
\begin{equation*}
\tag{2.5.1}
  L(\Bc,\Bh)L(\Bc',\Bh) = \sum_{\Bc''}a_{\Bc,\Bc'}^{\Bc''}L(\Bc'',\Bh),
\end{equation*}
where $a_{\Bc,\Bc'}^{\Bc''} \in \BA$, and
$\Bc''$ runs over all the elements in $\NN^N$ such that $\Bc'' \ge \Bc$.
\item
Write $L(\Bc,\Bh) = F_{\b_s}^{(c_s)}\cdots F_{\b_t}^{(c_t)}$,
$L(\Bc',\Bh) = F_{\b_{p'}}^{(c'_{p'})}\cdots F_{\b_p}^{(c'_p)}$, where 
all of $c_s, c_t$, $c'_{p'}, c'_p$ are non-zero.    
Then $s(\Bc'') \subset [a, b]$, where $a = \min\{ s, p'\}, b = \max\{ t, p\}$.
\end{enumerate}  
\end{prop}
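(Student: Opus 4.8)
The plan is to straighten the product $L(\Bc,\Bh)L(\Bc',\Bh)$ into the PBW order by repeated use of Xi's formula (Proposition 2.3), organised as an induction on $N$ with a secondary induction on $|\weit L(\Bc,\Bh)|$, in which the first generator $\b_1=\a_{i_1}$ is handled separately. Every operation along the way is a multiplication of divided powers of one root vector (introducing a $q$-binomial coefficient, hence an element of $\BA$), a concatenation of two PBW monomials, or an instance of Proposition 2.3 (whose structure constants lie in $\BA$), so the expansion exists with coefficients in $\BA$, and termination is built into the induction. Assertion (ii) is visible step by step: applying Proposition 2.3 to $F_{\b_p}^{(s)}F_{\b_{p'}}^{(t)}$ keeps the leading term $F_{\b_{p'}}^{(t)}F_{\b_p}^{(s)}$ on the same pair of indices and produces corrections supported in $[p',p]$, so no index outside the set occurring in $L(\Bc,\Bh)L(\Bc',\Bh)$, namely $s(\Bc)\cup s(\Bc')=[a,b]$ with $a=\min\{s,p'\}$ and $b=\max\{t,p\}$, is ever created; I will carry this support bound alongside the triangularity bound in every inductive statement.

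For the reduction, let $e_k\in\NN^N$ be the $k$th coordinate vector, write $L(\Bc,\Bh)=f_{i_1}^{(c_1)}L(\wh\Bc,\Bh)$ with $\wh\Bc=\Bc-c_1e_1$ (so $1\notin s(\wh\Bc)$), and similarly $L(\Bc',\Bh)=f_{i_1}^{(c'_1)}L(\wh{\Bc'},\Bh)$. Using $f_{i_1}^{(a)}L(\Bd,\Bh)=(\text{scalar in }\BA)\,L(\Bd+ae_1,\Bh)$ and the fact that $\Bd\mapsto\Bd+ae_1$ is order-preserving for the order of 1.5, the computation of $L(\Bc,\Bh)L(\Bc',\Bh)=f_{i_1}^{(c_1)}\bigl(L(\wh\Bc,\Bh)f_{i_1}^{(c'_1)}\bigr)L(\wh{\Bc'},\Bh)$ reduces to: (a) the \emph{basic case} — computing $L(\Ba,\Bh)f_{i_1}^{(m)}$ for $1\notin s(\Ba)$ and showing it equals $\sum_{\Bd\ge\Ba}(\cdots)L(\Bd,\Bh)$ with $s(\Bd)\subset[1,\max s(\Ba)]$; followed by (b) right multiplication by $L(\wh{\Bc'},\Bh)$, which after peeling the $f_{i_1}$-power off each PBW monomial becomes products $L(\Bb,\Bh)L(\Bb',\Bh)$ with $1\notin s(\Bb),s(\Bb')$. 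For (b), $T_{i_1}$ identifies $\langle F_{\b_2},\dots,F_{\b_N}\rangle$ with the corresponding algebra for the reduced sequence $(i_2,\dots,i_N)$ of $s_{i_1}w_0$, compatibly with PBW bases and their orders; since the formulas of Section 2 and the statement itself make sense for any reduced expression, (b) is the same statement for a shorter reduced word and is available by the induction on $N$.

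For the basic case I induct on $|\weit L(\Ba,\Bh)|$; the case $\Ba=0$ is trivial. Let $t=\max s(\Ba)>1$ and $\Ba=\wt\Ba+a_te_t$, so $L(\Ba,\Bh)f_{i_1}^{(m)}=L(\wt\Ba,\Bh)F_{\b_t}^{(a_t)}F_{\b_1}^{(m)}$; apply Proposition 2.3 to $F_{\b_t}^{(a_t)}F_{\b_1}^{(m)}$. The leading term is a power of $q$ times $\bigl(L(\wt\Ba,\Bh)F_{\b_1}^{(m)}\bigr)F_{\b_t}^{(a_t)}$: the inductive basic case gives $L(\wt\Ba,\Bh)F_{\b_1}^{(m)}=\sum_{\Be\ge\wt\Ba}(\cdots)L(\Be,\Bh)$ with $s(\Be)\subset[1,\max s(\wt\Ba)]\subset[1,t-1]$, so $L(\Be,\Bh)F_{\b_t}^{(a_t)}=L(\Be+a_te_t,\Bh)$ is a concatenation and $\Be+a_te_t\ge\wt\Ba+a_te_t=\Ba$. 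Each correction term is $c(\Bn)\,L(\wt\Ba,\Bh)L(\Bn,\Bh)$ with $s(\Bn)\subset[1,t]$, and since $|\weit L(\wt\Ba,\Bh)|<|\weit L(\Ba,\Bh)|$ the general statement applies inductively, giving $\sum_{\Bd\ge\wt\Ba}(\cdots)L(\Bd,\Bh)$ with $s(\Bd)\subset[1,t]$.

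The crux is to upgrade the bound $\Bd\ge\wt\Ba$ on these correction monomials to $\Bd\ge\Ba$ — not automatic, since $\wt\Ba<\Ba$. This is forced by weights: the structure constant $c(\Bn)$ of Proposition 2.3 vanishes unless $\sum_{i\in[1,t]}n_i\b_i=a_t\b_t+m\b_1$, whence $\sum_i d_i\b_i=\sum_i\wt a_i\b_i+\sum_i n_i\b_i=\sum_i a_i\b_i+m\b_1$. If $\Bd$ agreed with $\Ba$ in all coordinates $<t$, then, as $s(\Bd)\subset[1,t]$, comparing $\b_t$-parts would give $(d_t-a_t)\b_t=m\b_1$ with $m\ge1$, impossible since $\b_1\ne\b_t$ are positive roots and hence not proportional. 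So $\Bd$ differs from $\Ba$ somewhere in $[1,t-1]$; as $\wt\Ba$ agrees with $\Ba$ there and $\Bd\ge\wt\Ba$, the first disagreement is below $t$ with $\Bd$ larger, so $\Bd>\Ba$. This closes the basic case and the induction: at each level — value of $N$, then of $|\weit L(\Ba,\Bh)|$ — one first proves the basic case (invoking only strictly smaller levels of both the basic case and the general statement, plus the case $N-1$) and then the general statement (invoking the basic case at the current level or below, and the case $N-1$). I expect the organisation of these nested inductions, and the verification that correction terms respect both the support and the triangularity bounds, to be the most delicate part of the write-up; the weight identity just used is the conceptual core that makes the correction terms behave.
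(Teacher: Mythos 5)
Your proposal is correct, but it follows a genuinely different route from the paper's. The paper runs a single induction on $|\weit L(\Bc,\Bh)|+|\weit L(\Bc',\Bh)|$ and splits into two cases — (a) $L(\Bc,\Bh)$ has at least two distinct root-vector factors, so the first factor $F_{\b_s}^{(c_s)}$ is peeled off and the remainder is handled inductively, then $F_{\b_s}^{(c_s)}$ is pushed through the resulting PBW terms; (b) $L(\Bc,\Bh)$ is a single $F_{\b_t}^{(c_t)}$, where Proposition 2.3 is applied directly and the result fed back into case (a). Your argument instead always extracts the \emph{first generator} $f_{i_1}=F_{\b_1}$ (possibly with trivial exponent) from both factors, sets up a double induction on the length $N$ of the reduced word and on the weight of the left factor, isolates the auxiliary lemma ``$L(\Ba,\Bh)f_{i_1}^{(m)}$ with $1\notin s(\Ba)$'' as a basic case, and discharges the residual product of $\{F_{\b_2},\dots,F_{\b_N}\}$-monomials by transporting it via $T_{i_1}$ to the shorter reduced word $(i_2,\dots,i_N)$. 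This braid-group reduction is the main structural novelty; it requires the observation (which you make) that the whole statement and the commutation formulas of Section 2 are sensible and provable for reduced words of arbitrary $w\in W$, not only $w_0$ — a harmless generalization given that $\SX_\Bh$, the order on $\NN^N$, and Propositions 2.2–2.3 all live inside the quantum Schubert cell $\BU^-[w]$. The dependency graph of your nested induction (basic case at level $(N,w)$ quotes the basic case and the general statement at $(N,w')$ with $w'<w$; the general statement at $(N,w)$ quotes the basic case at $(N,\le w)$ and the general statement at $(N-1,\cdot)$) is acyclic, so the induction closes. The weight argument you single out as the ``crux'' — ruling out $d_i=a_i$ for $i<t$ by comparing $\sum_id_i\b_i$ with $\sum_ia_i\b_i+m\b_1$ and using that $\b_1$ and $\b_t$ are non-proportional positive roots — is correct and is where the triangularity bound $\Bd\ge\Ba$, rather than merely $\Bd\ge\wt\Ba$, is secured. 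What each approach buys: the paper's is more self-contained (no appeal to the braid-group identification of subalgebras) at the cost of a two-case analysis whose inductive bookkeeping is somewhat implicit; yours is conceptually cleaner and makes all inductive parameters explicit, at the cost of formulating the statement for general $w$ and of a heavier nested-induction scaffold, which you rightly flag as the delicate part to write out.
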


\begin{proof}
We prove the proposition by induction on $|\weit(L(\Bc,\Bh))| + |\weit(L(\Bc',\Bh)|$.
We write $L(\Bc,\Bh) = F_{\b_s}^{(c_s)}\cdots F_{\b_t}^{(c_t)}$,
$L(\Bc',\Bh) = F_{\b_{p'}}^{(c'_{p'})}\cdots F_{\b_p}^{(c'_p)}$, where
$c_s, c_t,c'_{p'}, c'_p$ are all non-zero.
In the case where $s = t$ and $p' = p$, the proposition certainly holds by
Proposition 2.3.
\par
(a) \ First assume that $s< t$.  Then we have
\begin{align*}
\tag{2.5.2}  
  L(\Bc, \Bh)L(\Bc',\Bh) 
         &= F_{\b_s}^{(c_s)}
           (F_{\b_{s+1}}^{(c_{s+1})}\cdots F_{\b_t}^{(c_t)})L(\Bc', \Bh).
\end{align*}
Let $\Bd = (0, \dots, 0, c_{s+1}, \dots, c_t, 0, \dots, 0) \in \NN^N$, and 
write $F_{\b_{s+1}}^{(c_{s+1})}\cdots F_{\b_t}^{(c_t)} = L(\Bd, \Bh)$.
By induction, $L(\Bd, \Bh)L(\Bc',\Bh)$
is a linear combination of $L(\Be, \Bh)$ such that $\Be \ge \Bd$.
We can write $L(\Be,\Bh) = L'L''$ as a product of PBW bases,
where $L' = F_{\b_1}^{(e_1)}\cdots F_{\b_s}^{(e_s)}$,
and $L'' = F_{\b_{s+1}}^{(e_{s+1})}\cdots F_{\b_N}^{(e_N)}$.
Since $\Be \ge \Bd$, either $e_i \ne 0$ for some $i \le s$, 
or $e_1 = \dots = e_s = 0$ and
$(e_{s+1}, \dots, e_N) \ge (c_{s+1}, \dots, c_t, 0, \dots,0)$. 
In the latter case, $L(\Be,\Bh) = L''$. 
\par
Now, by applying (2.5.1) for $F_{\b_s}^{(c_s)}$ and $L'$,
$F_{\b_s}^{(c_s)}L'$ is a linear combination of
$F_{\b_1}^{(e'_1)}\cdots F_{\b_s}^{((e'_s)}$ such that
$(e'_1, \dots, e'_s) \ge (0, \dots, 0, c_s)$, and so,
$F_{\b_s}^{(c_s)}L(\Be,\Bh)$ is a linear combination of
$(F_{\b_1}^{(e'_1)}\cdots F_{\b_s}^{(e'_s)})L''$, which coincides with a PBW basis
$L(\Bc'',\Bh)$.  If $(e_1', \dots, e_s') > (0,\dots, 0, c_s)$, we have $\Bc''> \Bc$.
Thus we may assume that $(e_1', \dots, e_s') = (0, \dots, 0, c_s)$. 
In this case, $F_{\b_1}^{(e'_1)}\cdots F_{\b_s}^{(e'_s)} = F_{\b_s}^{(c_s)}$,
and by comparing the weights, $\weit L' = 0$, hence $e_1 = \dots = e_s = 0$.
Thus $(e_{s+1}, \dots, e_N) \ge (c_{s+1}, \dots, c_t, 0, \dots, 0)$.
We have
\begin{equation*}
L(\Bc'',\Bh) =
F_{\b_s}^{(c_s)}L'' = F_{\b_s}^{(c_s)}F_{\b_{s+1}}^{(e_{s+1})}\cdots F_{\b_N}^{(e_N)}.
\end{equation*}
This implies that $\Bc'' \ge \Bc$. 
Hence $L(\Bc,\Bh)L(\Bc',\Bh)$ is a linear combination of $L(\Bc'',\Bh)$ such that
$\Bc'' \ge \Bc$.
\par
(b) \ Next assume that $s = t$ and $p' < p$. Then $\Bc = (0, \dots, 0, c_t, 0, \dots, 0)$.
We have
\begin{align*}
\tag{2.5.3}  
  L(\Bc,\Bh)L(\Bc',\Bh) &= (F_{\b_t}^{(c_t)}F_{\b_{p'}}^{(c'_{p'})})
                             (F_{\b_{p'+1}}^{(c'_{p'+1})}\cdots F_{\b_p}^{(c'_p)})  \\
                        &= \Bigl(\sum_{\Be \ge \Bc}a_{\Be}L(\Be, \Bh)\Bigr)
                              (F_{\b_{p'+1}}^{(c'_{p'+1})}\cdots F_{\b_p}^{(c'_p)})
\end{align*}
by Proposition 2.3.
Then $L(\Be, \Bh)$ is of the form $F_{\b_k}^{(e_k)}\cdots F_{\b_{k'}}^{(e_{k'})}$.
If $k < k'$, the discussion in (a) for the case $s < t$ can be applied, and
for $L = F_{p'+1}^{(c'_{p'+1})}\cdots F_p^{(c'_p)}$,
$L(\Be, \Bh)L$ is a linear combination of
$L(\Bc'',\Bh)$ such that $\Bc'' \ge \Be$. 
Thus we may assume that $L(\Be,\Bh) = F_{\b_k}^{(e_k)}$ for some $k$, and
consider $F_{\b_k}^{(e_k)}L$, where $L$ is as above.
If $p'+1 < p$, by a similar discussion as above, using (2.5.3), one can eliminate
the term $F_{\b_{p'+1}}^{(c'_{p'+1})}$ from $L$. Hence by repeating this procedure, finally
the computation is reduced to the case where $L(\Be,\Bh) = F_{\b_k}^{(e_k)}$ and 
$L = F_{\b_{p'+1}}^{(c'_{p'+1})}\cdots F_{\b_p}^{(c'_p)} = F_{\b_p}^{(c'_p)}$. 
In this case, Proposition 2.3 can be applied.  Hence in any case, $L(\Be,\Bh)L$ is
a linear combination of $L(\Bc'',\Bh)$ such that $\Bc''\ge \Be$. 
Since $\Be \ge \Bc$, we have $\Bc'' \ge \Bc$.  Thus (i) hods for the case (b),
and (i) is proved. 
(ii) is clear from Proposition 2.3 and from the above discussion. 
\end{proof}  

\par\bigskip
\section{ Foldings of quantum groups }

\para{3.1.}
Let $X = (I, (\ ,\ ))$ be a Cartan datum of finite type as in 1.1.
We assume that $X$ is symmetric.
A permutation $\s : I \to I$ is called an admissible automorphism on $X$
if it satisfies the property that
$(\a_i, \a_{i'}) = (\a_{\s(i)}, \a_{\s(i')})$ for any $i,i' \in I$,
and that $(\a_i, \a_{i'}) = 0$ if $i \ne i'$ belong to the same $\s$-orbit in $I$.
Assume that $\s$ is admissible. 
We denote by $\Bn$ the order of $\s: I \to I$. 
Let $J$ be the set of
$\s$-orbits in $I$. For each $j \in J$, set $\a_j = \sum_{i \in j}\a_i$,
and consider the subspace $\bigoplus_{j \in J}\QQ \a_j$ of $E$ with basis $\a_j$.
We denote by $|j|$ the size of  the orbit $j$ in $I$.
The restriction of the form $(\ ,\ )$ on $\bigoplus_{j \in J}\QQ\a_j$ is given by

\begin{equation*}
(\a_j, \a_{j'}) = \begin{cases}
    (\a_i,\a_i)|j|, \quad (i \in j) &\quad\text{ if } j = j', \\
    \sum_{i \in j, i' \in j'}(\a_i, \a_{i'})   &\quad\text{ if } j \ne j'. 
                  \end{cases}
\end{equation*}
Then $\ul X = (J, (\ ,\ ))$ turns out to be a Cartan datum, which is called the Cartan datum
induced from $(X, \s)$.
Now  $\s$ acts naturally on the root lattice $Q$,
and the set $Q^{\s}$ of $\s$-fixed elements in $Q$
is identified with the root lattice $\ul Q$ of $\ul X$.
We have $Q^{\s}_+ = \sum_{j \in J}\NN \a_j$, which is identified with $\ul Q_+$.  
Let $\ul\vD^+$ be the set of positive roots associated to $\ul X$.
Then $\a_j \in Q_+^{\s}$ corresponds to a simple root in $\ul\vD^+$.
\par
The admissible automorphism $\s : I \to I$  induces 
an algebra automorphism on $\BU_q^-$ by $f_i \mapsto f_{\s(i)}$ for each $i \in I$,
which we also denote by $\s$.  The action of $\s$ leaves ${}_{\BA}\BU_q^-$ invariant, 
and we denote by ${}_{\BA}\BU_q^{-,\s}$ the fixed point subalgebra of ${}_{\BA}\BU_q^-$. 
\par
Let $\ul{\BU}_q^- = \BU_q^-(\ul X)$
be the negative half of the quantum group $\ul\BU_q$ associated to $\ul X$.
Thus $\ul\BU_q^-$ is an associative $\QQ(q)$-algebra, with generators $f_j \ (j \in J)$
and $q$-Serre relations as in 1.2.  The integral form ${}_{\BA}\ul\BU_q^-$ of $\ul\BU_q^-$
is the $\BA$-subalgebra generated by $f_j^{(n)}$ \ ($j \in J, n \in \NN$). 

\para{3.2.}
Let $\s: I \to I$ be as in 3.1. 
Hereafter, we assume that $X$ is irreducible.
Then $\Bn = 2$ or 3, which is a prime number $p$. Let $\FF = \ZZ/p\ZZ$
be the finite field of $p$ elements, and set $\BA' = \FF[q,q\iv] = \BA/p \BA$.
We consider the $\BA'$-algebra

\begin{equation*}
  {}_{\BA'}\BU_q^{-,\s} = \BA' \otimes_{\BA}{}_{\BA}\BU_q^{-,\s}
          \simeq {}_{\BA}\BU_q^{-,\s}/p({}_{\BA}\BU_q^{-,\s}).
\end{equation*}

Let $\BJ$ be an $\BA'$-submodule of $_{\BA'}\BU_q^{-,\s}$ generated by
$\sum_{0 \le i < p}\s^i(x)$. Then $\BJ$ is a two-sided ideal of
$_{\BA'}\BU_q^{-\s}$.  We define an $\BA'$-algebra $\BV_q$ as the quotient algebra
of ${}_{\BA'}\BU_q^{-,\s}$,

\begin{equation*}
\tag{3.2.1}  
\BV_q = {}_{\BA'}\BU_q^{-,\s}/\BJ.
\end{equation*}
Let $\pi : {}_{\BA'}\BU_q^{-,\s} \to \BV_q$ be the natural projection. 

\par
For each $j \in J$, and $a \in \BN$, set $\wt f^{(a)}_j = \prod_{i \in j}f_i^{(a)}$.
Since $\s$ is admissible, $f_i^{(a)}$ and $f_{i'}^{(a)}$ commute each other for $i,i' \in j$.
Hence $\wt f_j^{(a)}$ does not depend on the order of the product, and we have
$\wt f_j^{(a)} \in {}_{\BA}\BU_q^{-,\s}$.  We denote by the same symbol the image of
$\wt f_j^{(a)}$ on ${}_{\BA'}\BU_q^{-,\s}$. Thus $\pi(\wt f_j^{(a)}) \in \BV_q$ can be defined. 
In the case where $a = 1$, we set $\wt f_j^{(1)} = \wt f_j = \prod_{i \in j}f_j$.
\par
We define an $\BA'$-algebra ${}_{\BA'}\ul\BU_q^-$ by
${}_{\BA'}\ul\BU_q^- = \BA'\otimes_{\BA}{}_{\BA}\ul\BU_q^-$. 
\par
The following result holds.

\begin{thm}[{[SZ1]}] 
The assignment $f_j^{(a)} \to \pi(\wt f_j^{(a)})$ \ ($j \in J$)
gives an $\BA'$-algebra isomorphism $\Phi : {}_{\BA'}\ul\BU_q^- \simeq \BV_q$.
\end{thm}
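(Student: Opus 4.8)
The plan is to build $\Phi$ from a presentation of ${}_{\BA'}\ul\BU_q^-$ by generators and relations, to check that the elements $\pi(\wt f_j^{(a)})$ satisfy these relations, and then to upgrade the resulting $\BA'$-algebra homomorphism to an isomorphism by a rank count in each weight space. Recall that ${}_{\BA'}\ul\BU_q^-$ is presented by generators $f_j^{(a)}$ $(j\in J,\ a\ge 0)$, the divided-power relations $f_j^{(m)}f_j^{(n)}=\binom{m+n}{n}_{q_j}f_j^{(m+n)}$ (with $q_j=q^{(\a_j,\a_j)/2}$), and the $q$-Serre relations for $j\ne j'$. Since $\s$ is admissible, $(\a_i,\a_{i'})=0$, hence $a_{ii'}=0$, for distinct $i,i'$ in a common orbit $j$; so the $f_i$ $(i\in j)$ commute pairwise, $\wt f_j^{(a)}=\prod_{i\in j}f_i^{(a)}$ is order-independent and $\s$-fixed, and $\pi(\wt f_j^{(a)})\in\BV_q$ is well defined. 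For the divided-power relations, using $q_i=q$ ($X$ being symmetric) one has $\wt f_j^{(m)}\wt f_j^{(n)}=\prod_{i\in j}f_i^{(m)}f_i^{(n)}=\binom{m+n}{n}_q^{|j|}\wt f_j^{(m+n)}$ in ${}_{\BA}\BU_q^{-,\s}$; since $|j|\in\{1,p\}$ and, in $\BA'=\FF[q,q\iv]$,
\[
 [a]_{q^{p}}=\bigl((q^{a}-q^{-a})/(q-q\iv)\bigr)^{p}=[a]_q^{p}
\]
(the Frobenius $x\mapsto x^{p}$ being additive, applied to numerator and denominator), we get $\binom{m+n}{n}_q^{|j|}=\binom{m+n}{n}_{q^{|j|}}$ in $\BA'$ with $q^{|j|}=q_j$. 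Thus the divided-power relations already hold in ${}_{\BA'}\BU_q^{-,\s}$, before the quotient by $\BJ$.

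The substance lies in the $q$-Serre relations, and it is here that $\BJ$ is needed; I expect this to be the main obstacle. Set
\[
 S_{jj'}=\sum_{k+k'=1-\ul a_{jj'}}(-1)^{k}\,\wt f_j^{(k)}\,\wt f_{j'}\,\wt f_j^{(k')}\ \in\ {}_{\BA}\BU_q^{-,\s},
\]
where $\ul a_{jj'}$ denotes the Cartan integer of $\ul X$; one must show $S_{jj'}\in\BJ$ after reduction mod $p$. As the $f_i$ obey only the unfolded $q$-Serre relations, $S_{jj'}$ is in general nonzero in ${}_{\BA}\BU_q^-$, so this is a genuine claim. The reduction is to the rank-two Levi spanned by $j\cup j'$, followed by a pass through the finitely many local configurations in an irreducible folding: the non-adjacent case $\ul a_{jj'}=0$, where $\wt f_j$ and $\wt f_{j'}$ commute; the case where $j,j'$ are both singletons, where the unfolded and folded Serre relations coincide so $S_{jj'}=0$; the $A_2\times A_2$-fold (two orbits of size $2$, each meeting both components once), where a direct manipulation of the unfolded relations yields $S_{jj'}=x+\s(x)\in\BJ$ for an explicit $x$; and the remaining cases $A_3\to B_2$ (an orbit of size $2$ next to a singleton, $p=2$) and $D_4\to G_2$ (an orbit of size $3$ next to a singleton, $p=3$), where one computes $S_{jj'}$ from the unfolded $q$-Serre relations -- if convenient using the commutation formulas of Section 2 -- and checks $S_{jj'}\in\BJ$ mod $p$, equivalently that in the canonical-basis expansion of $S_{jj'}$ every $\s$-fixed basis element occurs with coefficient in $p\BA$ (the $\s$-invariance of $S_{jj'}$ already forcing the coefficients along a $\s$-orbit to agree). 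These two computations are the laborious step; one may instead invoke Lusztig's analysis of foldings in \cite{L-book}.

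Granting well-definedness, the isomorphism follows from a description of $\BV_q$. Since $\s$ permutes the canonical basis $\bB$ of ${}_{\BA}\BU_q^-$ (Lusztig's categorification with automorphisms, recalled in the Introduction), ${}_{\BA}\BU_q^{-,\s}$ has $\BA$-basis the $\s$-orbit sums $n_O=\sum_{b\in O}b$, so ${}_{\BA'}\BU_q^{-,\s}=\bigoplus_O\BA'\,n_O$. As $p$ is prime, for $x=\sum_b c_b\,b$ one has $\sum_{0\le i<p}\s^i(x)=\sum_{O:\,|O|=p}\bigl(\sum_{b\in O}c_b\bigr)n_O$, whence $\BJ=\bigoplus_{O:\,|O|=p}\BA'\,n_O$ and
\[
 \BV_q\ \simeq\ \bigoplus_{b\in\bB^{\s}}\BA'\,\pi(b),
\]
a free $\BA'$-module whose rank in weight $\g$ is $|\bB_\g^{\s}|$. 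By the folding theory (again recalled in the Introduction), $|\bB_\g^{\s}|=|\ul\bB_\g|$, which is the rank of $({}_{\BA'}\ul\BU_q^-)_\g$.

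Finally, I would show that $\Phi$ sends the PBW basis $\SX_{\ul\Bh}$ of ${}_{\BA'}\ul\BU_q^-$ to a basis of $\BV_q$. Pick a reduced sequence $\Bh$ of $w_0$ compatible with one $\ul\Bh$ for $\ul X$, as in the folding-compatibility of PBW bases recalled in the Introduction; then $\s$ permutes $\SX_\Bh$ with $\SX_\Bh^{\s}\leftrightarrow\SX_{\ul\Bh}$, and the braid-group compatibility of the folding together with the commutation formulas of Section 2 gives
\[
 \Phi\bigl(L(\ul\Bc,\ul\Bh)\bigr)=\pi\bigl(L(\Bc,\Bh)\bigr)+\sum_{\Bc'>\Bc}a_{\Bc'}\,\pi\bigl(L(\Bc',\Bh)\bigr),
\]
the sum over $\s$-fixed $\Bc'$ in the total order defined in 1.5, where $\ul\Bc\mapsto\Bc$ is the index bijection underlying $\SX_{\ul\Bh}\leftrightarrow\SX_\Bh^{\s}$. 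By the triangularity (1.5.1), applied $\s$-equivariantly, the $\pi(L(\Bc,\Bh))$ with $\Bc$ $\s$-fixed form an $\BA'$-basis of $\BV_q$, indexed by the same set as $\SX_{\ul\Bh}$ by the rank equality above; hence the $\Phi(L(\ul\Bc,\ul\Bh))$ are triangular with respect to it, so they too form an $\BA'$-basis of $\BV_q$. Therefore $\Phi$ carries a basis to a basis, hence is an isomorphism.
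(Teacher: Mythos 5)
Your route is genuinely different from the one the paper attributes to [SZ1]. Remark~3.4 emphasizes that [SZ1] works entirely with PBW bases and does \emph{not} assume the existence of canonical bases, whereas your well-definedness-plus-rank-count argument invokes the $\s$-action on $\bB$ and the bijection $\bB^\s\leftrightarrow\ul\bB$ from Lusztig's categorification with automorphisms -- closer in spirit to the later proof in [MSZ1]. In fact the canonical-basis detour is redundant: since $\s$ permutes $\SX_\Bh$ with $\SX_\Bh^\s\leftrightarrow\SX_{\ul\Bh}$ (parts (i),(ii) of Proposition~3.7, which concern $\SX_\Bh$ alone and do not involve $\Phi$), your orbit-sum argument run with $\SX_\Bh$ in place of $\bB$ shows directly that $\BJ$ is spanned by PBW orbit sums of non-fixed elements, that $\BV_q$ has $\BA'$-basis $\{\pi(L(\Bc,\Bh)):\Bc\in\NN^{N,\s}\}$, and that the weight-space ranks of $\BV_q$ and ${}_{\BA'}\ul\BU_q^-$ agree via the explicit bijection $\NN^{\ul N}\simeq\NN^{N,\s}$ of 3.6. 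So the same conclusions follow without any appeal to $\bB$.

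The genuine gap is in your last paragraph. You need $\Phi(L(\ul\Bc,\ul\Bh))$ to be unitriangular in $\pi(L(\Bc,\Bh))$, and you attribute this to ``braid-group compatibility of the folding together with the commutation formulas of Section~2.'' But Propositions~2.2, 2.3, 2.5 are relations \emph{inside} one algebra at a time; they say nothing about how $\Phi$, defined only on the divided powers $f_j^{(a)}$, interacts with the braid-twisted root vectors $F_{\ul\b_k}^{(c)}$. The intertwining $\Phi(L(\ul\Bc,\ul\Bh))=\pi(L(\Bc,\Bh))$ is precisely Proposition~3.7(iii), which the paper says is established \emph{in the course of the proof of the theorem} in [SZ1] -- it is the hard part, not something one can quote. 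A repair inside your framework that avoids root vectors entirely: since $\Phi$ is an algebra map sending $f_j^{(a)}\mapsto\pi(\wt f_j^{(a)})$, one gets $\Phi(m(\ul\Bc,\ul\Bh))=\pi(\wt m(\Bc,\Bh))$ for the monomial bases of Section~4 for free, and the unitriangularity (4.17.3) of $\wt m(\Bc,\Bh)$ over $\SX_\Bh$ (which follows from Lemmas~4.5, 4.13 and Proposition~2.5 without any circular appeal to $\Phi$) then gives surjectivity of $\Phi$ on each weight space; the rank equality finishes it. Finally, you are right that the $q$-Serre relations are the laborious step, but they cannot be left as a citation; the rank-two case checks (in particular $A_3\to B_2$ and $D_4\to G_2$ modulo $p$) are a genuine part of the proof.
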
  

\par\noindent
\remark{3.4.}
The statement as in Theorem 3.3 was proved by \cite[Thm. 0.4]{SZ1} in the case of
finite type, and was generalized in \cite{SZ2} to the affine case, by using
the PBW-bases of $\BU_q^-$. Finally, it was proved in \cite{MSZ1} for the
case of Kac-Moody type, in general, by making use of canonical bases
in $\BU_q^-$.  In \cite{SZ1},\cite{SZ2}, the existence of canonical bases is
not assumed. 

\para{3.5.}
Let $W$ be the Weyl group generated by $\{ s_i \mid i \in I\}$ associated to $X$,
and $\ul{W}$ the Weyl group generated by $\{ s_j \mid j \in J\}$ associated to $\ul{X}$.
Let $\s : W \to W$ be the automorphism of $W$ defined by $s_i \mapsto s_{\s(i)}$, and
$W^{\s}$ the fixed point subgroup of $W$ by $\s$.
For $j \in J$, set $w_j = \prod_{i \in j}s_i$. Since $\s$ is admissible, $w_j$
is independent from the order of the product, and so $w_j \in W^{\s}$. 
It is known that the correspondence $s_j \mapsto w_j$ gives an isomorphism of groups
$\ul W \isom W^{\s}$.
Let $w_0$ be the longest element in $W$ with $l(w_0) = N$, and $\ul{w}_0$ the longest
element of $\ul{W}$ with $l(\ul{w}_0) = \ul N$. 
Let $\ul{w_0} = s_{j_1}\cdots s_{j_{\ul N}}$ be a reduced expression of $\ul{w}_0$, then
we have $w_0 = w_{j_1}\cdots w_{j_{\ul N}}$ and $\sum_{1 \le k \le \ul N}l(w_{j_k}) = N$,
where $w_{j_k} = \prod_{i \in j_k}s_i$. 
It follows that
\begin{equation*}
\tag{3.5.1}  
w_0 = \biggl(\prod_{k_1 \in j_1}s_{k_1}\biggr)\cdots
          \biggl(\prod_{k_{\ul N} \in j_{\ul N}}s_{k_{\ul N}}\biggr) = s_{i_1}\cdots s_{i_N}
\end{equation*}
gives a reduced expression of $w_0$.
For a given reduced sequence $\ul \Bh = (j_1, \dots, j_{\ul N})$ of $\ul{w}_0$,
we define a reduced sequence $\Bh = (i_1, \dots, i_N)$ of $w_0$ by (3.5.1), i.e.,
\begin{equation*}
\tag{3.5.2}  
  \Bh = (\underbrace{i_1, \dots, i_{|j_1|}}_{j_1},
         \underbrace{i_{|j_1|+1}, \dots, i_{|j_1|+|j_2|}}_{j_2},
            \dots, \underbrace{i_{|j_1| + \cdots + |j_{\ul N-1}|+1}, \dots, i_N}_{j_{\ul N}}).
\end{equation*}  
(Although the expression of $w_j$ is not unique, we ignore the difference of
mutually commuting factors.)

\para{3.6.}
Take reduced sequences $\Bh$ of $w_0$, and $\ul\Bh$ of $\ul{w}_0$.  
Then $\ul\Bh$ gives a total order of $\ul\vD^+$ as
$\ul\vD^+ = \{ \ul\b_1, \dots, \ul\b_{\ul N} \}$.   Let
\begin{align*}
\SX_{\Bh} &= \{ L(\Bc, \Bh) \mid \Bc \in \NN^N\}, \\
\SX_{\ul\Bh} &= \{ L(\ul\Bc, \ul\Bh) \mid \ul\Bc \in \NN^{\ul N} \}
\end{align*}
be the PBW-basis of $\BU_q^-$
associated to $\Bh$, and the PBW-basis of $\ul\BU_q^-$ associated to $\ul\Bh$.
Here we have

\begin{align*}
L(\Bc,\Bh) &= F_{\b_1}^{(c_1)}F_{\b_2}^{(c_2)}\cdots F_{\b_N}^{(c_N)}, \\
L(\ul\Bc, \ul\Bh) &= F_{\ul\b_1}^{(\ul c_1)}F_{\ul\b_2}^{(\ul c_2)}
\cdots F_{\ul\b_{\ul N}}^{(\ul c_{\ul N})},
\end{align*}
where $F_{\ul\b_k}^{(\ul c_k)} = T_{j_1}\cdots T_{j_{k-1}}(f_{j_k}^{(\ul c_k)})$
and $T_j$ ($j \in J)$ is a braid group action on $\ul\BU_q$. 

Now choose $\Bh, \ul\Bh$ as in 3.5.  Following the expression in (3.5.2),
we write $\Bc = (c_1, \dots, c_N) \in \NN^N$ as $\Bc = (\Bc^{(1)}, \dots, \Bc^{(\ul N)})$,
where $\Bc^{(k)} \in \NN^{|j_k|}$, namely,
\begin{equation*}
  \Bc = (\underbrace{c_1, \dots, c_{|j_1|}}_{\Bc^{(1)}},
          \underbrace{c_{|j_1|+1}, \dots, c_{|j_1| + |j_2|}}_{\Bc^{(2)}}, 
          \dots, \underbrace{c_{|j_1| + \cdots + |j_{\ul N-1}|+1},\dots, c_N}_{\Bc^{(\ul N)}}).
\end{equation*}  
Corresponding to the action of $\s$ on the orbit
$j_k \subset I$,
we define an action of $\s$ on $\Bc^{(k)}$ as a permutation of coordinates,
thus the action of $\s$ on $\NN^N$
can be defined. Let $\NN^{N,\s}$ be the set of $\s$-fixed elements in $\NN^N$.
For $\ul\Bc = (\ul c_1, \dots, \ul c_{\ul N}) \in \BN^{\ul N}$, define
$\Bc = (\Bc^{(1)}, \dots, \Bc^{(\ul N)}) \in \NN^N$ by the condition that
$\Bc^{(k)} = (\ul c_k, \dots, \ul c_k) \in \NN^{|j_k|}$ for each $k$.  Then
$\Bc \in \NN^{N,\s}$, and the assignment $\ul\Bc \mapsto \Bc$ gives a bijection
$\NN^{\ul N} \isom \NN^{N,\s}$. 

\par
The following result was proved in \cite[Thm. 1.14]{SZ1} in the course of the proof
of Theorem 0.4 in \cite{SZ1}.

\begin{prop}  
Under the setup in 3.6, we have
\begin{enumerate}
\item \ $\s$ acts on $\SX_{\Bh}$ as a permutation,
$\s : L(\Bc,\Bh) \mapsto L(\s(\Bc), \Bh)$.
$L(\Bc, \Bh)$ is $\s$-invariant if and only if $\Bc \in \NN^{N,\s}$.
\item \ Under the bijection $\NN^{\ul N} \isom \NN^{N,\s}, \ul\Bc \mapsto \Bc$,
the assignment $L(\ul\Bc, \ul\Bh) \to L(\Bc, \Bh)$ gives a bijection
\begin{equation*}
\SX_{\ul\Bh} \isom \SX_{\Bh}^{\s},
\end{equation*}
where $\SX_{\Bh}^{\s}$ is the set of $\s$-fixed PBW bases in $\SX_{\Bh}$. 
\item \ The bijection in (ii) is compatible with the isomorphism $\Phi$ in
Theorem 3.3, namely on $\BV_q$, we have
\begin{equation*}  
\Phi(L(\ul\Bc, \ul\Bh)) = \pi(L(\Bc, \Bh))
\end{equation*}
\item \ Similar statements as in (i) $\sim$ (iii) also hold for 
  the canonical basis $\bB_{\Bh}$ on $\BU_q^-$ and $\bB_{\ul\Bh}$ on $\ul\BU_q^-$.  
\end{enumerate}  
\end{prop}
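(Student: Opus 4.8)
The plan is to follow \cite{SZ1}, where this is Theorem~1.14, proved in the course of establishing Theorem~3.3 above; the whole statement hinges on the interaction of $\s$ with Lusztig's braid operators $T_i$ and with the special block shape (3.5.1)--(3.5.2) of $\Bh$. For (i), the starting point is the relabeling identity $\s\circ T_i=T_{\s(i)}\circ\s$ on $\BU_q$, which holds because $\s$ merely permutes the Chevalley generators and $T_i$ is given by formulas equivariant under such a relabeling. Applying it repeatedly to $F_{\b_l}^{(n)}=T_{i_1}\cdots T_{i_{l-1}}(f_{i_l}^{(n)})$ gives $\s(F_{\b_l}^{(n)})=T_{\s(i_1)}\cdots T_{\s(i_{l-1})}(f_{\s(i_l)}^{(n)})$, i.e.\ the $l$-th root vector attached to the reduced sequence $\s(\Bh)$. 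Here the shape of $\Bh$ enters: since the simple reflections $s_i$ with $i$ in one orbit $j_k$ commute (admissibility forces $(\a_i,\a_{i'})=0$, hence $a_{ii'}=0$), the word $w_{j_k}=\prod_{i\in j_k}s_i$ is $\s$-stable, so each partial product $u_k=w_{j_1}\cdots w_{j_{k-1}}$ is $\s$-stable, and the $k$-th block of root vectors is $\{\,T_{u_k}(f_i^{(n)})\mid i\in j_k\,\}$, a commuting family (the $T_{u_k}$-image of the commuting family $\{f_i^{(n)}\}_{i\in j_k}$). Thus $\s$ just permutes these commuting factors inside each block according to $\s|_{j_k}$, giving $\s(L(\Bc,\Bh))=L(\s(\Bc),\Bh)$ and the $\s$-invariance criterion $\Bc\in\NN^{N,\s}$. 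Part (ii) is then immediate: compose the permutation action of (i) with the bijection $\NN^{\ul N}\isom\NN^{N,\s}$ described at the end of 3.6.

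For (iii) the decisive ingredient is the compatibility of $\Phi$ with braid actions. I would prove by induction on $k$ the identity
\begin{equation*}
\Phi\bigl(T_{j_1}\cdots T_{j_{k-1}}(f_{j_k}^{(\ul c_k)})\bigr)=\pi\bigl(T_{u_k}(\wt f^{(\ul c_k)}_{j_k})\bigr),
\end{equation*}
where $\wt f^{(\ul c_k)}_{j_k}=\prod_{i\in j_k}f_i^{(\ul c_k)}$ and $T_{u_k}=T_{i_1}\cdots T_{i_{m_k}}$ with $m_k=|j_1|+\cdots+|j_{k-1}|$. Granting this, one uses $\prod_{i\in j_k}T_{u_k}(f_i^{(\ul c_k)})=T_{u_k}(\wt f^{(\ul c_k)}_{j_k})$ (the block factors commute) and multiplies the $\ul N$ blocks together to obtain $\Phi(L(\ul\Bc,\ul\Bh))=\pi(L(\Bc,\Bh))$ for the matched pair $\ul\Bc\mapsto\Bc$. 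The inductive step rests on an identity relating $T_j$ on $\ul\BU_q$ to the commuting product $\prod_{i\in j}T_i$ on $\BU_q$, transported through $\Phi$. This is the main obstacle: $\Phi$ and $\pi$ are defined only between the negative halves, reduced mod $p$ and pushed to $\BV_q$, whereas the $T_j$ and $T_i$ act on the whole quantum groups, so one must either extend $\Phi$ and $\pi$ over enough of $\BU_q$ and $\ul\BU_q$ to make the identity meaningful, or verify it directly on the particular root-vector elements, tracking the $q$-integer denominators and signs modulo $p$. This verification on generators is precisely the technical heart of \cite{SZ1}.

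Finally, for (iv) I would characterize the canonical basis through the crystal lattice $\CL=\sum_{\Bc}\ZZ[q]\,L(\Bc,\Bh)$, which is independent of $\Bh$ and $\s$-stable. Since $\s$ commutes with the bar involution and, by (i), acts on $\CL/q\CL$ by permuting the images $L(\Bc,\Bh)\bmod q\CL$ via $\Bc\mapsto\s(\Bc)$, it permutes $\bB$, and from $b(\Bc,\Bh)\equiv L(\Bc,\Bh)\pmod{q\CL}$ (a restatement of (1.5.2)(ii)) one gets $\s(b(\Bc,\Bh))=b(\s(\Bc),\Bh)$ by uniqueness of the bar-invariant element in each coset. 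The analogues of (ii) and (iii) for $\bB$ then follow at once: the $\Phi$-compatibility reduces to (iii) together with the triangular relation (cf.\ (1.5.2)) between $\bB_{\ul\Bh}$ and $\SX_{\ul\Bh}$, using that $b(\Bc,\Bh)$ is $\s$-invariant hence descends to $\BV_q$. In summary, (i), (ii) and (iv) are essentially formal once the braid-action compatibility of $\Phi$ in (iii) is in place, and that compatibility is the step I expect to be genuinely delicate.
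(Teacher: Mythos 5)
The paper gives no proof of Proposition 3.7, instead citing \cite[Thm.~1.14]{SZ1}, and your outline reconstructs precisely the strategy of that reference: (i) via $\s\circ T_i=T_{\s(i)}\circ\s$ together with the commuting block structure of $\Bh$ (each block being the $T_{u_k}$-image of a commuting family of divided powers, so that braid operators for indices within an orbit fix the other generators in that orbit), (iv) via the $q\CL$-characterization of the canonical basis and the fact that $\pi$ kills orbit sums of non-$\s$-stable PBW monomials, and (iii) deferred to the block-by-block induction whose mod-$p$ verification on root vectors you correctly identify as the technical crux of \cite{SZ1}. Your argument is sound and consistent with the cited source.
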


\para{3.8.}
We have the following refinement of Proposition 3.7 (iii).
From the relations between $\Bh$ and $\ul\Bh$, we have a partition
\begin{equation*}
\tag{3.8.1}
\vD^+ = \{ \underbrace{\b_1, \dots, \b_{|j_1|}}_{\ul\b_1\text{-part}},
             \underbrace{\b_{|j_1|+1}, \dots, \b_{|j_1| + |j_2|}}_{\ul\b_2\text{-part}},
             \dots, \underbrace{\b_{|j_1|+ \cdots + |j_{\ul N-1}| +1},
                          \dots, \b_N}_{\ul\b_{\ul N}\text{-part}}\}.
\end{equation*}  

Write the $\ul\b_k$-part of $\vD^+$ as $\b_s, \dots, \b_t$.
Then $F_{\b_s}^{(c_s)}, \dots, F_{\b_t}^{(c_t)}$ are mutually commuting.
Assume that 
$F_{\b_s}^{(c_s)}\cdots F_{\b_t}^{(c_t)}$ is $\s$-stable.  Then 
$(c_s, \dots, c_t)$ is written as $(\ul c_k, \dots, \ul c_k)$ ($|j_k|$-times) for
some $\ul c_k$,  and we have
\begin{equation*}
\tag{3.8.2}  
\Phi(F_{\ul\b_k}^{(\ul c_k)}) = \pi(F_{\b_s}^{(c_s)}\cdots F_{\b_t}^{(c_t)}).
\end{equation*}

\par\bigskip
\section{ Monomial bases }

\para{4.1.}  
In the case where the Cartan datum $X$ is symmetric, a monomial basis of
$\BU_q^-$ was constructed by Lusztig \cite{L-can} in a uniform way,
by making use of the representation theory of quivers.  For $\BU_q^-$ of general type,
a monomial basis was constructed by \cite{CX}, but the relationship with Lusztig's basis
is not clear, and their description for the exceptional case is rather complicated.
In this section, we construct a monomial basis for non-symmetric $X$, bi mimicking
Lusztig's construction, and by using Proposition 3.7, which has a good connection 
with Lusztig's monomial basis through the folding theory.

\para{4.2.}
Assume that $X = (I, (\ ,\ ))$ is symmetric.  Let $\ora{Q} = (I,\Om )$ be a quiver,
namely, an oriented graph, where $I$ is the set of vertices, and $\Om$ is the set of edges,
with a collection of arrows $i \to j$ for each edge $(i,j)$.
If $I$ is fixed, we just say $\Om$ is a quiver on $I$. 
A vertex $i \in I$ is called a ``sink'' if
there is no arrow $i \to j$, and is called a ``source'' if there is no arrow $j \to i$.
Assume that $i$ is a sink or source. The quiver $s_i\Om$ is defined by reversing
the arrow to $i$ or from $i$.
Let $\Bh = (i_1, \dots, i_N)$ be a reduced sequence of $w_0 \in W$
as given in 1.4.
We say that $\Bh = (i_1, \dots, i_N)$
is adapted to $\Om$ if $i_k$ is a sink of $\Om_k$ for $k = 1,2, \dots, N$,
where $\Om_k = s_{i_{k-1}}s_{i_{k-2}}\cdots s_{i_2}s_{i_1}\Om$.
It is known by \cite [Prop. 4.12]{L-can}, for a given orientation $\Om$, there exists
some $\Bh$ such that $\Bh$ is adapted to $\Om$. 

\para{4.3.}
Let $\s : I \to I$ be an admissible automorphism, and $\ul X = (J, (\ ,\ ))$
the induced Cartan datum as given in 3.1.  We assume that $X$ is irreducible.
Let $\ul\Bh$ be a reduced sequence for $\ul w_0$, and $\Bh$ the reduced sequence for
$w_0$ obtained from $\ul\Bh$ as in 3.5.  We shall find $\ul\Bh$ and an orientation $\Om$ such that
$\Bh$ is adapted to $\Om$. 
\par
First we give a general remark. 
We consider a Dynkin diagram of symmetric type.
Let $I = I_0 \sqcup I_1$ be a partition of $I$ such that for any edge, one of
the endpoints will be in $I_0$ and the other in $I_1$.
Set $c_0 = \prod_{i \in I_0}s_i, c_1 = \prod_{i \in I_1}s_i$ (in any order).
Then $c_0c_1, c_1c_0$ give Coxeter elements in $W$. The following result was
proved by Steinberg (see \cite [3.17]{H}).
\par\medskip\noindent
(4.3.1) \ Let $c_0, c_1$ be as above. Let $h$ be the Coxeter number. Then 
\begin{align*}
w_0 &= \cdots c_0c_1c_0 \ \text{($h$-factors)} \\
    &= \cdots c_1c_0c_1 \ \text{($h$-factors)}
\end{align*}  
are reduced expressions for the longest element $w_0 \in W$.
In particular, if $h$ is even, then $w_0 = c^{h/2}$ gives a reduced
expression of $w_0$ for a Coxeter element $c = c_0c_1$ or $c = c_1c_0$. 
\par\medskip
By Kirillov \cite[3.33]{Ki}, we have the following.
\par\medskip\noindent
(4.3.2) \ Define an orientation of $\Om$ so that
$i \in I_0$ is a sink, and $i \in I_1$ is a source.
Assume that $h$ is even.  Then the reduced expression
$w_0 = c_0c_1\cdots c_0c_1$ is adapted to $\Om$. 

\par\medskip
The irreducible Cartan datum of symmetric type is
$X = A_{2n-1}, D_n, E_6$ with $\Bn = 2$, or $D_4$ with $\Bn = 3$. 
In that case, $\ul X = B_n, C_{n-1}, F_4$ or $G_2$. 
We consider each case separately.  Let $J$ be the set of $\s$-orbits in $I$.
In the discussion below, we consider a condition on the total order of $I$,
\par\medskip\noindent
(*) \  The total order $i < j$ on $I$ satisfies the property 
   `` $i \to j$ implies that $j < i$ ''. 

\par\medskip

(A) The case $A_{2n-1}$.
We fix a labeling of $I$ such that
\begin{equation*}
  I = \{ 1,2, \dots, n-1, n, (n-1)', \dots, 2',1'\},
\end{equation*}
where
$\s: i \lra i'$ and $\s(n) = n$.
Then $J$ is given by $J = \{ \ul 1, \ul 2, \dots, \ul{n-1}, \ul n\}$,
where $\ul 1 = \{1,1'\}, \ul 2 =  \{ 2,2'\} , \dots \ul{n-1} = \{ n-1, (n-1)'\},
\ul n = \{ n \}$. 
Let
\begin{align*}
\tag{4.3.3}
I_0 = \{ 1,1', 3,3', \dots \}, \qquad I_1 = \{ 2,2', 4,4', \dots\},
\end{align*}
where $n \in I_0$ (resp. $n \in I_1$) if $n$ is odd (resp. $n$ is even).  
We define an orientation on $\Om$ as in (4.3.2). 
Let $c_0 = \prod_{i \in I_0}s_i, c_1 = \prod_{i \in I_1}s_i$ (in any order).
Then $c = c_0c_1$ gives a Coxeter element of $W = W(A_{2n-1})$.
We choose a total order on $I_0$ and $I_1$ as in (4.3.3), where $n$ is the last element
in $I_0$ or $I_1$, and define a total order on $I$ by $I = \{ i_1, \dots, i_{2n-1} \}$
by juxtaposition $I = I_0\sqcup I_1$. 
This total order satisfies the property (*). 
\par
Let $h$ be the Coxeter number of $W$.
Then $h = (2n-1) + 1 = 2n$ is even.  Hence $w_0 = c^{h/2}$ gives a reduced
expression of $w_0 \in W$ by (4.3.1).
We have $|\vD^+(A_{2n-1})| = n(2n-1)$, and $h/2 = n$. 
Set
\par\smallskip
\begin{equation*}
\tag{4.3.4}  
  \Bh = (\underbrace{1,1',3,3', \dots, 2,2',4,4'\dots}_{\text{corresp. to } c},
             \dots, \underbrace{1,1',3,3',\dots, 2,2',4,4' \dots}_{\text{corresp. to } c})
            \quad \text{ ($h/2$-times)}.
\end{equation*}
Then $\Bh$ gives a reduced sequence of $w_0$, which is adapted to $\Om$.
We have

\begin{equation*}
\tag{4.3.5}  
  \ul\Bh = (\underbrace{\ul 1, \ul 3, \dots, \ul 2, \ul 4\dots}_{\text{corresp. to }c},
             \dots, \underbrace{\ul 1, \ul 3,\dots, \ul 2, \ul 4, \dots}_{\text{corresp. to }c})
            \quad \text{ ($h/2$-times)}.
\end{equation*}

\par\medskip
(B) \ The case $D_n$. 
We fix a labeling of $I$ as
$I = \{ 1,2,\dots, n-2, n, n' \}$, where $\s : n \lra n'$, and
$\s(i) = i$ for $i = 1, \dots, n-2$.
Then $J = \{ \ul 1, \ul 2, \dots, \ul{n-2}, \ul{n-1}\}$,
where $\ul i = \{i \}$ for $i = 1, \dots, n-2$, and $\ul{n-1} = \{ n, n'\}$. 
We define an orientation
as in (4.3.2), where $I_0 = \{ 1,3, \dots, \}$ and $I_1 = \{ 2,4, \dots\}$..
Then $\{ n, n'\} \subset I_0$ or $I_1$.  

We define a total order on $I = I_0 \sqcup I_1$ by
\begin{equation*}
I = \{ 1,3,\dots, 2,4, \dots \}. 
\end{equation*}  
Then this total order on $I$ satisfies the property (*).
Let $c = c_0c_1$ be a Coxeter element in $W = W(D_n)$.
Since $h = 2n-2$ is even, we have $w_0 = c^{h/2} = c^{n-1}$ gives a reduced expression of
$w_0$.
Set
\begin{equation*}
\tag{4.3.6}  
  \Bh = (\underbrace{1,3,\dots, 2,4,\dots}_{\text{corresp. to }c},
                  \dots, \underbrace{1,3,\dots 2,4, \dots}_{\text{corresp. to } c})
                \quad \text{ ($h/2$-times)}.
\end{equation*} 
Then $\Bh$ gives a reduced sequence of $w_0$, which is adapted to $\Om$. 
We have
\begin{equation*}
\tag{4.3.7}  
  \ul\Bh = (\underbrace{\ul 1,\ul 3, \dots, \ul 2,\ul 4, \dots}_{\text{corresp. to }c}, \dots,
             \underbrace{\ul 1, \ul 3,\dots \ul 2, \ul 4, \dots}_{\text{corresp. to }c})
                \quad \text{ ($h/2$-times)}.
\end{equation*} 

\par\medskip
(C) \ The case $E_6$.  We define a labeling of $I$ as $I = \{ 1,2,3,2',1',4\}$
as in (4.3.8). 
We have $\s : 1 \lra 1', 2 \lra 2',$ and $\s(3) = 3, \s(4) = 4$.
Then $J = \{ \ul 1, \ul 2, \ul 3, \ul 4\}$, where
$\ul 1 = \{ 1,1'\}, \ul 2 = \{ 2,2'\}, \ul 3 = \{ 3\}, \ul 4 = \{ 4\}$. 
We define an orientation $\Om$ as in (4.3.2), where
$I_0 = \{ 1,1',3\}, I_1 = \{ 2,2',4\}$.  Hence we have 

\begin{equation*}
\tag{4.3.8}
  \xymatrix@C=20pt@ R=16pt@ M =6pt{
   &                  &  4  \ar[d]  &                    &  \\  
1  &  2 \ar[l] \ar[r] &  3          &  2' \ar[l] \ar [r] &  1'   
}
\end{equation*}

Define a total order on $I = I_0 \sqcup I_1$ by the total order of $I_0, I_1$ as above, 
\begin{equation*}
I = \{ 1,1',3,2,2',4 \}.
\end{equation*}  
This total order satisfies the property (*). 
Let $c = s_1s_{1'}s_3s_2s_{2'}s_4$ be a Coxeter element in $W$.
Since $h = 12$ is even, we see that $w_0 = c^{h/2}$ gives a reduced expression
of $w_0 \in W(E_6)$. Set
\begin{equation*}
\tag{4.3.9}  
  \Bh = (\underbrace{1,1',3,,2,2',4}_{\text{corresp. to }c},
             \dots \underbrace{1,1',3,2,2',4}_{\text{corresp. to }c})
           \quad \text{ ($h/2$-times).}
\end{equation*}
Then $\Bh$ gives a reduced expression of $w_0$, which is adapted to $\Om$. 
We have
\begin{equation*}
\tag{4.3.10}  
  \ul\Bh = (\underbrace{\ul 1, \ul 3, \ul 2, \ul 4}_{\text{corresp. to }c},
             \dots \underbrace{\ul 1, \ul 3, \ul 2, \ul 4}_{\text{corresp. to }c})
           \quad \text{ ($h/2$-times).}
\end{equation*}

\par\medskip
(D) \ The case $D_4$.  We fix a labeling of $I$ as $I = \{ 1,1',1'',2\}$,
where $\s: 1 \mapsto 1' \mapsto 1'' \mapsto 1$, and $\s(2) = 2$.
Then $J = \{ \ul 1, \ul 2\}$ with $\ul 1= \{ 1,1',1''\}, \ul 2 = \{ 2\}$. 
We define an orientation $\Om$ as in (4.3.2), where
$I = I_0 \sqcup I_1$ with $I_0 = \{ 1,1',1''\}$ and $I_1 = \{ 2\}$. 
Hence
\begin{equation*}
\tag{4.3.11}
  \xymatrix@C=20pt@ R=16pt@ M =6pt{
         &  1'  &   \\
1  &  2 \ar[l] \ar[r] \ar[u] &  1''   
}
\end{equation*}

Define a total order on $I = I_0 \sqcup I_1$ by the total order of $I_0$ and $I_1$
as above, hence we have 
\begin{equation*}
I = \{ 1,1',1'',2\}.
\end{equation*}   
This total order satisfies the property (*).
Let $c = s_1s_{1'}s_{1''}s_2$ be a Coxeter element in $W(D_4)$.
Since $h = 6$ is even, $w_0 = c^{h/2} = c^3$ gives a reduced expression of $w_0 \in W(D_4)$.
Set
\begin{equation*}
\tag{4.3.12}  
  \Bh = (\underbrace{1,1',1'',2}_{\text{corresp. to }c},
          \underbrace{1,1',1'',2}_{\text{corresp. to }c},
         \underbrace{1,1',1'',2}_{\text{corresp. to }c}).
\end{equation*}  
Then $\Bh$ gives a reduced sequence of $w_0$, which is adapted to $\Om$. 
We have

\begin{equation*}
\tag{4.3.13}  
  \ul\Bh = (\ul 1, \ul 2, \ul 1, \ul 2, \ul 1, \ul 2).
\end{equation*}  

\para{4.4.}
Under the setting in 4.2, 
let $\BV = \bigoplus_{i \in I}V_i$ be a representation of $\ora{Q}$.
The dimension vector $\dim \BV = \Bd = (d_i)_{i \in I}$ is defined by $d_i = \dim V_i$. 
Let
\begin{equation*}
\BE_{\BV} = \bigoplus_{i \to j}\Hom (V_i, V_j)
\end{equation*}
be the representation space associated to $\BV$.  Then $G_{\BV} = \prod_{i \in I}GL(V_i)$
acts on $\BE_{\BV}$.   Each $x \in \BE_{\BV}$ corresponds to a representation of
$\ora{Q}$, and the isomorphism classes of representations correspond to the
$G_{\BV}$-orbits in $\BE_{\BV}$.  Now the isomorphism classes of
those representations are in bijection with
$\Bc = (c_1, \dots, c_N)\in \NN^N$ such that $\sum_{k=1}^Nc_k\b_k = \sum_{i \in I}d_i\a_i$.  
We denote by $\SO_{\Bc}$ the orbit in $\BE_{\BV}$ corresponding to $\Bc$. 
\par
First we construct a monomial basis of
$\BU_q^-$ following \cite{L-can}. Let $\Bh = (i_1, \dots, i_N)$ be a reduced sequence
of $w_0 \in W$ such that $\Bh$ is adapted to $\Om$.
$\Bh$ determines a total order of $\vD^+$ such that $\vD^+ = \{ \b_1, \dots, \b_N\}$.
Now take a simple root $\a_i$.  Then $\a_i$ appears in the sequence $(\b_1, \dots, \b_N)$.
The following important property was proved in \cite [4.14 (b)]{L-can}.
\par\medskip\noindent
(4.4.1) \  Assume that $\Bh$ is adapted to $\Om$. If $i \to j$ in $\Om$,
then $\a_j$ precedes $\a_i$ in the sequence $(\b_1, \dots, \b_N)$, namely, we have
$(\b_1, \dots, \b_N) = (\dots, \a_j, \dots, \a_i, \dots)$. 
\par\medskip
We fix a total order of $I$ as $I = \{ 1, 2, \dots, n\}$ which satisfies
the condition (*)  in 4.3, namely, $i \to j$ implies that $j < i$. 
\par
Recall that PBW basis $L(\Bc, \Bh) \in \SX_{\Bh}$
is given as $L(\Bc, \Bh) = F_{\b_1}^{(c_1)}\cdots F_{\b_N}^{(c_N)}$
for $\Bc = (c_1, \dots, c_N) \in \NN^N$.  We have $\weit(L(\Bc,\Bh)) = -\sum_{k=1}^Nc_k\b_k$.  
We fix $\Bd = (d_1, \dots, d_n) \in \NN^n$, and consider the representation space
$\BE_{\BV}$ such that $\dim \BV = \Bd$. We define a ``monomial'' 
\begin{equation*}
\tag{4.4.2}  
F(\Bd) = f_n^{(d_n)}f_{n-1}^{(d_{n-1})}\cdots f_1^{(d_1)}. 
\end{equation*}

The following result was proved in Lemma 7.4 and Proposition 7.7 in \cite{L-can}.

\begin{lem}  
The monomial $F(\Bd)$ is written as
\begin{equation*}
F(\Bd) = \sum_{\Bc}q^{\d(\Bc)}L(\Bc, \Bh),  
\end{equation*}  
where the sum is taken over $\Bc \in \NN^N$ such that
$\SO_{\Bc} \subset \BE_{\BV}$.  Here $\d(\Bc) = \dim \BE_{\BV} - \dim \SO_{\Bc}$,
and $\d(\Bc) = 0$ for exactly one element $\Bc$. 
\end{lem}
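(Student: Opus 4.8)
The plan is to deduce Lemma 4.5 from the geometric description of $\BU_q^-$ in terms of perverse sheaves (or constructible functions) on the representation spaces $\BE_{\BV}$, exactly as in \cite{L-can}. First I would recall the dictionary: the monomial $F(\Bd) = f_n^{(d_n)}\cdots f_1^{(d_1)}$ corresponds, under Lusztig's geometric realization, to a convolution product of the constant sheaves (suitably shifted) on the spaces $\BE_{\BV_i}$ with $\BV_i$ supported at a single vertex. Concretely, such a product is computed by pushing forward the constant sheaf along a proper map $\pi_{\Bd}\colon \wt\BE_{\Bd} \to \BE_{\BV}$ whose source is a variety of flags of subrepresentations with the prescribed successive quotients; the fibers of $\pi_{\Bd}$ over a point in the orbit $\SO_{\Bc}$ control the coefficient of $L(\Bc,\Bh)$ in $F(\Bd)$. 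So the first substantive step is to identify $\pi_{\Bd}$ precisely and observe that, because $\Bh$ is adapted to $\Om$ and $I$ is ordered by condition (*) (so that the ordering $f_n,\dots,f_1$ is compatible with the arrows as in (4.4.1)), the flag variety in question is smooth and $\pi_{\Bd}$ is semismall — indeed small over the relevant stratum.

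Next I would make the support statement: the image of $\pi_{\Bd}$ is the closure of a single orbit, and every $\SO_{\Bc}$ meeting the image is contained in $\ol{\BE_{\BV}}$'s relevant piece, i.e. satisfies $\sum_k c_k\b_k = \sum_i d_i\a_i$ with $\SO_{\Bc}\subset\BE_{\BV}$; this is exactly the weight constraint already recorded before (4.4.2) together with the fact that the flags built from the adapted ordering can only produce subrepresentations of the fixed type. Then the coefficient $q^{\d(\Bc)}$ is read off: the stalk of $(\pi_{\Bd})_!\ol\QQ_\ell$ at a point of $\SO_{\Bc}$ has Poincaré polynomial a power of $q$ because the fiber is (by the adaptedness hypothesis and (4.4.1)) an affine space — here one uses the key combinatorial input of \cite[4.14]{L-can} that the adapted order puts $\a_j$ before $\a_i$ whenever $i\to j$, which forces the relevant $\Ext^1$ and $\Hom$ spaces controlling the fiber to collapse to affine cells. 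The exponent is then the codimension $\dim\BE_{\BV}-\dim\SO_{\Bc}$, giving $\d(\Bc)$ as stated, and the unique $\Bc$ with $\d(\Bc)=0$ is the one indexing the dense (open) orbit in $\ol{\Im\pi_{\Bd}}$, over which $\pi_{\Bd}$ is an isomorphism.

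Finally I would assemble these into the stated formula $F(\Bd)=\sum_{\Bc}q^{\d(\Bc)}L(\Bc,\Bh)$ by matching the geometric convolution with the algebraic product: this is Lusztig's identification of PBW monomials with the classes of orbit closures (more precisely of $\IC$ sheaves of orbit closures modulo lower terms), so the pushforward decomposes into a sum over strata with the computed multiplicities. Since the lemma is quoted verbatim from \cite[Lem. 7.4, Prop. 7.7]{L-can}, the honest option in the paper is simply to cite it; but if a self-contained argument is wanted, the route above is the one to spell out. The main obstacle is the second step: verifying that each nonempty fiber of $\pi_{\Bd}$ over $\SO_{\Bc}$ is an affine space (equivalently, that the relevant quiver $\Ext$-groups vanish), which is precisely where the hypothesis ``$\Bh$ adapted to $\Om$'' and the ordering condition (*) are essential and where the representation theory of the quiver $\ora Q$ — homological orthogonality of the exceptional sequence of simples in the adapted order — does all the work.
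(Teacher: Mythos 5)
The paper offers no proof of this lemma: it is stated verbatim as a quotation of Lusztig's Lemma 7.4 and Proposition 7.7 in \cite{L-can}, and the surrounding text simply records the citation. Your proposal correctly recognizes this, and the geometric outline you give is a reasonable reconstruction of what Lusztig does. Two points of calibration are worth noting. First, Lusztig's 1990 argument lives in the constructible-function/Hall-algebra world (counting subrepresentations over $\FF_q$), not the perverse-sheaf formalism of \cite{L-quiver}; the translation is standard, but for this specific lemma the more elementary counting picture is the one actually used, and phrasing everything through $(\pi_{\Bd})_!\ol\QQ_\ell$ and smallness imports machinery the statement does not need. Second, the ``affine-space fiber'' step is in fact sharper than you say: with the ordering (*) (so that $i\to j$ forces $j<i$), the filtration $0=M_0\subset M_1\subset\cdots\subset M_n=M$ with $M_k/M_{k-1}\cong S_k^{\oplus d_k}$ exists and is \emph{unique} for every $M\in\BE_{\BV}$, since $M_1$ must be the whole vertex-$1$ space $V_1$ (vertex $1$ is a sink), then $M_2/M_1$ must be all of $V_2$ in $M/M_1$, and so on. Thus $\pi_{\Bd}$ is a bijection on points, not merely semismall, and $F(\Bd)$ becomes (after the standard normalization) exactly the sum of characteristic functions of the orbits $\SO_{\Bc}\subset\BE_{\BV}$; comparing normalizations of $L(\Bc,\Bh)$ with that of $F(\Bd)$ produces the power $q^{\d(\Bc)}$, and the unique $\Bc$ with $\d(\Bc)=0$ corresponds to the open orbit of $\BE_{\BV}$ (which exists since $X$ is of finite type). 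Your sketch thus reaches the right conclusion and identifies the right role of adaptedness and (*), but it should be tightened so that the uniqueness of the filtration, rather than an appeal to Ext-vanishing or affine cells, carries the weight.
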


\para{4.6.}
Assume given $\Bc = (c_1, \dots, c_N) \in \NN^N$.  For $k \in [1,N]$,
define $\Bc_k \in \NN^N$ by the condition that the $k$-th coordinate of $\Bc_k$
coincides with that of $\Bc$, and all other coordinates are zero, namely,
$\Bc_k = (0, \dots, 0, c_k, 0, \dots, 0)$. 
Define $\Bd^k = (d_1^k, \dots, d_n^k) \in \BN^n$ by the condition that
$c_k\b_k = \sum_{j = 1}^nd_j^k\a_j$.
Let
\begin{equation*}
\tag{4.6.1}  
F((\Bc)) = F(\Bd^1)F(\Bd^2)\cdots F(\Bd^N)
\end{equation*}  
where
\begin{equation*}
\tag{4.6.2}  
F(\Bd^k) = f_n^{(d_n^k)}f_{n-1}^{(d_{n-1}^k)}\cdots f_1^{(d_1^k)}.
\end{equation*}  

Note that $\weit(F(\Bd^k)) = -\sum_{j=1}^n d_j^k\a_j = -c_k\b_k$,
hence
\begin{equation*}
  \weit(F((\Bc))) = -\sum_{k = 1}^N c_k\b_k = \weit(L(\Bc,\Bh)).
\end{equation*}  
The following result was proved in \cite [7.8]{L-can}.

\begin{thm}  
\begin{enumerate}
\item 
For each $\Bc \in \NN^N$, $F((\Bc))$ can be written as
\begin{equation*}
  F((\Bc)) = L(\Bc, \Bh) + \sum_{\Bc' > \Bc}h_{\Bc',\Bc}L(\Bc',\Bh)
                 \quad\text{ with  }  \quad h_{\Bc,\Bc'} \in \BA,
\end{equation*}
where $\Bc'$ satisfies the condition $\weit(L(\Bc',\Bh)) = \weit(L(\Bc,\Bh))$.
\\
Hence  $\{ F((\Bc)) \mid \Bc \in \NN^N\}$
gives an $\BA$-basis of ${}_{\BA}\BU_q^-$ and a $\QQ(q)$-basis of $\BU_q^-$. 
\item
We set $\SM_{\Bh} = \{ m(\Bc,\Bh) \mid \Bc \in \NN^N\}$, with $m(\Bc, \Bh) = F((\Bc))$.  
Then $\SM_{\Bh}$ is a monomial basis of $\BU_q^-$. 
\end{enumerate}
\end{thm}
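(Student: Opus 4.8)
The plan is to prove the triangularity asserted in (i); the two basis statements in (i) and statement (ii) then follow at once. Once the triangularity holds, the transition matrix from $\SX_{\Bh}$ to $\{F((\Bc))\}$ is, on each finite--rank weight block (exactly as in the reduction of 1.10), lower unitriangular over $\BA$ with respect to the order of 1.5, hence invertible over $\BA$, so $\{F((\Bc))\}$ is an $\BA$--basis of ${}_{\BA}\BU_q^-$ and a $\QQ(q)$--basis of $\BU_q^-$; and $F((\Bc))$ is by construction a product of divided powers $f_i^{(d)}$, cf. (4.6.1)--(4.6.2), so (i) exhibits it as a monomial basis in the sense of (1.7.1).

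To prove (i), assume $\Bc\ne 0$, set $r=\min s(\Bc)$, and let $\wt\Bc$ be $\Bc$ with its $r$th coordinate replaced by $0$. Since the $\Bd$--vectors attached to the coordinates $<r$ vanish, $F((\Bc))=F(\Bd^r)\,F((\wt\Bc))$ and $|\weit F((\wt\Bc))|<|\weit F((\Bc))|$, so I would induct on $|\weit F((\Bc))|$, the base case $\Bc=0$ being $F(())=1$. By induction $F((\wt\Bc))=L(\wt\Bc,\Bh)+\sum_{\Bc'''>\wt\Bc}g_{\Bc'''}L(\Bc''',\Bh)$ with $g_{\Bc'''}\in\BA$ and all $\Bc'''$ of weight $\weit L(\wt\Bc,\Bh)$, while $L(\wt\Bc,\Bh)=F_{\b_{r+1}}^{(c_{r+1})}\cdots F_{\b_N}^{(c_N)}$. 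For the single factor, Lemma 4.5 gives $F(\Bd^r)=\sum_{\Be}q^{\d(\Be)}L(\Be,\Bh)$ over $\Be\in\NN^N$ with $\sum_i e_i\b_i=c_r\b_r$, with $\d(\Be)=0$ for exactly one $\Be$. That $\Be$ is $\Bc_r$: the representation $c_r M_{\b_r}$ (the sum of $c_r$ copies of the indecomposable of dimension vector $\b_r$) is rigid, all indecomposables being rigid in Dynkin type, so its orbit $\SO_{\Bc_r}$ is open dense in $\BE_{\BV^r}$ and $\d(\Bc_r)=0$; here I use Lusztig's identification of the $\Om$--adapted PBW basis with quiver representations, under which $\SO_{\Bc_r}$ corresponds to $F_{\b_r}^{(c_r)}=L(\Bc_r,\Bh)$, so its coefficient is $1$. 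Any other $\Be$ occurring has $\min s(\Be)<r$: otherwise $e_i=0$ for $i<r$ and $(c_r-e_r)\b_r=\sum_{i>r}e_i\b_i$, but applying $s_{i_r}\cdots s_{i_1}$, which sends $\b_r$ into $\vD^-$ and every $\b_i$ with $i>r$ into $\vD^+$, would turn a positive root into a nonnegative sum of negative roots, impossible unless $\Be=\Bc_r$. Thus $F(\Bd^r)=L(\Bc_r,\Bh)+\sum_{\min s(\Be)<r}q^{\d(\Be)}L(\Be,\Bh)$.

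Now multiply out. The term $L(\Bc_r,\Bh)L(\wt\Bc,\Bh)=F_{\b_r}^{(c_r)}F_{\b_{r+1}}^{(c_{r+1})}\cdots F_{\b_N}^{(c_N)}$ equals $L(\Bc,\Bh)$, with coefficient $1$; every remaining term has one of the shapes $F_{\b_r}^{(c_r)}L(\Bc''',\Bh)$ with $\Bc'''>\wt\Bc$, or $L(\Be,\Bh)L(\wt\Bc,\Bh)$ or $L(\Be,\Bh)L(\Bc''',\Bh)$ with $\min s(\Be)<r$. By Proposition 2.5 every PBW element $L(\Bc'',\Bh)$ arising has $\weit L(\Bc'',\Bh)=\weit L(\Bc,\Bh)$, and I claim $\Bc''>\Bc$. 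The key observation is that if the left factor is some $L(\Ba,\Bh)$ with $\min s(\Ba)<r$, then Proposition 2.5(i) gives $\Bc''\ge\Ba$, so $\Bc''$ has a nonzero coordinate at some index $<r$, whence $\Bc''>\Bc$ since $s(\Bc)\subset[r,N]$; this settles the two families with left factor $L(\Be,\Bh)$. For $F_{\b_r}^{(c_r)}L(\Bc''',\Bh)$: if $\min s(\Bc''')>r$ the product is $L(\Bc'',\Bh)$ with $\Bc''=(0,\dots,0,c_r,\Bc'''_{r+1},\dots,\Bc'''_N)$, exceeding $\Bc$ because $(\Bc'''_{r+1},\dots,\Bc'''_N)>(c_{r+1},\dots,c_N)$; if $\min s(\Bc''')=r$ it is a $q$--binomial multiple of $L(\Bc'',\Bh)$ with $\Bc''=(0,\dots,0,c_r+\Bc'''_r,\Bc'''_{r+1},\dots)$, exceeding $\Bc$ at index $r$; and if $a:=\min s(\Bc''')<r$, write $L(\Bc''',\Bh)=F_{\b_a}^{(\Bc'''_a)}L'$ with $L'$ the product of the remaining factors, and expand $F_{\b_r}^{(c_r)}F_{\b_a}^{(\Bc'''_a)}$ by Proposition 2.3 --- a weight count, using that $\b_a\ne\b_r$ are non-proportional positive roots, shows that every PBW element occurring there has support meeting $[1,r-1]$ --- so the key observation applies to that element times $L'$. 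This gives $F((\Bc))=L(\Bc,\Bh)+\sum_{\Bc''>\Bc}h_{\Bc'',\Bc}L(\Bc'',\Bh)$ with $h_{\Bc'',\Bc}\in\BA$, which is (i).

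I expect the bookkeeping of the previous paragraph --- organizing the expansions of the various products through Propositions 2.3 and 2.5 into the right sub-cases and verifying that each stays strictly above $\Bc$ in the order of 1.5 --- to be the main point requiring care; the identification of the $\d=0$ term of $F(\Bd^r)$ and the root-system fact $\b_r\notin\sum_{i>r}\NN\b_i$ are short. Alternatively one may simply quote Lusztig's geometric proof of \cite[7.8]{L-can}, where $F((\Bc))$ is realized via a resolution of $\ol{\SO_{\Bc}}$ and the leading term is read off from the open stratum; the argument sketched above has the advantage of staying within the framework of Sections 1--2.
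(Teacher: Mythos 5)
Your proposal is correct, and it is worth noting that the paper itself does \emph{not} prove Theorem 4.7 --- it simply cites Lusztig \cite[7.8]{L-can}, whose argument is geometric (a small resolution of $\overline{\SO_{\Bc}}$, with the leading coefficient read off from the open stratum). What you give instead is an elementary algebraic proof, and it is essentially the same method the paper uses later, in the proof of Theorem 4.16, for the non-symmetric analogue: expand each single factor $F(\Bd^k)$ by Lemma 4.5 (resp. Lemma 4.15 in the folding case), pin down its leading term, and then control all cross terms in the product through the commutation results Propositions 2.3 and 2.5. The one structural difference is the direction of the induction: the paper's Theorem 4.16 builds the product left-to-right, inducting on the number of factors and proving a statement about $\ul F(\ul\Bd^1)\cdots\ul F(\ul\Bd^k)$ with leading term $L(\ul\Bc_{\le k},\ul\Bh)$, whereas you peel off the leftmost factor $F(\Bd^r)$ and induct on weight. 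Both reduce to the same four-way product analysis (leading $\times$ leading, correction $\times$ leading, leading $\times$ correction, correction $\times$ correction), and your ``key observation'' --- that a left factor $L(\Ba,\Bh)$ with $\min s(\Ba)<r$ forces, via Proposition 2.5(i), every resulting $\Bc''$ to have a nonzero coordinate below $r$ and hence exceed $\Bc$ --- is exactly the mechanism behind cases (b)--(d) of that proof.

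Two small remarks on your write-up. First, the paper dispatches $\d(\Bc_k)=0$ ``by (4.12.1)'' (a direct evaluation of Lusztig's dimension formula), where you instead invoke rigidity of $c_r M_{\b_r}$; this is fine, but it should be said that rigidity of $M_{\b_r}$ implies rigidity of its direct sum of $c_r$ copies because $\Ext^1(M^{\oplus c_r},M^{\oplus c_r})=\Ext^1(M,M)^{\oplus c_r^2}$ --- the phrase ``all indecomposables being rigid'' alone does not literally cover a decomposable module. Second, your root-system argument ruling out $\Be$ with $\min s(\Be)\ge r$, $\Be\ne\Bc_r$, by applying $s_{i_r}\cdots s_{i_1}$ is correct; one should just be careful that the case $e_r>c_r$ is already impossible for positivity reasons before conjugating, after which only $e_r<c_r$ remains to be killed by the argument you give. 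Modulo these cosmetic points the proof is sound, and, as you observe at the end, it has the virtue of staying inside the framework of Sections 1--2 rather than appealing to perverse sheaves.
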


\remark{4.8.}
In the original form of Theorem 4.7, Lusztig uses
the condition that $\dim \SO_{\Bc'} < \dim \SO_{\Bc}$ instead of $\Bc < \Bc'$.
We note that
\par\medskip\noindent
(4.8.1) \ If $\dim \SO_{\Bc'} <  \dim \SO_{\Bc}$, then we have $\Bc < \Bc'$. 
\par\medskip
In fact, it is proved in \cite[Prop. 7.9]{L-can} that
\begin{equation*}
  \ol{L(\Bc,\Bh)} =  L(\Bc,\Bh) + \sum_{\Bc'}\w_{\Bc'}^{\Bc}L(\Bc',\Bh)
     \quad\text{ with }\quad \w_{\Bc'}^{\Bc} \in \BA, 
\end{equation*}
where 
$\Bc'$ runs over the elements in $\NN^N$ such that $\dim \SO_{\Bc'} < \dim \SO_{\Bc}$,
and $\w^{\Bc}_{\Bc'} \ne 0$ occurs exactly when $\dim \SO_{\Bc'} < \dim \SO_{\Bc}$.
On the other hand, it is known by (1.5.1) that
\begin{equation*}
\ol{L(\Bc,\Bh)} = L(\Bc,\Bh) + \sum_{\Bc' > \Bc}a_{\Bc',\Bc}L(\Bc,\Bh).
\end{equation*}  
(Here it may happen that $a_{\Bc',\Bc} = 0$.)
Comparing these two formulas, we obtain (4.8.1). 

\para{4.9.}
Let $\ul\BU_q^-$ be associated to $\ul X = (J, (\ \ ))$.
We want to consider an analogue of Theorem 4.7 for $\ul\BU_q^-$.
But the representation theory of quivers cannot be applied directly.
In the following, by modifying the discussion in \cite{L-can}, and by applying the
folding theory, we shall construct monomial bases of $\ul\BU_q^-$.
We follow the setup in 4.3.  Then $\Bh, \ul\Bh$ can be defined, and
an orientation $\Om$ is determined as in 4.3.  Let $I = \{ 1,2, \dots, n\}$
be the total order of $I$ as given in 4.3.  Then $\Bh$ is adapted to $\Om$,
and the total order on $I$ satisfies the condition (*) in 4.3.
The total order of $I$ induces a total order on $J$, which we write as
$J = \{ 1, 2, \dots, m \}$, where $m = |J|$.  We denote the generator
$f_j$ for $j \in J$ corresponding to $k = 1, \dots, m$ as $\ul f_k$. 
\par
Assume that $j \ne j' \in J$ are joined in the Dynkin diagram of $\ul X$.
Since the orientation $\Om$ constructed in 4.3 is compatible with the action of $\s$,
one can define an arrow $j \to j'$
by the condition that $i \to i'$ for some $i \in j, i' \in j'$ such that $i$ and $i'$ are
joined in the Dynkin digram of $X$.
\par
Assume that $j = \{ i_1, \dots, i_k\}$.  Then $\a_{i_1}, \dots, \a_{i_k}$ appears
successively in the sequence $\b_1, \dots, \b_N$
(the order on $\a_{i_1}, \dots, \a_{i_k}$ is ignored).
By applying (4.4.1), we have the following.
\par\medskip\noindent
(4.9.1) \  Assume that $j' \to j$ in $J$, with
$j = \{ i_1, \dots, i_k\}, j' = \{ i'_1, \dots, i'_{\ell}\}$.
Then in the sequence $\b_1, \dots, \b_N$, we have
\begin{equation*}
  \dots, \underbrace{\a_{i_1}, \dots, \a_{i_k}}_{j\text{-parts}} \dots,
     \underbrace{\a_{i'_1}, \dots, \a_{i'_{\ell}}}_{j'\text{-parts}}, \dots.
\end{equation*}  

For the proof of (4.9.1), we use a partition of $\vD^+$ as given in (3.8.1).
In this partition, if $\ul\b_k$ is a simple root in $\ul \vD^+$,
then the $\ul\b_k$-part in $\vD^+$ consists  of simple roots in $\vD^+$, and any simple
root in $\vD^+$ belongs to some $\ul\b_k$-part for a simple root $\ul\b_k \in \ul\vD^+$.

\par
For any $\ul\Bd = (\ul d_1, \dots, \ul d_m) \in \NN^m$, we define a monomial
$F(\ul\Bd) \in \ul\BU_q^-$ by 
\begin{equation*}
\tag{4.9.2}  
F(\ul\Bd) = \ul f_{m}^{(\ul d_m)}\ul f_{m-1}^{(\ul d_{m-1})}\dots \ul f_1^{(\ul d_1)}. 
\end{equation*}  

\par
Recall that $\SX_{\ul\Bh} = \{ L(\ul\Bc, \ul\Bh) \mid \ul\Bc \in \NN^{\ul N}\}$
is a PBW basis of $\ul\BU_q^-$.  We show a lemma

\begin{lem}  
$\ul f_1^{(\ul d_1)}\ul f_2^{(\ul d_2)}\cdots \ul f_m^{(\ul d_m)}$ coincides with
$L(\ul\Be, \ul\Bh)  \in \SX_{\ul\Bh}$,    
where $\ul\Be = (\ul e_1, \dots, \ul e_{\ul N})$ 
is a sequence such that $\ul d_1, \dots, \ul d_m$ appears in $\ul\Be$ in this order,
and all other coordinates in $\ul\Be$ are zero. 
\end{lem}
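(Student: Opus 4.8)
The plan is to reduce the identity to two facts about the reduced sequence $\ul\Bh$ fixed in 4.3: that the root vector attached to a simple root of $\ul X$ is the corresponding generator, and a precise description of where the simple roots of $\ul X$ sit inside $\ul\Bh$.

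First I would record that each simple root $\a_k$ $(k\in J=\{1,\dots,m\})$ occurs exactly once among $\ul\b_1,\dots,\ul\b_{\ul N}$, say $\ul\b_{p_k}=\a_k$, and that $F_{\ul\b_{p_k}}=\ul f_k$. Indeed $\weit(F_{\ul\b_{p_k}})=-\a_k$, so $F_{\ul\b_{p_k}}$ lies in the one-dimensional space $(\ul\BU_q^-)_{-\a_k}=\QQ(q)\,\ul f_k$; writing $F_{\ul\b_{p_k}}=\la\,\ul f_k$, the facts that it is bar-invariant (by (1.5.1), since $-\a_k$ is the weight of no PBW monomial other than $F_{\ul\b_{p_k}}$ itself), that it lies in ${}_{\BA}\ul\BU_q^-$, and that $(F_{\ul\b_{p_k}},F_{\ul\b_{p_k}})=(\ul f_k,\ul f_k)$ by invariance of the inner product under the braid operators $T_j$, together force $\la=1$.

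Next I would establish the combinatorial heart: that the simple roots occur in $\ul\Bh$ in the order $\a_1,\a_2,\dots,\a_m$, i.e.\ $p_1<p_2<\cdots<p_m$. By 4.3 the word $\ul\Bh$ is $h/2$ concatenated copies of the bipartite Coxeter word which, in the labelling $J=\{1,\dots,m\}$, reads $(1,2,\dots,m)$, with the $r$ sinks of $\ul X$ listed first and the remaining sources last; sinks are pairwise non-adjacent in the Dynkin diagram of $\ul X$, and so are sources. Hence in the first block $s_1\cdots s_{k-1}$ fixes $\a_k$ for $k\le r$, giving $\ul\b_k=\a_k$. Since $\ul X$ is of type $B_n,C_{n-1},F_4$ or $G_2$, the longest element $\ul w_0$ acts by $-1$, so $\ul\b_{\ul N}=-\ul w_0(\a_m)=\a_m$ (the last letter of $\ul\Bh$ being $m$); running backwards through the last block and using again that the sources are pairwise non-adjacent gives $\ul\b_{\ul N-(m-r)+1}=\a_{r+1},\dots,\ul\b_{\ul N}=\a_m$. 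These $r$ initial and $m-r$ terminal positions already exhibit all $m$ simple roots (each $\a_l$ appears exactly once among the $\ul\b_j$), so no $\ul\b_j$ in between is simple and the asserted order follows. (Alternatively, property (*) of the total order together with (4.9.1) gives the ordering of the simple roots for adjacent vertices, which already suffices to rewrite $\ul f_1^{(\ul d_1)}\cdots\ul f_m^{(\ul d_m)}$ as the required PBW monomial using only the commutations $\ul f_k\ul f_l=\ul f_l\ul f_k$ of non-adjacent generators.)

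Granting this, the vector $\ul\Be$ of the statement is the one with $\ul e_{p_s}=\ul d_s$ for $1\le s\le m$ and all other coordinates $0$, so that
\[
L(\ul\Be,\ul\Bh)=\prod_{s=1}^{m}F_{\ul\b_{p_s}}^{(\ul e_{p_s})}=\prod_{s=1}^{m}\ul f_s^{(\ul d_s)}=\ul f_1^{(\ul d_1)}\ul f_2^{(\ul d_2)}\cdots\ul f_m^{(\ul d_m)},
\]
which is the assertion. I expect the only genuine obstacle to be the combinatorial step of pinning down the positions of the simple roots in $\ul\Bh$; the remaining points ($F_{\ul\b_{p_k}}=\ul f_k$, and the bookkeeping of which $\ul\b_j$ are simple) are routine once one has in hand the picture of $\ul\Bh$ as a repeated bipartite Coxeter word together with the triviality of the diagram automorphism induced by $\ul w_0$ for the folded types.
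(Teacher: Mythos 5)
Your proof is correct but takes a genuinely different route from the paper. The paper transports information across the folding: it uses (4.9.1), which comes from Lusztig's adaptedness property (4.4.1) for the simply laced quiver $\Om$, to deduce that if $p(s)\to p(t)$ in the folded quiver then $t<s$, and hence that $\ul f_1^{(\ul d_1)}\cdots\ul f_m^{(\ul d_m)}$ can be brought into PBW order by commuting non-adjacent generators. Your main argument instead works entirely in the folded datum $\ul X$: you locate the positions of the simple roots of $\ul X$ directly in the sequence $\ul\b_1,\dots,\ul\b_{\ul N}$ (the sinks at the first $r$ positions of the initial bipartite Coxeter block, the sources at the last $m-r$ positions of the final block, the latter using $\ul w_0=-1$ for the folded types) and conclude that the permutation $p$ is the identity. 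This is in fact a cleaner conclusion: it pins down $\ul\Be$ exactly, as the lemma's wording ``$\ul d_1,\dots,\ul d_m$ in this order'' requires, whereas the paper's rearrangement argument as written yields $\ul e_{l(k)}=\ul d_{p(k)}$ and tacitly relies on $p=\id$. One small inaccuracy in your first step: the inner-product computation cannot distinguish $\la=1$ from $\la=-1$ (it only gives $\la^2=1$, and both signs are bar-invariant units of $\BA$); the clean way to see $F_{\ul\b_{p_k}}=\ul f_k$ is the standard braid-operator identity $T_w(f_j)=f_i$ whenever $w(\a_j)=\a_i$ with $l(ws_j)=l(w)+1$, applied to $w=s_{j_1}\cdots s_{j_{p_k-1}}$.
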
  

\begin{proof}
Let $\ul\b_1, \dots, \ul\b_{\ul N}$ be the total order of $\ul\vD^+$
obtained from $\ul\Bh$. Since $\ul\b_k$ is defined by
$\ul\b_k = s_{j_1}\cdots s_{j_{k-1}}(\a_{j_k})$, there exists a sequence
$l(1) < l(2) < \cdots < l(m)$ in $[1,\ul N]$, and a permutation
$p$ on $[1, m]$ such that $\a_j$ ($j \in J$) appears in the sequence
$\ul\b_1, \dots, \ul\b_{\ul N}$
along the following order 
\begin{equation*}
\tag{4.10.1}
\dots, \a_{p(1)}, \dots, \a_{p(2)}, \dots \dots, \a_{p(m)}, \dots, 
\end{equation*}
where $\a_{p(k)}$ coincides with $\ul\b_{l(k)}$ for $k = 1, \dots,m$. 
Thus for any $\ul\Be \in \NN^{\ul N}$, we have 
\begin{equation*}
\tag{4.10.2}  
\ul f_{p(1)}^{(\ul e_{l(1)})}\cdots \ul f_{p(m)}^{(\ul e_{l(m)})} =   
F_{\ul\b_{l(1)}}^{(\ul e_{l(1)})}\cdots F_{\ul\b_{l(m)}}^{(\ul e_{l(m)})} \in \SX_{\ul\Bh}.
\end{equation*}  
We consider the corresponding sequence in $\b_1, \dots, \b_N$ of $\vD^+$.
Then $\ul\b_{l(k)}$-part in $\b_1, \dots, \b_N$ consists of $\a_{s_k},\dots, \a_{t_k}$ of
successive simple roots in $\vD^+$, mutually orthogonal,
and by (4.9.1), they are given by the following order
\begin{equation*}
\tag{4.10.3}
  \dots, \underbrace{\a_{s_1}, \dots, \a_{t_1}}_{\ul\b_{l(1)}\text{-part}},
       \dots, \underbrace{\a_{s_2}, \dots, \a_{t_2}}_{\ul\b_{l(2)}\text{-part}},
       \dots\dots, \underbrace{\a_{s_m}, \dots, \a_{t_m}}_{\ul\b_{l(m)}\text{-part}}, \dots. 
\end{equation*}  
This implies, by (4.4.1), that if $i$ belongs to $\ul\b_k$-part,
and $i'$ belongs to $\ul\b_{k'}$-part
with $k \ne k'$, and if $i \to i'$, then $\a_{i'}$ precedes $\a_i$. 
Now in the sequence (4.10.1), if $p(s) \to p(t)$ with $s < t$,
then there exist $i$ belonging to
$\ul\a_{p(s)}$-part, and $i'$ belonging to $\ul\a_{p(t)}$-part
such that $i \to i'$, and so $\a_{i'}$ precedes $\a_i$ by (4.9.1). This is a contradiction.
Hence either $p(s) \to p(t)$ with $t < s$, or $p(s)$ and $p(t)$
are not joined.
If $p(s)$ and $p(t)$ are not joined, then $\ul f_{p(s)}^{(\ul e_{l(s)})}$
and $\ul f_{p(t)}^{(\ul e_{l(t)})}$ commute
each other.  
Hence there exists $\ul\Be = (\ul e_1, \dots, \ul e_{\ul N})$ such that
\begin{equation*}
\ul f_1^{(\ul d_1)}\cdots \ul f_m^{(\ul d_m)}
= \ul F_{\ul\b_{l(1)}}^{(\ul e_{l(1)})}\cdots \ul F_{\ul\b_{l(m)}}^{(\ul e_{l(m)})}
      \in \SX_{\ul\Bh},
\end{equation*}
where $\ul e_{l(k)} = \ul d_k$, and $\ul e_l = 0$ if $l \ne l(k)$. 
The lemma is proved. 
\end{proof}

As a corollary, we have

\begin{cor}  
Let $\ul\Bd = (\ul d_1, \dots, \ul d_m) \in \NN^m$.
There exists $\ul\Bc \in \NN^{\ul N}$ such that 
\begin{equation*}  
\ul f_m^{(\ul d_m)}\cdots \ul f_1^{(\ul d_1)}
    = L(\ul\Bc, \ul\Bh) + \sum_{\ul \Bc'> \ul\Bc}a_{\ul\Bc', \ul\Bc}L(\ul\Bc', \ul\Bh),
\end{equation*}
where $a_{\ul\Bc, \ul\Bc'} \in q\ZZ[q]$.  
Thus $\ul f_m^{(\ul d_m)}\cdots \ul f_1^{(\ul d_1)}$ coincides with
the canonical basis $b(\ul\Bc, \ul\Bh)$. 
\end{cor}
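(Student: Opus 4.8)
The plan is to derive this from Lemma 4.10 by applying the anti-involution ${}^*$ of 1.2, after first upgrading the conclusion of Lemma 4.10 to the statement that the ordered product $\ul f_1^{(\ul d_1)}\ul f_2^{(\ul d_2)}\cdots\ul f_m^{(\ul d_m)}$ is itself a canonical basis vector.

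For that upgrade, I would first observe that $\ul f_1^{(\ul d_1)}\cdots\ul f_m^{(\ul d_m)}$ is bar-invariant: each $\ul f_k^{(\ul d_k)}$ is bar-invariant (the generators are fixed by the bar involution and the divided-power normalizations are bar-invariant), and the bar involution is a ring homomorphism. By Lemma 4.10 this product equals a single PBW vector $L(\ul\Be,\ul\Bh)$. Feeding bar-invariance into the triangularity (1.5.1) for $L(\ul\Be,\ul\Bh)$ and invoking linear independence of $\SX_{\ul\Bh}$, all of the coefficients $a_{\ul\Bc',\ul\Be}$ occurring there vanish; thus $L(\ul\Be,\ul\Bh)$ satisfies conditions (i) and (ii) of (1.5.2), and by the uniqueness asserted there, $\ul f_1^{(\ul d_1)}\cdots\ul f_m^{(\ul d_m)} = b(\ul\Be,\ul\Bh)$.

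Next I would apply ${}^*$. Since ${}^*$ is $\QQ(q)$-linear with $f_j^* = f_j$, it fixes each divided power $\ul f_k^{(\ul d_k)}$ and reverses products, so $\bigl(\ul f_1^{(\ul d_1)}\cdots\ul f_m^{(\ul d_m)}\bigr)^* = \ul f_m^{(\ul d_m)}\cdots\ul f_1^{(\ul d_1)}$, the element in question. The canonical basis of $\ul\BU_q^-$ does not depend on the reduced sequence (Saito \cite{S}) and is stable under ${}^*$ (Lusztig \cite{L-book}), so $\ul f_m^{(\ul d_m)}\cdots\ul f_1^{(\ul d_1)} = b(\ul\Be,\ul\Bh)^* = b(\ul\Bc,\ul\Bh)$ for a unique $\ul\Bc \in \NN^{\ul N}$. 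The displayed expansion, with $a_{\ul\Bc',\ul\Bc}\in q\ZZ[q]$, is then just the defining property (1.5.2)(ii) of $b(\ul\Bc,\ul\Bh)$, and the last assertion of the corollary is immediate.

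The one non-elementary input is the ${}^*$-stability of the canonical basis; once that is granted the argument is purely formal. The main obstacle, were one to attempt a self-contained route, would be to obtain the expansion of $\ul f_m^{(\ul d_m)}\cdots\ul f_1^{(\ul d_1)}$ in $\SX_{\ul\Bh}$ directly: Proposition 2.5 reorders such products into the PBW order but only yields coefficients in $\BA$ and only the bound $\ul\Bc''\ge\ul\Bc$, so pinning down the minimal index and proving the coefficients actually lie in $q\ZZ[q]$ would require genuine work --- precisely the work that passing through ${}^*$ avoids.
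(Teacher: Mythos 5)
Your proposal is correct, and it uses the same two pillars as the paper's proof: Lemma 4.10 and the anti-involution ${}^*$. The difference is in the order and packaging of the intermediate step. You first upgrade $\ul f_1^{(\ul d_1)}\cdots\ul f_m^{(\ul d_m)} = L(\ul\Be,\ul\Bh)$ to a canonical basis element $b(\ul\Be,\ul\Bh)$ (using bar-invariance plus triangularity (1.5.1) and the uniqueness in (1.5.2)), and only then apply ${}^*$; to finish you invoke ${}^*$-stability of the canonical basis (Lusztig) together with Saito's independence of $\bB$ from the reduced sequence. The paper instead applies ${}^*$ first, using the explicit formula $L(\ul\Be,\ul\Bh)^* = L(\ul\Be^*,\ul\Bh^*)$ (with $\ul\Bh^*$ the reversed sequence twisted by $w_0$), so that $\ul f_m^{(\ul d_m)}\cdots\ul f_1^{(\ul d_1)}$ is directly identified as a single PBW element in $\SX_{\ul\Bh^*}$; being manifestly bar-invariant it then equals $b(\ul\Be^*,\ul\Bh^*)$, and Saito's independence converts this to $b(\ul\Bc,\ul\Bh)$. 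So the paper's route is slightly more constructive (it names the resulting PBW element) and avoids invoking the ${}^*$-stability of $\bB$ as a packaged theorem, while yours delegates that work to the Lusztig result. Both are valid; the content and difficulty are essentially the same. Your closing remark about why a direct route via Proposition 2.5 would be harder is also an accurate reading of the situation.
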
  

\begin{proof}
Let ${}^* : \ul\BU_q^- \to \ul\BU_q^-$ be the anti-involution.
By Lemma 4.10, we have  
\begin{equation*}
\ul f_m^{(\ul d_m)}\cdots \ul f_1^{(\ul d_1)}
   = (\ul f_1^{(\ul d_1)}\cdots \ul f_m^{(\ul d_m)})^* = L(\ul\Be, \ul\Bh)^*
\end{equation*}
for some $\Be \in \NN^{\ul N}$.
It is known that $L(\ul\Be, \ul\Bh)^* = L({\ul\Be}^*, \ul\Bh^*)$, where
for $\ul\Bh = (j_1, \dots, j_{\ul N})$, $\ul\Be = (\ul e_1, \dots, \ul e_{\ul N})$,
we set
\begin{equation*}
\ul\Bh^* = (\io(j_{\ul N}), \io(j_{\ul N-1}), \dots, \io(j_1)),
   \quad \ul{\Be}^* = (\ul e_{\ul N}, \ul e_{\ul N-1}, \dots, \ul e_1).  
\end{equation*}
Here $\io$ is an involution on $J$ defined by $\ul w_0(\ul \a_j) = -\ul \a_{\io(j)}$. 
\par
Now $\ul f_m^{(\ul d_m)}\cdots \ul f_1^{(\ul d_1)}$ is a PBW-basis, which is
bar-invariant, hence it coincides with the canonical basis $b(\ul{\Be}^*, \ul\Bh^*)$.
Thus its expansion by the PBW basis $\SX_{\ul\Bh}$ is given, for a certain
$\ul\Bc \in \NN^{\ul N}$  as 
\begin{equation*}
  b(\ul{\Be}^*, \ul\Bh^*)
      = L(\ul\Bc, \ul\Bh) + \sum_{\ul\Bc' > \ul\Bc}a_{\ul\Bc', \ul\Bc}L(\ul\Bc', \ul\Bh)
\end{equation*}
with $a_{\ul\Bc, \ul\Bc'} \in q\ZZ[q]$.  We have $b(\ul\Be^*, \ul\Bh^*) = b(\ul\Bc, \ul\Bh)$.
The corollary is proved. 
\end{proof}  

\para{4.12.}
Let $\Bc = (c_1, \dots, c_N) \in \NN^N$ be such that $\sum_k c_k\b_k = \sum_id_i \a_i$
with $\Bd = (d_1, \dots, d_n)$.  Then $\SO_{\Bc} \subset \BE_{\BV}$.  We define
$\Bd^k = (d_1^k, \dots, d_n^k) \in \NN^n$ for $k \in [1, N]$ associated to $\Bc$ as in 4.6.
The following formula was proved in \cite[Cor. 6.9]{L-can}.
\begin{equation*}
\tag{4.12.1}  
  \d(\Bc) = \dim \BE_{\BV} - \dim \SO_{\Bc} = -\sum_{h < k; i}d_i^hd_i^k
                  + \sum_{h < k ; i \to j}d_j^hd_i^k.
\end{equation*}

By (4.12.1), it is easy to see that $\d(\Bc_k) = 0$ for any $k$.  Thus by Lemma 4,5,
we have
\begin{equation*}
\tag{4.12.2}  
F(\Bd^k) = b(\Bc_k, \Bh).
\end{equation*}  

\par
We consider a generalization of (4.12.2) to the case where $\Bc$ has more than
one non-zero coordinates, in connection with the folding theory.
Take $\Bc \in \NN^N$. 
For each $k \in [1, \ul N]$, define $\wt\Bc_k \in \NN^N$ by 
\begin{equation*}
\tag{4.12.3}  
  \wt\Bc_k  = (0, \dots, 0,
     \underbrace{c_s, c_{s+1}, \dots, c_t}_{j_k\text{-part}}, 0, \dots, 0). 
\end{equation*}  
We define $\wt\Bd^k = (\wt d_1^k, \dots, \wt d_n^k) \in \NN^n$ as $\Bd^k$ is
defined from $\Bc$, by the condition that
$\sum_{k' = s}^t c_{k'}\b_{k'} = \sum_{i = 1}^n\wt d_i^k\a_i$, where
$j_k = \{ s, s+1, \dots, t\}$. 
Then we have a lemma.

\begin{lem}  
Under the notation above, we have
$F(\wt\Bd^k) = b(\wt\Bc_k, \Bh)$. 
\end{lem}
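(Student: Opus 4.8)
The plan is to mimic the proof of (4.12.2), with the single root $\b_k$ there replaced by the block of roots forming the $\ul\b_k$-part of $\vD^+$.

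First, $F(\wt\Bd^k)$ is bar-invariant: it is a product of divided powers $f_i^{(d)}$, each bar-invariant (since $[d]_i^!\in\BA$ is bar-invariant), and the bar involution is a ring homomorphism. Next, apply Lemma 4.5 to the dimension vector $\wt\Bd^k$, i.e.\ to $\BE_{\BV}$ with $\dim\BV=\wt\Bd^k$, so that $\sum_i\wt d_i^k\a_i=\sum_{k'=s}^t c_{k'}\b_{k'}$ with $j_k=\{s,\dots,t\}$. This gives
\begin{equation*}
F(\wt\Bd^k)=\sum_{\Bc}q^{\d(\Bc)}L(\Bc,\Bh),
\end{equation*}
the sum over $\Bc\in\NN^N$ with $\SO_{\Bc}\subset\BE_{\BV}$, with $\d(\Bc)=0$ for exactly one $\Bc$. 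Granting momentarily that $\d(\wt\Bc_k)=0$, that unique index is $\Bc=\wt\Bc_k$; every other $\Bc$ in the sum then satisfies $\dim\SO_{\Bc}=\dim\BE_{\BV}-\d(\Bc)<\dim\BE_{\BV}=\dim\SO_{\wt\Bc_k}$, hence $\wt\Bc_k<\Bc$ by (4.8.1), with $q^{\d(\Bc)}\in q\ZZ[q]$. Thus $F(\wt\Bd^k)=L(\wt\Bc_k,\Bh)+\sum_{\Bc>\wt\Bc_k}q^{\d(\Bc)}L(\Bc,\Bh)$ with all coefficients in $q\ZZ[q]$; being bar-invariant, it must equal $b(\wt\Bc_k,\Bh)$ by (1.5.2).

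The real content is the claim $\d(\wt\Bc_k)=0$. The pieces $\Bd^h$ of $\wt\Bc_k$ (in the sense of 4.6) are $c_h\b_h$ for $h\in j_k$ and $0$ otherwise, so (4.12.1) gives
\begin{equation*}
\d(\wt\Bc_k)=\sum_{s\le h<\ell\le t}\Bigl(-\sum_i d_i^h d_i^\ell+\sum_{i\to j}d_j^h d_i^\ell\Bigr)=-\sum_{s\le h<\ell\le t}c_hc_\ell\,\langle\b_\ell,\b_h\rangle_{\Om},
\end{equation*}
where $\langle x,y\rangle_{\Om}=\sum_i x_iy_i-\sum_{i\to j}x_iy_j$ is the Euler form of $(I,\Om)$ and a dimension vector is identified with the corresponding element of $Q_+$. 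So it suffices to show $\langle\b_\ell,\b_h\rangle_{\Om}=0$ for $h\ne\ell$ in the block $j_k$. Here the $\ul\b_k$-part $\b_s,\dots,\b_t$ of $\vD^+$ (see (3.8.1)) consists exactly of the roots $w_{j_1}\cdots w_{j_{k-1}}(\a_i)$ with $i\in j_k$, and since $\Bh$ is adapted to $\Om$ the vertices of $j_k$ are all sinks of $\Om_s=s_{i_{s-1}}\cdots s_{i_1}\Om$ (reflecting at the pairwise non-adjacent vertices of $j_k$ does not disturb one another). The BGP reflection functors then carry $M_{\b_s},\dots,M_{\b_t}$ to the simple $(I,\Om_s)$-modules at the vertices of $j_k$, preserving $\Hom$ and $\Ext^1$, i.e.\ the reflections preserve the Euler matrix; hence for $h\ne\ell$ in $j_k$,
\begin{equation*}
\langle\b_\ell,\b_h\rangle_{\Om}=\langle\a_{i_\ell},\a_{i_h}\rangle_{\Om_s}=\d_{i_\ell,i_h}-\#\{\,i_\ell\to i_h\ \text{in}\ \Om_s\,\}=0,
\end{equation*}
because distinct vertices of one $\s$-orbit are non-adjacent in the Dynkin diagram. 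When $|j_k|=1$ there are no such pairs and the assertion is just (4.12.2); alternatively one may verify $\langle\b_\ell,\b_h\rangle_{\Om}=0$ case by case from the explicit $\Bh,\ul\Bh,\Om$ listed in 4.3.

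The main obstacle is this last step: identifying the indecomposables attached to a block, via the reflection-functor chain, with simple modules at mutually non-adjacent vertices, so that all their Euler-form pairings vanish. Orthogonality of $\b_h,\b_\ell$ by itself is not enough (it only yields $\langle\b_h,\b_\ell\rangle_{\Om}+\langle\b_\ell,\b_h\rangle_{\Om}=0$); one really needs the block structure built into the choice of $\Bh,\ul\Bh$ in 4.3 together with adaptedness to $\Om$. The rest --- Lemma 4.5, (4.12.1), (4.8.1) and the characterization (1.5.2) of the canonical basis --- is routine.
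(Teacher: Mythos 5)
Your argument is correct, but it proceeds by a genuinely different route from the paper. Both proofs reduce to showing $\d(\wt\Bc_k)=0$ via Lemma 4.5, (4.8.1), and the characterization (1.5.2); the paper then verifies $\d(\wt\Bc_k)=0$ by explicit case-by-case computation of the two sums $A=\sum_i d_i^h d_i^\ell$ and $B=\sum_{i\to j}d_j^h d_i^\ell$ from tables of roots and dimension vectors for each of $A_{2n-1}$, $D_n$, $E_6$, $D_4$ (with the $E_6$ verification left to the reader). You instead give a type-free argument: rewrite (4.12.1) as $-\sum_{s\le h<\ell\le t}c_h c_\ell\langle\b_\ell,\b_h\rangle_\Om$ in terms of the Euler form of $(I,\Om)$, use adaptedness to see that the initial string of reflections $s_{i_1},\dots,s_{i_{s-1}}$ (each at a sink) carries the block $\b_s,\dots,\b_t$ to the simple roots at the pairwise non-adjacent vertices $i_s,\dots,i_t$ in $\Om_s$, and conclude the off-diagonal Euler pairings vanish. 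The key input $\langle s_i\alpha,s_i\beta\rangle_{s_i\Om}=\langle\alpha,\beta\rangle_\Om$ for $i$ a sink is correct and is in fact a short direct bilinear-form identity; invoking BGP reflection functors as you do is sound but slightly more than is needed. Your approach buys uniformity and a conceptual explanation of why the choices in 4.3 (adaptedness of $\Bh$ together with the $\s$-orbit block structure) force the vanishing, whereas the paper's explicit verification also records the root data that it reuses in the examples of Section 7.
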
  

\begin{proof}
By Lemma 4.5, $F(\wt\Bd^k)$ is written as a linear combination of
$L(\Bc, \Bh)$ with coefficient $q^{\d(\Bc)}$.  Thus in order to prove the lemma,
it is enough to show that $\d(\wt\Bc_k) = 0$. 
By applying (4.12.1), we shall compute $\d(\wt\Bc_k)$ for each case.
We follow the notation in 4.3.
If $|j_k| = 1$, the result follows from  (4.12.2).  Hence we assume that
$|j_k| = 2$ or $|j_k| = 3$. We follow the notation in 4.3. 
\par\medskip
(A) \ The case $A_{2n-1}$. Here we have $I = \{1,2, \dots, n-1, n, (n-1)',\dots, 2',1'\}$,   
and $I_0 = \{ 1,1',3,3',\dots\}, I_1 = \{ 2,2',4,4'\dots \}$.  
Assume that $j_k = \{ i_h, i_{h+1}\}$, and that
$\wt\Bc_k = (0, \dots,0, c_h, c_{h+1},0, \dots, 0)$.
We consider $\a_{i_h} = \a_m, \a_{i_{h+1}} = \a_{m'}$ for some $m \in [1, n-1]$.
We have
\begin{align*}
  \b_h = s_{i_1}\cdots s_{i_{h-1}}(\a_m), \qquad
  \b_{h+1} = s_{i_1}\cdots s_{i_{h-1}}(\a_{m'}).
\end{align*}
For $c_0 = s_1s_{1'}s_3s_{3'}\cdots, c_1 = s_2s_{2'}s_4s_{4'}\cdots$, 
\begin{align*}
s_{i_1}\cdots s_{i_{h-1}} &=  c_0c_1\cdots (s_{m-4}s_{(m-4)'})(s_{m-2}s_{(m-2)'}).
\end{align*}
By a direct computation, $\b_h$ and $\b_{h+1}$ can be expressed, either 
\begin{align*}
\tag{4.13.1}
  \b_h &= \a_{n-a} + \a_{n-a+1} + \cdots + \a_{n-1} + \a_n   \\
             &\qquad +  \a_{(n-1)'} + \a_{(n-2)'} + \cdots + \a_{(n-b)'}, \\
 \b_{h+1} &= \a_{(n-a)'} + \a_{(n-a+1)'} + \cdots + \a_{(n-1)'} + \a_n  \\
              &\qquad   + \a_{n-1} + \a_{n-2} + \cdots + \a_{n-b}             
\end{align*}
for some integers $a > b \ge 0$, or 
\begin{align*}
\tag{4.13.2}  
\b_h &=  \a_{n-a} + \a_{n-a+1} + \cdots + \a_{n-b},  \\ 
\b_{h+1} &= \a_{(n-a)'} + \a_{(n-a+1)'} + \cdots + \a_{(n-b)'} 
\end{align*}
for some integers $a > b \ge 1$. 
\par
In the case (4.13.2), it is easy to see that $\d(\wt\Bc_k) = 0$.
So we consider the case (4.13.1).  In this case, we have
\begin{align*}
\tag{4.13.3}  
  d_i^h &=  d_{i'}^h = c_h \text{ for } i = n-b, n-b+1, \cdots, n-1, \\
  d_i^{h+1} &= d_{i'}^{h+1} = c_{h+1} \text { for } i = n-b, n-b+1, \cdots, n-1, \\ 
  d_n^h &= c_h, \quad d_n^{h+1} = c_{h+1}, \\ 
d_i^h &= c_h \text{ for } i = n-a, n-a+1, \dots, n-b-1, \\
d_{i'}^{h+1} &= c_{h+1} \text{ for } i = n-a, n-a+1, \dots, n-b-1,
\end{align*}  
and all other elements are equal to 0.

Thus we have
\begin{align*}
\sum_id_i^hd_i^{h+1}
     &= \sum_{1 \le i \le b}d_{n-i}^hd_{n-i}^{h+1} + d_n^hd_n^{h+1}
                + \sum_{1 \le i \le b}d_{(n-i)'}^hd_{(n-i)'}^{h+1}  = c_hc_{h+1}(2b + 1),  \\
\sum_{i \to j}d_j^hd_i^{h+1}
        &= \sum_{1 \le i \le b}d_{n-i+1}^xd_{n-i}^{x'}
               + \sum_{1 \le i \le b}d_{(n-i+1)'}^xd_{(n-i)'}^{x'} + z = c_hc_{h+1}(2b + 1),
\end{align*}
where $\{ x,x'\} = \{ h, h+1\}$, and the additional term $z$ is given by
\begin{equation*}
z = \begin{cases}
        d_{n-t-1}^hd_{n-t}^{h+1} = c_hc_{h+1} &\quad\text{ if } \ n-t \to n-t-1, \\
        d_{(n-t)'}^hd_{(n-t-1)'}^{h+1} = c_hc_{h+1}  &\quad\text { if }
                          \ (n-t-1)' \to (n-t)'.
    \end{cases}
\end{equation*}
(Here we set $n' = n$.)  Hence by (4.12.1), we have $\d(\wt\Bc_k) = 0$.

\par\medskip
(B) The case $D_n$. \ 
Here $I = \{ 1,2, \dots, n-2, n-1, n\}$,
and $I_0 = \{ 1,3,5,\dots\}, I_1 = \{ 2,4,\dots\}$.
Assume that $j_k = \{ i_h, i_{h+1} \}$, and 
$\wt\Bc_k = (0, \dots, 0, c_h, c_{h+1}, 0, \dots, 0)$.
We set $\a_{i_h} = \a_{n-1}, \a_{i_{h+1}} = \a_n$. 
Then $\b_h$ and $\b_{h+1}$
are given by
\begin{align*}
  \b_h = s_{i_1}s_{i_2}\cdots s_{i_{h-1}}(\a_{n-1}),  \qquad 
  \b_{h+1} = s_{i_1}s_{i_2}\cdots s_{i_{h-1}}(\a_n).
\end{align*}
Here $s_{i_1}\cdots s_{i_{h-1}}(\a_n) = c_0c_1\cdots s_{n-5}s_{n-3}(\a_n)$,
and $s_{i-1}\cdots s_{i_{h-1}}(\a_{n-1})$ is given similarly,
where $c_0 = s_1s_3 \cdots$ and $c_1 = s_2s_4\cdots$.  Then by a direct computation, we have
\begin{align*}
\tag{4.13.4}  
\b_h &= \a_m + \a_{m+1} + \cdots + \a_{n-2} + \a_{n-1},  \\
\b_{h+1} &= \a_m + \a_{m+1} + \cdots + \a_{n-2} + \a_n
\end{align*}
for some $m \in [1, n-2]$, (or $\a_{n-1}$ and $\a_n$ are exchanged).  
\par
It follows from (4.13.4) that
\begin{align*}
\tag{4.13.5}  
d_{n-1}^h &= c_h, \quad d_n^{h+1} = c_{h+1},  \\
d_i^h &= c_h, \quad d_i^{h+1} = c_{h+1} \quad\text{ for }\quad i = m, m+1, \dots,n-2, 
\end{align*}
and other elements are all zero. 
This implies that
\begin{align*}
  \sum_id_i^hd_i^{h+1} &= d_m^hd_m^{h+1} + d_{m+1}^hd_{m+1}^{h+1} +
                           \cdots + d_{n-2}^hd_{n-2}^{h+1} = c_hc_{h+1}(n -m -1), \\
 \sum_{i \to j}d_j^hd_i^{h+1} &= d_m^{x}d_{m+1}^{x'} + d_{m+1}^xd_{m+2}^{x'}
     + \cdots + d_{n-3}^xd_{n-2}^{x'} + z  = c_hc_{h+1}(n-m -1),                          
\end{align*}
where $\{x,x'\} = \{ h, h+1\}$, and the additional term $z$ is given by
\begin{equation*}
z =  \begin{cases}  
  d_{n-1}^hd_{n-2}^{h+1} = c_hc_{h+1} &\quad\text{ if $n-2 \to n-1$},  \\  
  d_{n-2}^h d_n^{h+1} = c_hc_{h+1}    &\quad\text{ if $n \to n-2$}.
     \end{cases}
\end{equation*}
Hence by (4.12.1), $\d(\wt\Bc_k) = 0$. 
\par\medskip
(D) \ The case $D_4$. 
Here $I = \{ 1,1',1'', 2\}$, and $I_0 = \{ 1,1',1''\}, I_1 = \{ 2\}$.
The orientation of the quiver is given as in (4.3.11).
Here $\ul N = 6$ and $N = 12$.
As in 4.3 (D), $\Bh$ and $\ul\Bh$ are given as
\begin{equation*}
  \Bh = (1,1',1'',2,1,1',1'',2,1,1',1'',2), \qquad
  \ul\Bh = (\ul 1, \ul 2, \ul 1, \ul 2, \ul 1, \ul 2).
\end{equation*}

The total order of $\vD^+$ 
with respect to $\Bh$ is given by $\vD^+ = \{ \b_1, \dots, \b_{12}\}$, where
\begin{equation*}
\begin{aligned}
  \b_1 &= 1, &\quad \b_2 &= 1', &\quad \b_3 &= 1'', &\quad \b_4 &= 11'1''2, \\
  \b_5 &= 1'1''2, &\quad \b_6 &= 11''2, &\quad
  \b_7 &= 11'2, &\quad \b_8 &= 11'1''22, \\
  \b_9 &=  12, &\quad \b_{10} &= 1'2, &\quad \b_{11} &=  1''2, &\quad \b_{12} &=  2. 
\end{aligned}
\end{equation*}
For $\Bc = (c_1, \dots, c_{12})$, we define $\wt\Bc_k \in \NN^{12}$ by  
\begin{align*}
\wt \Bc_1 &= (c_1, c_2, c_3, 0, \dots, 0), \\
\wt\Bc_2 &= (0,0,0, c_4, 0, \dots, 0),   \\  
\wt \Bc_3 &= (0, \dots, 0, c_5, c_6, c_7, 0, \dots 0), \\
\wt\Bc_4  &= (0, \dots, 0, c_8, 0, \dots, 0),  \\
\wt \Bc_5 &= (0, \dots, 0, c_9, c_{10}, c_{11}, 0),  \\
\wt\Bc_6  &= (0, \dots, 0, c_{12}).
\end{align*}
For $k = 2,4,6$, we already know that $\d(\wt\Bc_k) = 0$.
Hence consider the case where $k = 1, 3, 5$.
Set $A = \sum_{h < k, i}d_i^hd_i^k, B = \sum_{h < k; i \to j}d_j^hd_i^k$
so that $\d(\Bc) = -A + B$.
\par
First assume that $\Bc = \wt\Bc_1$ (the case $k = 1$).  Then we have
\begin{equation*}
\Bd^1 = (c_1, 0, 0, 0), \quad \Bd^2 = (0, c_2, 0, 0), \quad \Bd^3 = (0,0,c_3,0)
\end{equation*}
and $\Bd^k = 0$ for $k = 4, \dots, 12$.
If $h < k$, then we have $d_i^hd_i^k = 0$ for $i = 1,1',1''$, and $d_i^k = 0$ for $i = 2$. 
Hence $A = 0$. 
The latter condition also implies that $d_j^hd_i^k = 0$ for $i \to j$ with $i = 2$, and
$B = 0$. Thus we have $\d(\Bc) = 0$. 
\par
Next assume that $\Bc = \wt\Bc_3$ (the case $k = 3$).
Then we have 
\begin{equation*}
  \Bd^5 = (0, c_5, c_5, c_5), \quad \Bd^6 = (c_6,0, c_6, c_6), \quad
  \Bd^7 = (c_7, c_7, 0, c_7),
\end{equation*}  
and $\Bd^k = 0$ for all other $k$.
We have
\begin{align*}
 A
  &=  d_1^5d_1^6 + d_{1'}^5d_{1'}^6 + d_{1''}^5d_{1''}^6 + d_2^5d_2^6  \\
  &\quad + d_1^5d_1^7 + d_{1'}^5d_{1'}^7 + d_{1''}^5d_{1''}^7 + d_2^5d_2^7 \\
  &\quad + d_1^6d_1^7 + d_{1'}^6d_{1'}^7 + d_{1''}^6d_{1''}^7 + d_2^6d_2^7  \\
  &= 2c_5c_6 + 2c_5c_7 + 2c_6c_7,  \\
B  &= (d_1^5d_2^6 + d_{1'}^5d_2^6 + d_{1''}^5d_2^6) \\
        &\quad + (d_1^5d_2^7 + d_{1'}^5d_2^7 + d_{1''}^5d_2^7) \\
        &\quad + (d_1^6d_2^7 + d_{1'}^6d_2^7 + d_{1''}^6d_2^7)  \\
        &= 2c_5c_6 + 2c_5c_7 + 2c_6c_7.
\end{align*}  
Hence $\d(\Bc) = 0$. 
\par
Finally assume that $\Bc = \wt\Bc_5$ (the case $k = 5$).  Then we have
\begin{equation*}
  \Bd^9 = (c_9, 0, 0, c_9), \quad \Bd^{10} = (0, c_{10}, 0, c_{10}),
                            \quad \Bd^{11} = (0,0, c_{11}, c_{11}),
\end{equation*}
and $\Bd^k = 0$ for all other $k$. 
We have

\begin{align*}
  A &= d_1^9d_1^{10} + d_{1'}^9d_{1'}^{10} + d_{1''}^9d_{1''}^{10}
                                 + d_2^9d_2^{10}  \\
    &\quad + d_1^9d_1^{11} + d_{1'}^9d_{1'}^{11} + d_{1''}^9d_{1''}^{11}
                                 + d_2^9d_2^{11}  \\                             
    &\quad + d_1^{10}d_1^{11} + d_{1'}^{10}d_{1'}^{11} + d_{1''}^{11}d_{1''}^{11}
                                 + d_2^{10}d_2^{11} \\                             
    &= c_9c_{10} + c_9c_{11} + c_{10}c_{11},  \\
B  &= d_1^9d_2^{10} + d_{1'}^9d_2^{10} + d_{1''}^9d_2^{10} \\
        &\quad + d_1^9d_2^{11} + d_{1'}^9d_2^{11} + d_{1''}^9d_2^{11} \\
        &\quad + d_1^{10}d_2^{11} + d_{1'}^{10}d_2^{11} + d_{1''}^{10}d_2^{11}  \\
        &= c_9c_{10} + c_9c_{11} + c_{10}c_{11}.
\end{align*}
Hence $\d(\Bc) = 0$.
\par
Thus we have proved that $\d(\wt\Bc_k) = 0$ for any $k \in [1, 6]$. 
\par\medskip
(C) \ The case $E_6$.  
We have $I = \{ 1,1',3,2,2',4\}$ in this order, and
$I_0 = \{ 1,1',3\}, I_1 = \{ 2,2',4\}$.
Let $c = s_1s_{1'}s_3s_2s_{2'}s_4$ be a Coxeter element in $W$, and
$\ul c = s_{\ul 1}s_{\ul 3}s_{\ul 2}s_{\ul 4}$ a Coxeter element in $\ul W$.
The Coxeter number $h = 12$, and $N = 36, \ul N = 24$.  We consider a reduced sequence
$\Bh$ of $w_0 \in W$,
\begin{equation*}
  \Bh = (\underbrace{1,1',3,2,2',4}_{c}, \dots, \underbrace{1,1',3,2,2',4}_{c})
                \qquad  \text{(6-times,)}
\end{equation*}
and a reduced sequence $\ul\Bh$ of $\ul w_0 \in \ul W$,
\begin{equation*}
  \ul\Bh = (\underbrace{\ul1, \ul 3, \ul 2, \ul 4}_{\ul c},
                \dots, \underbrace{\ul 1, \ul 3, \ul 2, \ul 4}_{\ul c})  \qquad
                \text{(6-times)}.
\end{equation*}
According to $\Bh$, we have a total order of $\vD^+ = \{\b_1, \dots, \b_{36}\}$,
which is given by

{\footnotesize
\begin{equation*}
\begin{aligned}
\b_1 &= 1, &\quad \b_2 &= 1', &\quad \b_3 &= 3, &\quad \b_4 &= 132, \\
\b_5 &= 1'32', &\quad \b_6 &= 34, &\quad \b_7 &= 32, &\quad \b_8 &= 32', \\
\b_9 &= 11'3^222'4, &\quad \b_{10} &= 1'3^222'4, &\quad  \b_{11} &= 13^222'4,
     &\quad \b_{12} &= 11'322', \\
\b_{13} &= 1'32'4, &\quad \b_{14} &= 1324, &\quad \b_{15} &= 11'3^32^22'^24,
      &\quad \b_{16} &= 11'3^222'^24, \\
\b_{17} &= 11'3^22^22'4, &\quad \b_{18} &= 3^222'4, &\quad \b_{19} &= 1322',
      &\quad \b_{20} &= 1'322', \\
\b_{21} &= 11'3^32^22'^24^2, &\quad \b_{22} &= 13^22^22'4, &\quad \b_{23} &= 1'3^222'^24, 
      &\quad \b_{24} &= 11'322'4, \\
\b_{25} &= 324, &\quad \b_{26} &= 32'4, &\quad \b_{27} &= 11'3^22^22'^24, 
       &\quad \b_{28} &= 1'322'4, \\
\b_{29} &= 1322'4, &\quad \b_{30} &= 322', &\quad \b_{31} &= 1'2',
       &\quad \b_{32} &= 12, \\
\b_{33} &= 322'4, &\quad \b_{34} &= 2', &\quad \b_{35} &= 2, &\quad \b_{36} &= 4.
\end{aligned}
\end{equation*}
}

Let $\Bc = (c_1, \dots, c_{36}) \in \NN^{36}$,
and consider $\wt\Bc_1, \dots, \wt\Bc_{24}$, where
{\footnotesize
\begin{equation*}
\begin{aligned}
\wt\Bc_1 &= (c_1, c_2, 0, \dots, 0), &\quad
         \wt\Bc_{13} &= (0, \dots, 0, c_{19}, c_{20}, 0, \dots, 0),  \\
\wt\Bc_2 &= (0,0, c_3, 0, \dots, 0), &\quad
         \wt\Bc_{14} &= (0, \dots, 0, c_{21}, 0, \dots, 0),  \\
\wt\Bc_3 &= (0,0,0, c_4, c_5, 0, \dots, 0), &\quad
         \wt\Bc_{15} &= (0, \dots, 0, c_{22}, c_{23}, 0, \dots, 0),  \\
\wt\Bc_4 &= (0, \dots, 0, c_6, 0, \dots, 0), &\quad
         \wt\Bc_{16} &= (0, \dots, 0, c_{24}, 0, \dots, 0),  \\
\wt\Bc_5 &= (0,\dots, 0, c_7, c_8, 0, \dots, 0), &\quad
          \wt\Bc_{17} &= (0, \dots, 0, c_{25}, c_{26}, 0, \dots, 0),  \\
\wt\Bc_6 &= (0, \dots, 0, c_9, 0, \dots, 0), &\quad
          \wt\Bc_{18} &= (0, \dots, 0, c_{27}, 0, \dots, 0),  \\
\wt\Bc_7 &= (0, \dots, 0, c_{10}, c_{11}, 0, \dots, 0), &\quad
          \wt\Bc_{19} &= (0, \dots, 0, c_{28}, c_{29}, 0, \dots, 0), \\
\wt\Bc_8 &= (0, \dots, 0, c_{12}, 0, \dots, 0), &\quad
          \wt\Bc_{20} &= (0, \dots, 0, c_{30}, 0, \dots, 0),  \\
\wt\Bc_9 &= (0, \dots, 0, c_{13}, c_{14}, 0, \dots, 0), &\quad
           \wt\Bc_{21} &= (0, \dots, 0, c_{31}, c_{32}, 0, \dots, 0),  \\
\wt\Bc_{10} &= (0, \dots, 0, c_{15}, 0, \dots, 0),  &\quad
           \wt\Bc_{22} &= (0, \dots, 0, c_{33}, 0, 0, 0), \\
\wt\Bc_{11} &= (0, \dots, 0, c_{16}, c_{17}, 0, \dots, 0), &\quad
           \wt\Bc_{23} &= (0, \dots, 0, c_{34}, c_{35}, 0),  \\
\wt\Bc_{12} &= (0, \dots, 0, c_{18}, 0, \dots, 0), &\quad
             \wt\Bc_{24} &= (0, \dots, 0, c_{36}). 
\end{aligned}
\end{equation*}
}

We consider $\Bc \in \NN^{36}$.  If $\Bc$ is of the form
$(0, \dots, 0, c_s, 0, \dots, 0)$,
we know already that $\d(\Bc) = 0$.  So, we assume that 
$\Bc$ is of the from $(0,\dots,0, c_s, c_{s+1}, 0, \dots,0)$.  In this case, $\Bd^k = 0$
unless $k = s, s+1$.  We compute
$-\sum_{h< k, i}d_i^hd_i^k + \sum_{h< k; i \to j}d_j^hd_i^k$,
in this sum, we may assume that $h = s, k = s+1$.
We compute $A = \sum_id_i^hd_i^k$ and $B = \sum_{i \to j}d_j^hd_i^k$, separately,
as in the case (D). 
The pair $(i,j)$ such that $i \to j$ is given by
$(2,1), (2,3), (2',3), (2',1'), (4,3)$, hence we have
\begin{equation*}
B = \sum_{i \to j}d_j^hd_i^k
  = d_1^hd_2^k + d_3^hd_2^k + d_3^hd_{2'}^k + d_{1'}^hd_{2'}^k + d_3^hd_4^k. 
\end{equation*}  

We need to show that $\d(\wt\Bc_k) = 0$ for
$k = 1,3,5, \dots, 21, 23$.  
This is done by a similar computation as in the case (D), once the tables for
$\b_k$, and $\wt\Bc_k$ are given. So we omit the details.
\par
By (A), (B), (C), (D), the lemma is proved.
\end{proof}

\para{4.14.}
Let $\ul\Bc = (\ul c_1, \dots, \ul c_{\ul N}) \in \NN^{\ul N}$.
For each $k \in [1, \ul N]$, we define $\ul\Bc_k \in \NN^{\ul N}$, similarly to
$\Bc_k \in \NN^N$ in 3.6,
by the condition
that the $k$-th coordinate of $\ul \Bc$ coincides with $\ul c_k$, and all other coordinates are 0.
We define $\ul\Bd^k = (\ul d^k_1, \dots, \ul d^k_m) \in \NN^m$ in a similar way
as $\Bd^k$ in 4.6. 
Hence we have $\ul c_k\ul\b_k = \sum_{j=1}^m\ul d^k_j\a_j$ for $k \in [1, \ul N]$. 
We define
\begin{equation*}
\tag{4.14.1}  
F((\ul\Bc)) = F(\ul \Bd^1)F(\ul \Bd^2) \cdots F(\ul \Bd^{\ul N}),
\end{equation*}
where
\begin{equation*}
\tag{4.14.2}  
F(\ul\Bd^k) = \ul f_{m}^{(\ul d_m^k)}\ul f_{m-1}^{(\ul d_{m-1}^k)}
                      \cdots \ul f_1^{(\ul d_1^k)}. 
\end{equation*}

We apply Corollary 4.11 for the case where $\ul\Bd = \ul\Bd^k$.
We show

\begin{lem}  
The expansion of $F(\ul \Bd^k)$ in terms of $\SX_{\ul\Bh}$ is given as 
\begin{equation*}  
  F(\ul \Bd^k) = L(\ul\Bc_k, \ul\Bh) + \sum_{\ul\Bc' >  \ul\Bc_k}
                   a_{\ul\Bc',\ul\Bc_k}L(\ul\Bc',\ul\Bh)
\end{equation*}
with $a_{\ul\Bc',\ul\Bc_k} \in q\ZZ[q]$.  Hence
$F(\ul \Bd^k)$ coincides with $b(\ul\Bc_k, \ul\Bh)$.
\end{lem}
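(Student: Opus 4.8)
The plan is to deduce the statement from Corollary 4.11 together with a reduction modulo $p$ through the folding isomorphism $\Phi$ of Theorem 3.3. Applying Corollary 4.11 to $\ul\Bd = \ul\Bd^k$ already produces a unique $\ul\Bc \in \NN^{\ul N}$ with
\[
F(\ul\Bd^k) = L(\ul\Bc,\ul\Bh) + \sum_{\ul\Bc' > \ul\Bc}a_{\ul\Bc',\ul\Bc}L(\ul\Bc',\ul\Bh), \qquad a_{\ul\Bc',\ul\Bc}\in q\ZZ[q],
\]
and $F(\ul\Bd^k) = b(\ul\Bc,\ul\Bh)$; so the whole lemma comes down to showing $\ul\Bc = \ul\Bc_k$.

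To this end I would first transport $F(\ul\Bd^k)$ to the symmetric side. Let $\Bc^{\circ}\in\NN^{N,\s}$ be the $\s$-fixed vector corresponding to $\ul\Bc_k$ under $\NN^{\ul N}\isom\NN^{N,\s}$; thus $\Bc^{\circ}$ is supported on the $j_k$-part of $\vD^+$ and all of its entries there equal $\ul c_k$. By the folding setup in 3.8, $\ul\b_k$ regarded in $Q^{\s}=\ul Q$ equals the sum of the roots in the $j_k$-part, so comparing $\ul c_k\ul\b_k = \sum_{j}\ul d^k_j\a_j = \sum_j\ul d^k_j\sum_{i\in j}\a_i$ with the definition of $\wt\Bd^k$ attached to $\Bc^{\circ}$ gives $\wt d^k_i = \ul d^k_j$ whenever $i\in j$; in particular $\wt\Bd^k$ is constant on each $\s$-orbit, so $F(\wt\Bd^k)$ is $\s$-stable. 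Using that the generators $f_i$ for $i$ in a common $\s$-orbit commute, and that by the choice of $\Bh$ and $\ul\Bh$ in 4.3 each $\s$-orbit occupies a consecutive block of the chosen total order on $I$ (so that the total order on $J$ lists these blocks in the order they occur in $I$), the product $\wt f_m^{(\ul d^k_m)}\cdots\wt f_1^{(\ul d^k_1)}$ reorganizes exactly into $F(\wt\Bd^k)$. Since $\Phi$ is an algebra homomorphism with $\Phi(\ul f_j^{(a)}) = \pi(\wt f_j^{(a)})$, this yields $\Phi(F(\ul\Bd^k)) = \pi(F(\wt\Bd^k))$ in $\BV_q$.

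Now Lemma 4.13 applied to $\Bc^{\circ}$ (and the index $k$) gives $F(\wt\Bd^k) = b(\Bc^{\circ},\Bh)$, a $\s$-stable canonical basis element, and Proposition 3.7(iv) identifies $\pi(b(\Bc^{\circ},\Bh)) = \Phi(b(\ul\Bc_k,\ul\Bh))$ because $\Bc^{\circ}$ corresponds to $\ul\Bc_k$. Hence $\Phi(F(\ul\Bd^k)) = \Phi(b(\ul\Bc_k,\ul\Bh))$, and since $\Phi$ is an isomorphism, $F(\ul\Bd^k) = b(\ul\Bc_k,\ul\Bh)$ in ${}_{\BA'}\ul\BU_q^-$, i.e.\ modulo $p$. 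Combined with the exact equality $F(\ul\Bd^k) = b(\ul\Bc,\ul\Bh)$ from Corollary 4.11, this forces the images of $b(\ul\Bc,\ul\Bh)$ and $b(\ul\Bc_k,\ul\Bh)$ in ${}_{\BA'}\ul\BU_q^-$ to coincide; since the canonical basis is an $\BA$-basis of ${}_{\BA}\ul\BU_q^-$, its image in ${}_{\BA'}\ul\BU_q^- = \BA'\otimes_{\BA}{}_{\BA}\ul\BU_q^-$ is an $\BA'$-basis, so two of its members with equal image are equal; therefore $\ul\Bc = \ul\Bc_k$. The displayed expansion is then Corollary 4.11 for $\ul\Bc = \ul\Bc_k$.

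The step I expect to be the main obstacle is the bookkeeping in the second paragraph: verifying that $\wt f_m^{(\ul d^k_m)}\cdots\wt f_1^{(\ul d^k_1)}$ really equals $F(\wt\Bd^k)$. This relies on the precise constructions of $\Bh$ and $\ul\Bh$ in 4.3 — namely that each $\s$-orbit is a consecutive block in the chosen total order on $I$ and that these blocks appear in $J$ in the corresponding order — together with the weight identity $\wt d^k_i = \ul d^k_j$ for $i\in j$. Once the equality $\Phi(F(\ul\Bd^k)) = \pi(F(\wt\Bd^k))$ is secured, the remainder is a formal consequence of Lemma 4.13, Proposition 3.7(iv), Corollary 4.11, and the $\BA$-freeness of the canonical basis.
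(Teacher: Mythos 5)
Your proposal is correct and follows the same overall route as the paper: reduce to identifying the leading term via Corollary 4.11, transport $F(\ul\Bd^k)$ to the symmetric side through $\Phi$, and invoke Lemma 4.13. The one place you diverge is in the final identification. The paper applies $\pi$ to the PBW expansion (4.15.2), uses only the PBW compatibility of Proposition 3.7(iii), and reads off that the coefficient of $L(\ul\Bc_k,\ul\Bh)$ reduces to $1$ modulo $p$, hence cannot lie in $q\ZZ[q]$, which forces $\ul\Bc_k$ to be the leading term. You instead invoke Proposition 3.7(iv) (the canonical-basis compatibility) to conclude $\Phi(F(\ul\Bd^k)) = \Phi(b(\ul\Bc_k,\ul\Bh))$, and then use that the canonical basis remains an $\BA'$-basis of ${}_{\BA'}\ul\BU_q^- = \BA'\otimes_{\BA}{}_{\BA}\ul\BU_q^-$, so two basis elements that agree mod $p$ must be equal. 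Your version is marginally cleaner at the end and sidesteps having to argue that the non-$\s$-fixed PBW terms in (4.15.2) collapse in $\BV_q$ (which is implicit in the paper's passage from (4.15.2) to (4.15.3)), at the cost of leaning on the stronger input Proposition 3.7(iv). Your bookkeeping in the second paragraph — checking that $\wt\Bd^k$ is constant on $\s$-orbits so that $\wt f_m^{(\ul d^k_m)}\cdots \wt f_1^{(\ul d^k_1)} = F(\wt\Bd^k)$, hence $\Phi(F(\ul\Bd^k)) = \pi(F(\wt\Bd^k))$ — is exactly the content of (4.15.1), which the paper also uses but states without elaboration.
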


\begin{proof}
By Corollary 4.11, $F(\ul\Bd^k)$ is written as a linear combination of $L(\ul\Bc',\ul\Bh)$
with coefficient $a_{\Bc_k, \ul\Bc'} \in \ZZ[q]$. Again by Corollary 4.11,
it is enough to show that $a_{\ul\Bc_k, \ul\Bc_k}$ is not contained in $q\ZZ[q]$.
\par
Passing to ${}_{\BA'}\ul\BU_q^-$, we consider the isomorphism
$\Phi : {}_{\BA'}\ul\BU_q^- \isom \BV_q$ as in Theorem 3.3.
Let $\Bc = (c_1, \dots, c_N) \in \NN^{N,\s}$ be the sequence corresponding to
$\ul\Bc \in \NN^{\ul N}$
such that $\Phi(L(\ul\Bc, \ul\Bh)) = \pi(L(\Bc, \Bh))$.
For each $k \in [1,\ul N]$,
define $\wt\Bc_k$ and $\wt \Bd^k$ as in 4.12.
In particular,
\begin{equation*}
\wt\Bc_k = (0, \dots, 0, \underbrace{\ul c_k, \dots, \ul c_k}_{j_k\text{-part}}, 0, \dots, 0).
\end{equation*}  
Then we have a relation in $\BV_q$, 
\begin{equation*}
\tag{4.15.1}  
\Phi(\ul f_m^{(\ul d_m^{k})}\cdots \ul f_1^{(\ul d_1^{k})})
    = \pi(f_n^{(\wt d_n^k)}f_{n-1}^{(\wt d_{n-1}^k)}\cdots f_1^{(\wt d_1^k)}).
\end{equation*}  

\par
On the other hand, by Lemma 4.13, we have
\begin{equation*}
\tag{4.15.2}  
f_n^{(\wt d_n^k)}f_{n-1}^{(\wt d_{n-1}^k)}\cdots f_1^{(\wt d_1^k)} = 
     L(\wt\Bc_k, \Bh) + \sum_{\Bc' > \wt\Bc_k}a_{\Bc',\wt\Bc_k}L(\Bc', \Bh) 
\end{equation*}
with $a_{\Bc',\wt\Bc_k} \in q\ZZ[q]$. 
Note that we have $\pi(L(\wt\Bc_k, \Bh)) = \Phi(L(\ul\Bc_{k}, \ul\Bh))$.
\par

It follows from (4.15.1) and (4.15.2)
that the following formula holds in ${}_{\BA'}\ul\BU_q^-$.

\begin{equation*}
\tag{4.15.3}  
  \ul f_m^{(\ul d_m^{k})}\ul f_{m-1}^{(\ul d_{m-1}^{k})}\cdots \ul f_1^{(\ul d_1^{k})}
    = L(\ul\Bc_{k}, \ul\Bh) + \sum_{\ul\Bc' > \ul\Bc_{k}}
            a'_{\ul\Bc', \ul\Bc_{k}}L(\ul\Bc', \ul\Bh)
\end{equation*}
with $a'_{\ul\Bc_{k}, \ul\Bc'} \in q\FF[q]$, where $\FF$ is a finite field $\ZZ/p\ZZ$.  
This means that under the expansion of $\ul f_m^{(\ul d_m^k)}\cdots \ul f_1^{(\ul d_1^k)}$
in terms of $\SX_{\ul\Bh}$, the coefficient of $L(\ul\Bc_k, \ul\Bh)$ is
contained in $a + p\ZZ[q]$, where $a \in \ZZ$ is equal to 1 modulo $p$.
In particular, this coefficient is not contained in $q\ZZ[q]$.
The lemma is proved.
\end{proof}

The following result is a generalization of Theorem 4.7
to the non-symmetric case.

\begin{thm}  
For each $\Bc \in \NN^{\ul N}$, $F((\ul\Bc))$ can be written as
\begin{equation*}  
F((\ul\Bc)) = L(\ul\Bc, \ul\Bh)
     + \sum_{\ul\Bc' > \ul\Bc}h_{\ul\Bc',\ul\Bc}L(\ul\Bc', \ul\Bh)
  \quad\text{ with }\quad h_{\ul\Bc',\ul\Bc} \in \BA,  
\end{equation*}
where $\Bc'$ satisfies the relation $\weit(L(\Bc',\ul\Bh)) = \weit(L(\ul\Bc, \ul\Bh))$.
\end{thm}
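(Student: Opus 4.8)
The plan is to expand $F((\ul\Bc)) = F(\ul\Bd^1)F(\ul\Bd^2)\cdots F(\ul\Bd^{\ul N})$ by substituting into each factor the PBW expansion of $F(\ul\Bd^k)$ supplied by Lemma 4.15, and then multiplying out the resulting product of PBW basis elements of $\ul\BU_q^-$ using Proposition 2.5 (applied to $\ul\BU_q^-$ in place of $\BU_q^-$), keeping track of both the PBW order on $\NN^{\ul N}$ and the support $s(\,\cdot\,)$ of the indices throughout.

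First I would record two preliminary facts. By Lemma 4.15, for each $k$,
\[
F(\ul\Bd^k) \;=\; b(\ul\Bc_k,\ul\Bh) \;=\; L(\ul\Bc_k,\ul\Bh) + \sum_{\ul\Be > \ul\Bc_k} a_{\ul\Be}\,L(\ul\Be,\ul\Bh),
\qquad a_{\ul\Be}\in\BA,
\]
with $\weit(L(\ul\Be,\ul\Bh)) = -\ul c_k\ul\b_k$ for every term. I then claim the support bound $s(\ul\Be)\subseteq[1,k]$ for all these $\ul\Be$: from $\sum_l \ul e_l\ul\b_l = \ul c_k\ul\b_k$ and the fact that $w_0 = (s_{j_1}\cdots s_{j_k})(s_{j_{k+1}}\cdots s_{j_{\ul N}})$ is length-additive, applying $(s_{j_1}\cdots s_{j_k})\iv$ (which sends $\ul\b_1,\dots,\ul\b_k$ into $\vD^-$ and $\ul\b_{k+1},\dots,\ul\b_{\ul N}$ into $\vD^+$) forces a nonnegative combination of the $\ul\b_l$ that is equal to the element $\ul c_k\ul\b_k$ of the cone on $\{\ul\b_1,\dots,\ul\b_k\}$ to involve only $\ul\b_1,\dots,\ul\b_k$. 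Combined with the weight constraint this also shows that either $\ul\Be=\ul\Bc_k$, or $\ul\Be$ has a nonzero coordinate somewhere in $[1,k-1]$.

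Next I would prove, by induction on $k$, that
\[
F(\ul\Bd^1)\cdots F(\ul\Bd^k) \;=\; L(\ul\Bc^{\le k},\ul\Bh) + \sum_{\ul\Bc'}(\cdots)\,L(\ul\Bc',\ul\Bh),
\]
where $\ul\Bc^{\le k}$ denotes $(\ul c_1,\dots,\ul c_k,0,\dots,0)$ and the sum runs over $\ul\Bc' > \ul\Bc^{\le k}$ with $s(\ul\Bc')\subseteq[1,k]$ and $\weit(L(\ul\Bc',\ul\Bh))=\weit(L(\ul\Bc^{\le k},\ul\Bh))$, with coefficients in $\BA$; the case $k=\ul N$ is the theorem. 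The base case $k=1$ is immediate since $\ul\b_1$ is simple, so $F(\ul\Bd^1)=F_{\ul\b_1}^{(\ul c_1)}=L(\ul\Bc_1,\ul\Bh)$. For the inductive step I multiply the expansion of $F(\ul\Bd^1)\cdots F(\ul\Bd^{k-1})$ by that of $F(\ul\Bd^k)$ and sort the products of PBW elements into three types: (1) $L(\ul\Bc^{\le k-1},\ul\Bh)L(\ul\Bc_k,\ul\Bh)$, which equals $L(\ul\Bc^{\le k},\ul\Bh)$ exactly (since $s(\ul\Bc^{\le k-1})\subseteq[1,k-1]$ no reordering is needed), giving the leading term with coefficient $1$; (2) $L(\ul\Bc',\ul\Bh)L(\ul\Bc_k,\ul\Bh)$ with $\ul\Bc'>\ul\Bc^{\le k-1}$, $s(\ul\Bc')\subseteq[1,k-1]$, again a single PBW element whose index exceeds $\ul\Bc^{\le k}$ at a position $\le k-1$; (3) the mixed products $L(\ul\Bc',\ul\Bh)L(\ul\Be,\ul\Bh)$ and $L(\ul\Bc^{\le k-1},\ul\Bh)L(\ul\Be,\ul\Bh)$ with $\ul\Be>\ul\Bc_k$, $s(\ul\Be)\subseteq[1,k]$. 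For (3) I factor $L(\ul\Be,\ul\Bh)=L(\ul\Be',\ul\Bh)\,F_{\ul\b_k}^{(e_k)}$, where $\ul\Be'$ is $\ul\Be$ with its $k$-th coordinate removed, so $s(\ul\Be')\subseteq[1,k-1]$; Proposition 2.5 expands the product of the two factors supported in $[1,k-1]$ into PBW elements supported in $[1,k-1]$, and reattaching $F_{\ul\b_k}^{(e_k)}$ on the right needs no reordering. All the resulting indices strictly exceed $\ul\Bc^{\le k}$: in the $L(\ul\Bc')L(\ul\Be)$ case because already $\ul\Bc'>\ul\Bc^{\le k-1}$, and in the $L(\ul\Bc^{\le k-1})L(\ul\Be)$ case because $\ul\Be'\neq 0$ (it carries the nonzero coordinate of $\ul\Be$ in $[1,k-1]$), so by comparing weights $L(\ul\Bc^{\le k-1},\ul\Bh)$ does not occur in $L(\ul\Bc^{\le k-1},\ul\Bh)L(\ul\Be',\ul\Bh)$ and every index there exceeds $\ul\Bc^{\le k-1}$ at a position $\le k-1$. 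Support bounds come from Proposition 2.5(ii), the weight equalities from (1.3.1), and the coefficients stay in $\BA$, so the induction closes; in particular $L(\ul\Bc^{\le k},\ul\Bh)$ arises only from (1), with coefficient exactly $1$.

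The step I expect to demand the most care is this inductive step, specifically ensuring that $L(\ul\Bc^{\le k},\ul\Bh)$ is produced with coefficient exactly $1$ and is not contaminated by the mixed products of type (3); the factorization $L(\ul\Be,\ul\Bh)=L(\ul\Be',\ul\Bh)F_{\ul\b_k}^{(e_k)}$ together with the support bound of the second paragraph is precisely the device that isolates it. The subsidiary support lemma for $F(\ul\Bd^k)$ is where the biconvexity of the inversion set $\{\ul\b_1,\dots,\ul\b_k\}$ enters, and is the other place where a small argument, rather than a citation, is needed.
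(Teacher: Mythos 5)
Your proof has a genuine gap: the support bound $s(\ul\Be)\subseteq[1,k]$ that you claim for the PBW terms $L(\ul\Be,\ul\Bh)$ appearing in $F(\ul\Bd^k)$ is false, and the biconvexity argument you give does not establish it. After applying $w^{-1}=(s_{j_1}\cdots s_{j_k})^{-1}$, the equation $\sum_l \ul e_l\ul\b_l = \ul c_k\ul\b_k$ becomes a sum of nonnegative multiples of negative roots plus nonnegative multiples of positive roots equalling a nonnegative multiple of a negative root. This does \emph{not} force the positive part to vanish, because the root lattice is not a direct sum $\ul Q_+\oplus \ul Q_-$; cancellation is possible. Concretely, take $\ul X = B_2$ with the paper's $\ul\Bh=(\ul1,\ul2,\ul1,\ul2)$ (Section 7.1), so $(\ul\b_1,\ul\b_2,\ul\b_3,\ul\b_4)=(\a_{\ul1},\a_{\ul1}+\a_{\ul2},\a_{\ul1}+2\a_{\ul2},\a_{\ul2})$. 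For $k=2$, $\ul\Bc_2=(0,\ul c_2,0,0)$ has weight $\ul c_2(\a_{\ul1}+\a_{\ul2})$, which is also realized by $\ul\Be=(1,\ul c_2-1,0,1)$; here $\ul\Be>\ul\Bc_2$ but $s(\ul\Be)=\{1,2,4\}\not\subseteq[1,2]$. (The analogous point in symmetric $A_2$ with $\Bh=(1,2,1)$ is even more explicit: $F(\Bd^2)=f_2f_1=L((0,1,0))+q\,L((1,0,1))$ by Lemma 4.5, and $(1,0,1)$ has support $\{1,3\}$.)

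Worse, your inductive hypothesis as stated is also false. For fixed $k$ and weight $\weit L(\ul\Bc_{\le k},\ul\Bh)$, the constraints $\ul\Bc'>\ul\Bc_{\le k}$ and $\sum_l\ul c'_l\ul\b_l=\sum_{l\le k}\ul c_l\ul\b_l$ typically \emph{force} $\ul\Bc'$ to have a nonzero coordinate beyond position $k$. In the $B_2$ example with $k=2$: the system $\ul c'_1+\ul c'_2+\ul c'_3=\ul c_1+\ul c_2$, $\ul c'_2+2\ul c'_3+\ul c'_4=\ul c_2$ together with $\ul c'_3=\ul c'_4=0$ gives $\ul\Bc'=\ul\Bc_{\le 2}$; so any strictly larger $\ul\Bc'$ of the same weight must have $(\ul c'_3,\ul c'_4)\neq(0,0)$. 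Thus your sum ``over $\ul\Bc'>\ul\Bc^{\le k}$ with $s(\ul\Bc')\subseteq[1,k]$'' is empty, while the actual expansion in the theorem is not. Since the factorization $L(\ul\Be,\ul\Bh)=L(\ul\Be',\ul\Bh)F_{\ul\b_k}^{(e_k)}$ with $s(\ul\Be')\subseteq[1,k-1]$ and the ``no reordering when reattaching'' step in your case (3), as well as the claim in case (2) that $L(\ul\Bc',\ul\Bh)L(\ul\Bc_k,\ul\Bh)$ is a single PBW element, all rest on the support bound, the inductive step as you have written it does not go through.

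What the paper does instead, which avoids the false support bound entirely: after invoking Lemma 4.15, it distinguishes the structure of $\ul\Bc''>\ul\Bc_{k+1}$ purely by weight and lexicographic-order arguments (the ``above remark'' in the proof of 4.16), showing that either $\ul c''_\ell>0$ for some $\ell\le k$, or $\ul c''_1=\cdots=\ul c''_k=0$ with $\ul c''_{k+1}>\ul c_{k+1}$; it places \emph{no} support constraint on $\ul\Bc''$ at positions $>k+1$. It then splits the factor to the \emph{right}, writing $L(\ul\Bc'',\ul\Bh)=L(\ul\Bc''_{\le k},\ul\Bh)L(\ul\Bc''_{>k},\ul\Bh)$, multiplies $L(\ul\Bc_{\le k},\ul\Bh)L(\ul\Bc''_{\le k},\ul\Bh)$ using Proposition 2.5 (which gives an honest support bound in $[1,k]$ for \emph{that} product), and reattaches $L(\ul\Bc''_{>k},\ul\Bh)$; the weight argument then isolates the leading term $L(\ul\Bc_{\le k+1},\ul\Bh)$. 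You would need to adopt a device along these lines rather than a support bound on the individual expansions.
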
  

\begin{proof}
By induction on $k$, we shall prove
\begin{equation*}
\tag{4.16.1}  
  \ul F(\ul \Bd^1)\ul F(\ul \Bd^2)\cdots \ul F(\ul \Bd^k)
  = L(\ul \Bc_{\le k}, \ul\Bh) + \sum_{\ul\Bc' > \ul\Bc_{\le k}}
          h_{\ul\Bc', \ul\Bc_{\le k}}L(\ul\Bc',\ul\Bh),
\end{equation*}
where $\ul\Bc_{\le k} = (\ul c_1, \dots, \ul c_k, 0, \dots, 0)$, and
$h_{\ul\Bc', \ul\Bc_{\le k}} \in \BA$.  
The formula (4.16.1) certainly holds for $k = 1$ by Lemma 4.15. If it holds for $k = \ul N$,
it gives the assertion of the theorem. 
We assume that (4.16.1) hods for $k$.  Then by Lemma 4.15, we have

\begin{align*}
\tag{4.16.2}
\ul F&(\ul \Bd^1)\ul F(\ul \Bd^2)\cdots \ul F(\ul \Bd^k)\ul F(\ul \Bd^{k+1})  \\
&= \Bigl(L(\ul\Bc_{\le k}, \ul\Bh) + \sum_{\ul\Bc' > \ul\Bc_{\le k}}
        h_{\ul\Bc',\ul\Bc_{\le k}}L(\ul\Bc', \ul\Bh)\Bigr)
        \Bigl(L(\ul\Bc_{k+1}, \ul\Bh) + \sum_{\ul\Bc'' > \ul\Bc_{k+1}}
             a_{\ul\Bc'',\ul\Bc_{k+1}}L(\ul\Bc'',\ul\Bh)\Bigr).
\end{align*}

\par
We consider the relation $\ul\Bc' > \ul\Bc_{\le k}$ for a fixed $k$.
This condition is given by either $(\ul c'_1, \dots, \ul c'_k) > (\ul c_1, \dots, \ul c_k)$ 
or $(\ul c'_1, \dots, \ul c'_k) = (\ul c_1, \dots, \ul c_k)$ with $\ul c'_{\ell} > 0$
for some $\ell > k$.
But since $L(\ul\Bc',\ul\Bh)$ has the same weight as $L(\ul\Bc_{\le k}, \ul\Bh)$,
which is equal to $\ul c_1\ul\b_1 + \cdots + \ul c_k\ul\b_k$, the latter case does not occur,
and we must have $(\ul c_1', \dots, \ul c'_k) > (\ul c_1, \dots, \ul c_k)$. 
\par
We also consider the relation $\ul\Bc'' > \ul\Bc_{k+1}$ for a fixed $k$.
This condition is given by
either $(\ul c''_1, \dots, \ul c''_{k+1}) > (0, \dots, 0, \ul c_{k+1})$ or
$(\ul c''_1, \dots, \ul c''_{k+1}) =  (0, \dots, 0, \ul c_{k+1})$ with $\ul c''_{\ell} > 0$
for some $\ell > {k+1}$.
But since $L(\ul\Bc'', \ul\Bh)$ has the same weight as
$\ul F_{\ul\b_{k+1}}^{(\ul c_{k+1})}$,
which is equal to $\ul c_{k+1}\ul\b_{k+1}$.
Hence the latter case does not occur, and in the former
case, either $\ul c''_{\ell} > 0$ for some $\ell < k+1$, or
$\ul c''_1 = \cdots = \ul c''_k = 0$ and $\ul c''_{k+1} > \ul c_{k+1}$.  
\par
In the following, we show each factor $L(\ul\Bd, \ul\Bh)L(\ul\Bd',\ul\Bh)$ in (4.16.2)
is a linear combination of $L(\ul\Be, \ul\Bh)$ such that $\ul\Be > \ul \Bc_{\le k+1}$.
\par\medskip
(a) First note that
$L(\ul\Bc_{\le k}, \ul\Bh) L(\ul\Bc_{k+1}, \ul\Bh) = L(\ul\Bc_{\le k+1}, \ul\Bh)$.
This is clear from the definition. 
\par\medskip
(b) Next consider $L(\ul\Bc',\ul\Bh)L(\ul c_{k+1}, \ul\Bh)$ such that
$\ul\Bc' > \ul\Bc_{\le k}$. 
By the above remark, we have $(\ul c'_1, \dots, \ul c'_k) > (\ul c_1, \dots, \ul c_k)$.
Moreover, 
$(\ul F_{\ul\b_{k+1}}^{(\ul c'_{k+1})}\cdots \ul F_{\ul\b_{\ul N}}^{(\ul c'_{\ul N})})
\ul F_{\ul\b_{k+1}}^{(\ul c_{k+1})}$
is a linear combination of $L(\ul\Bd, \ul\Bh)$ such that $s(\ul\Bd) \subset [k+1, \ul N]$
(see 2.4 for the definition $s(\ul\Bd)$).     
It follows that $L(\ul\Bc',\ul\Bh)\ul F_{\ul\b_{k+1}}^{(\ul c_{k+1})}$
is a linear combination of $L(\ul\Be, \ul\Bh)$, where $\ul\Be = (\ul e_1, \dots, \ul e_{\ul N})$
is such that $\ul e_{\ell} = \ul c'_{\ell}$ for $\ell = 1, \dots, k$.
In particular, $\ul\Be > \ul\Bc_{\le k+1}$.  Thus $L(\ul\Bc',\ul\Bh)L(\ul\Bc_{k+1}, \ul\Bh)$
is a linear combination of $L(\ul\Be, \ul\Bh)$ such that $\ul\Be > \ul\Bc_{\le k+1}$.

\par\medskip
(c) \ Next consider $L(\ul\Bc', \ul\Bh)L(\ul \Bc'',\ul\Bh)$, where
$\ul\Bc' > \ul\Bc_{\le k}$ and $\ul\Bc'' > \ul\Bc_{k+1}$.
By the above discussion, we have $(\ul c_1', \dots, \ul c_k') > (c_1, \dots, c_k)$.
In particular, we have $\ul\Bc' > \ul\Bc_{\le k+1}$. 
By Proposition 2.5, $L(\ul\Bc', \ul\Bh)L(\ul \Bc'',\ul\Bh)$ is a linear combination of
$L(\ul\Be, \ul\Bh)$ such that $\ul\Be \ge \ul\Bc'$.  Hence $\ul\Be > \ul\Bc_{\le k+1}$.  
Thus $L(\ul\Bc', \ul\Bh)L(\ul \Bc'',\ul\Bh)$ is a linear combination of $L(\ul\Be,\ul\Bh)$
such that $\ul\Be > \ul\Bc_{\le k+1}$. 

\par\medskip
(d) \ Finally consider $L(\ul\Bc_{\le k}, \ul\Bh)L(\ul\Bc'',\ul\Bh)$, where
$\ul\Bc''> \ul\Bc_{k+1}$. Then by the above remark,
either $\ul c''_{\ell} > 0$ for some $\ell\le k$, or $\ul c_1'' = \cdots = \ul c''_k = 0$
and $\ul c''_{k+1} > \ul c_{k+1}$.
Set $\ul\Bc''_{\le k} = (\ul c''_1, \dots, \ul c''_k, 0, \dots, 0)$.  
By Proposition 2.5, $L(\ul\Bc_{\le k}, \ul\Bh)L(\ul \Bc''_{\le k}, \ul\Bh)$ is
a linear combination of $L(\ul\Be, \ul\Bh)$ such that $\ul\Be \ge \ul\Bc_{\le k}$.
Moreover, the support of $\ul\Be$ is contained in $[1,k]$.
If there exists $1 \le \ell \le k$ such that $\ul e_{\ell} > \ul c_{\ell}$, then
we have $\ul\Be > \ul\Bc_{\le k+1}$.  Hence we may assume that
$(\ul e_1, \dots, \ul e_k) = (\ul c_1, \dots, \ul c_k)$. 
But since the weight of $L(\ul\Be, \ul\Bh)$ coincides with
$\weit(L(\ul\Bc_{\le k}, \ul\Bh)) + \weit(L(\ul\Bc''_{\le k}, \ul\Bh))$, 
we must have $\weit(L(\ul\Bc''_{\le k}, \ul\Bh)) = 0$, and $\ul c_1'' = \cdots = \ul c_k'' = 0$.
In particular, $\ul c''_{k+1} > \ul c_{k+1}$ by the above remark. 
Since $L(\ul\Bc'',\ul\Bh) = F_{\ul\b_{k+1}}^{(\ul c''_{k+1})}\cdots
            F_{\ul\b_{\ul N}}^{(\ul c''_{\ul N})}$,
$L(\ul\Bc_{\le k}, \ul\Bh)L(\ul\Bc'', \ul\Bh)$ coincides with
$L(\ul\Be, \ul\Bh)$ such that $\ul\Be > \ul\Bc_{\le k+1}$. 
Thus $L(\ul\Bc_{\le k}, \ul\Bh)L(\ul\Bc'',\ul\Bh)$ is a linear combination of
$L(\ul\Be, \ul\Bh)$ such that $\ul\Be > \ul \Bc_{\le k+1}$.  
\par\medskip
By (a) $\sim$ (d), (4.16.1) holds for $k +1$.  The theorem is proved. 
\end{proof}  

\para{4.17.}
By Theorem 4.16, the set $\{ F((\ul\Bc)) \mid \ul\Bc \in \NN^{\ul N} \}$
gives an $\BA$-basis of ${}_{\BA}\ul\BU_q^-$, and a basis of $\ul\BU_q^-$.
We write $F((\ul \Bc)) = m(\ul\Bc, \ul\Bh)$, and set
$\SM_{\ul\Bh} = \{ m(\ul\Bc, \ul\Bh) \mid \ul \Bc \in \NN^{\ul N} \}$. 
$\SM_{\ul\Bh}$ gives a monomial basis of $\ul\BU_q^-$. 
\par
Let $\Bh$ be the reduced sequence of $w_0$ obtained from $\ul\Bh$.
We have the monomial basis $\SM_{\Bh} = \{ m(\Bc, \Bh) \mid \Bc \in \NN^N\}$
defined by Theorem 4.7.  But since $\s$ does not preserve the set $\SM_{\Bh}$,
this basis is not appropriate for 
the folding theory. 
In the following, we define a new basis $\wt\SM_{\Bh}$ of $\BU_q^-$
by modifying $\SM_{\Bh}$.
\par
Take $\Bc \in \NN^N$.
For each $k \in [1, \ul N]$, define $\wt\Bc_k$ and $\wt\Bd^k$ as in 4.12.
As before, we set
\begin{equation*}
\tag{4.17.1}  
F(\wt\Bd^k) = f_n^{(\wt d_n^k)}f_{n-1}^{(\wt d_{n-1}^k)}\cdots f_1^{(\wt d_1^k)},
\end{equation*}  
and define
\begin{equation*}
\tag{4.17.2}  
\wt m(\Bc, \Bh) = F(\wt\Bd^1)F(\wt\Bd^2) \cdots F(\wt\Bd^{\ul N}). 
\end{equation*}  

Note that
\begin{equation*}
L(\wt\Bc_1,\Bh)L(\wt\Bc_2, \Bh) \cdots L(\wt\Bc_{\ul N}, \Bh) = L(\Bc, \Bh).
\end{equation*}
Then by Lemma 4.13, together with a similar discussion as in the proof of
Theorem 4.16, we have

\begin{equation*}
\tag{4.17.3}  
\wt m(\Bc, \Bh) = L(\Bc, \Bh) + \sum_{\Bc' > \Bc}h_{\Bc',\Bc}L(\Bc', \Bh)
\end{equation*}
with $h_{\Bc',\Bc} \in \BA$.
\par
Set $\wt\SM_{\Bh} = \{ \wt m(\Bc,\Bh) \mid \Bc \in \NN^N\}$. 
Then $\wt\SM_{\Bh}$ gives a monomial basis of $\BU_q^-$, which is different from
$\SM_{\Bh}$.  The following result is immediate from the previous discussion,
which gives an analogue of Proposition 3.7 for the case of monomial bases.

\begin{prop}  
Let $\SM_{\ul\Bh}$ be the monomial basis of $\ul\BU_q^-$, and 
$\wt\SM_{\Bh}$ the monomial basis of $\BU_q^-$.
\begin{enumerate}
\item \ $\s$ acts on $\wt\SM_{\Bh}$ as a permutation
$\s : \wt m(\Bc,\Bh) \mapsto \wt m(\s(\Bc), \Bh)$. $\wt m(\Bc,\Bh)$ is $\s$-invariant
if and only if $\Bc \in \NN^{N,\s}$. 
\item
Under the bijection $\NN^{\ul N} \simeq \NN^{N,\s}$, $\ul\Bc \mapsto \Bc$,
the assignment $m(\ul\Bc,\ul\Bh) \mapsto \wt m(\Bc, \Bh)$ gives a bijection
$\SM_{\ul\Bh} \isom \wt\SM_{\Bh}^{\s}$, where $\wt\SM_{\Bh}^{\s}$ is the set of
$\s$-fixed elements in $\wt\SM_{\Bh}$.
\item
The bijection in (ii) satisfies the property
\begin{equation*}
\Phi(m(\ul\Bc, \ul\Bh)) = \pi(\wt m(\Bc, \Bh)). 
\end{equation*}  
\end{enumerate}  
\end{prop}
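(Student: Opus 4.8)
The plan is to derive all three parts from Lemma 4.13 and Proposition 3.7, so that essentially no new computation is needed.

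\textbf{Part (i).} The starting point is that, by Lemma 4.13, each factor of $\wt m(\Bc,\Bh) = F(\wt\Bd^1)\cdots F(\wt\Bd^{\ul N})$ is a \emph{canonical} basis element, $F(\wt\Bd^k) = b(\wt\Bc_k,\Bh)$. I would first record the elementary fact that the $\s$-action on $\NN^N$ of 3.6 stabilizes every $j_k$-block and merely permutes the coordinates inside it; hence $\s$ carries $\wt\Bc_k$ to the vector obtained from $\s(\Bc)$ by the truncation (4.12.3), which I denote $\wt\Bc_k^{\,\s}$, with associated dimension vector $\wt\Bd^{k,\s}$. Applying the canonical-basis analogue of Proposition 3.7(i), namely Proposition 3.7(iv), and then Lemma 4.13 a second time for $\s(\Bc)$, gives $\s(F(\wt\Bd^k)) = \s(b(\wt\Bc_k,\Bh)) = b(\wt\Bc_k^{\,\s},\Bh) = F(\wt\Bd^{k,\s})$. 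Since $\s$ is an algebra automorphism, taking the product over $k=1,\dots,\ul N$ yields $\s(\wt m(\Bc,\Bh)) = \wt m(\s(\Bc),\Bh)$. For the equivalence, (4.17.3) shows that $\wt\SM_{\Bh}$ is triangular with respect to $\SX_{\Bh}$, so $\Bc\mapsto\wt m(\Bc,\Bh)$ is injective; therefore $\wt m(\Bc,\Bh)$ is $\s$-stable if and only if $\s(\Bc)=\Bc$, i.e. $\Bc\in\NN^{N,\s}$.

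\textbf{Part (ii)} is then formal: by (i), $\wt\SM_{\Bh}^{\s} = \{\wt m(\Bc,\Bh)\mid \Bc\in\NN^{N,\s}\}$, and composing the bijection $\NN^{\ul N}\isom\NN^{N,\s}$ of 3.6 with $\Bc\mapsto\wt m(\Bc,\Bh)$ and with the inverse of the bijection $\ul\Bc\mapsto m(\ul\Bc,\ul\Bh)$ from $\NN^{\ul N}$ onto $\SM_{\ul\Bh}$ (a bijection by Theorem 4.16) produces the stated bijection $\SM_{\ul\Bh}\isom\wt\SM_{\Bh}^{\s}$.

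\textbf{Part (iii).} Here I would prove the factorwise identity $\pi(F(\wt\Bd^k)) = \Phi(F(\ul\Bd^k))$ in $\BV_q$ and then multiply over $k$. Since $\Bc\in\NN^{N,\s}$, the weight $\sum_{k'\text{ in the }j_k\text{-part}}c_{k'}\b_{k'} = -\weit L(\wt\Bc_k,\Bh)$ is $\s$-stable; using that $\Phi$ respects the weight gradings under the identification $Q^{\s}\simeq\ul Q$ sending $\a_j=\sum_{i\in j}\a_i$ to the simple root $\ul\a_j$, together with (3.8.2), this forces $(\wt d^k_i)_{i\in I}$ to be constant on each $\s$-orbit $\bar\ell\subset I$ with common value $\ul d^k_\ell$, i.e. $\wt\Bd^k$ is the unfolding of $\ul\Bd^k$. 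By the construction of $\Bh$ in 4.3 the $\s$-orbits occupy consecutive blocks of the total order $I=\{1,\dots,n\}$, and admissibility ($a_{ii'}=0$ for $i\ne i'$ in one orbit) makes $f_i,f_{i'}$ commute within an orbit; hence $F(\wt\Bd^k) = f_n^{(\wt d^k_n)}\cdots f_1^{(\wt d^k_1)}$ regroups as $\prod_{\ell=m}^{1}\bigl(\prod_{i\in\bar\ell}f_i^{(\ul d^k_\ell)}\bigr) = \prod_{\ell=m}^{1}\wt f_{\bar\ell}^{(\ul d^k_\ell)}$. Applying $\pi$ and Theorem 3.3, which gives $\Phi(\ul f_\ell^{(a)})=\pi(\wt f_{\bar\ell}^{(a)})$, we obtain $\pi(F(\wt\Bd^k)) = \Phi(\ul f_m^{(\ul d^k_m)}\cdots\ul f_1^{(\ul d^k_1)}) = \Phi(F(\ul\Bd^k))$; the product over $k$ then yields $\pi(\wt m(\Bc,\Bh)) = \Phi(F((\ul\Bc))) = \Phi(m(\ul\Bc,\ul\Bh))$, where $\pi$ may be applied to $\wt m(\Bc,\Bh)$ since it lies in ${}_{\BA}\BU_q^{-,\s}$ by (i).

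The step I expect to require the most care is the weight bookkeeping in (iii) — verifying precisely that $\wt\Bd^k$ is the unfolding of $\ul\Bd^k$ — since that is where the compatibility of $\Phi$ with the root-lattice identification $\a_j = \sum_{i\in j}\a_i\leftrightarrow\ul\a_j$ and with formula (3.8.2) must be invoked; everything else is a direct consequence of Lemma 4.13, Proposition 3.7 and the triangularity (4.17.3).
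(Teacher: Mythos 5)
Your proof is correct and follows essentially the route the paper intends when it says the proposition is ``immediate from the previous discussion'': parts (i) and (ii) come from Lemma~4.13, Proposition~3.7(iv) and the triangularity (4.17.3), and part (iii) is, factor by factor, exactly the identity $\Phi(F(\ul\Bd^k))=\pi(F(\wt\Bd^k))$ that the paper already recorded as (4.15.1) in the proof of Lemma~4.15, so you could simply cite it there rather than re-deriving it from Theorem~3.3. The only stylistic difference is that in (i) you detour through canonical bases, whereas a more elementary argument suffices: $\s$ is an algebra automorphism sending $f_i\mapsto f_{\s(i)}$, the orbits are consecutive blocks in the chosen total order on $I$, and the $f_i$ within one orbit commute by admissibility, so $\s(F(\wt\Bd^k))=F(\wt\Bd^{k,\s})$ directly.
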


\par\bigskip
\section{ Mackey filtration for KLR algebras }

\para{5.1.}
Let $X = (I, (\,\ ))$ be a symmetric Cartan datum, and $\s$ an admissible automorphism on $X$. 
(Here we consider $X$ of finite type, following the setup in 1.1.  But the discussion
below works for $X$ of general type.)
Assume that $\Bk$ is an algebraically closed field.  We define 
a KLR algebra associated to $(X, \s)$ as follows. For each $i, i' \in I$, 
choose  a polynomial $Q_{i,i'}(u,v) \in \Bk[u,v]$ 
satisfying the following properties;
\begin{enumerate}
\item \ $Q_{i,i}(u,v) = 0$,  
\item  \ $Q_{i,i'}(u,v)$ is written as 
$Q_{i,i'}(u,v) = \sum_{p,q}t_{ii';pq}u^pv^q$, where the 
coefficients $t_{ii';pq} \in \Bk$ satisfy the condition 
\begin{equation*}
(\a_i,\a_i)p + (\a_{i'},\a_{i'})q = -2(\a_i,\a_{i'}) 
        \quad\text{ if $t_{ii';pq} \ne 0$. }
\end{equation*}
Moreover, $t_{ii';-a_{ii'},0} \ne 0, t_{ii';0,-a_{ii'}} \ne 0$.  
\item  \ $Q_{i,i'}(u,v) = Q_{i',i}(v,u)$, 
\item \ $Q_{\s(i),\s(i')}(u,v) = Q_{i,i'}(u,v)$.  
\end{enumerate}
\par
For $\b \in Q_+$  such that $|\b| = n$, define $I^{\b}$ by 
\begin{equation*}
I^{\b} = \{ (i_1, \dots, i_n) \in I^n \mid \sum_{1 \le k \le n}\a_{i_k} = \b\}.
\end{equation*}
The KLR algebra $R(\b)$ is an associative $\Bk$-algebra 
defined by the generators, $e(\nu)$ ($\nu = (\nu_1, \dots, \nu_n) \in I^{\b}$), 
$x_k$ $(1 \le k \le n)$, $\tau_k \ (1 \le k < n)$,  with relations
\begin{align*}
\tag{1}
e(\nu)e(\nu') &= \d_{\nu,\nu'}e(\nu), \quad \sum_{\nu \in I^{\b}}e(\nu) = 1, \\
\tag{2}
x_kx_l &= x_lx_k, \quad  x_ke(\nu) = e(\nu)x_k, \\ 
\tag{3}
\tau_ke(\nu) &= e(s_k\nu)\tau_k, \quad 
           \tau_k\tau_l = \tau_l\tau_k, \quad\text{ if } |k - l| > 1, \\
\tag{4}
\tau_k^2e(\nu) &= Q_{\nu_k, \nu_{k+1}}(x_k, x_{k+1})e(\nu), \\
\tag{5}
(\tau_k x_l &- x_{s_k l}\tau_k)e(\nu) = 
                    \begin{cases}
                      -e(\nu)  &\quad\text{ if } l = k, \nu_k = \nu_{k+1}, \\
                       e(\nu)  &\quad\text{ if } l = k+1, \nu_k = \nu_{k+1}, \\
                       0       &\quad\text{ otherwise, }  
                    \end{cases}  \\
\tag{6}
(\tau_{k+1}&\tau_{k}\tau_{k+1} - \tau_{k}\tau_{k+1}\tau_{k})e(\nu)  \\
        &= \begin{cases}
     \displaystyle
     \frac{Q_{\nu_k,\nu_{k+1}}(x_k, x_{k+1}) - Q_{\nu_k,\nu_{k+1}}(x_{k+2}, x_{k+1})}
          {x_k - x_{k+2}}e(\nu)  &\quad\text{ if } \nu_k = \nu_{k+2}, \\
     0                           &\quad\text{ otherwise. }
          \end{cases}
\end{align*}  
(Here $s_k$ ($1 \le k < n$) is the transvection $(k, k+1)$ in the symmetric group
$S_n$. $S_n$ acts on $I^n$ by 
$w : (\nu_1, \dots, \nu_n) \mapsto (\nu_{w\iv(1)}, \dots, \nu_{w\iv (n)})$.)
\par
The algebra $R(\b)$ is a $\BZ$-graded algebra, where the degree is 
defined, for $\nu = (\nu_1, \dots, \nu_n) \in I^{\b}$, 
 by 
\begin{equation*}
\deg e(\nu) = 0, \quad \deg x_ke(\nu) = (\a_{\nu_k}, \a_{\nu_k}), \quad 
\deg\tau_ke(\nu) = -(\a_{\nu_k}, \a_{\nu_{k+1}}).
\end{equation*}  

For a graded $R(\b)$-module $M = \bigoplus_{i \in \BZ}M_i$, the grading shift
by $-1$ is denoted by $qM$, namely $(qM)_i = M_{i-1}$, where $q$ is an indeterminate.

There exists an anti-involution $\psi : R(\b) \to R(\b)$, which leaves all the
generators $e(\nu), x_k$ and $\t_k$ invariant. 

\para{5.2.}
We consider the KLR algebra $R(\b)$ associated to
$(X, \s)$. Let $\Bn$ be the order of $\s$, and assume that $\ch \Bk$ does not
divide $\Bn$.
By the condition $Q_{\s(i), \s(i')}(u,v) = Q_{ii'}(u,v)$ on the polynomials $Q_{ii'}$, 
$\s$ induces an isomorphism $R(\b) \isom R(\s(\b))$, by
$e(\nu) \mapsto e(\s(\nu)),x_k \mapsto x_k, \tau_k \mapsto \tau_k$,
where $\s(\nu_1, \dots, \nu_n) = (\s(\nu_1), \dots, \s(\nu_n))$.   
Assume that $\b$ is $\s$-stable. Then $\s$ induces an automorphism
on $R(\b)$, which we denote also by the same symbol $\s$.
Let $R(\b)\Mod$ be the abelian category of graded $R(\b)$-modules. 
We obtain a functor
$\s^*: R(\b)\Mod \to R(\b)\Mod$, where for an $R(\b)$-module $M$, $\s^*M$ has
the same underlying space $M$, but the action of $R(\b)$ is given by $\s(x)$ on $M$
for $x \in R(\b)$.
\par
Following McNamara \cite{M}, we consider an analogue of Lusztig's category
$\wt \ZC$ associated to the periodic functor $\s^*$ on $\ZC = R(\b)\Mod$ (see \cite{L-book}).
Assume that $\b \in Q_+^{\s}$, and let
$\SC_{\b}$ be the category whose objects are the pairs $(M, \f)$,
where $M \in R(\b)\Mod$,
and $\f : \s^*M  \isom M$ is an isomorphism  such that
the composition satisfies the relation
\begin{equation*}
\f\circ \s^*\f\circ \cdots \circ (\s^*)^{\Bn-1}\f = \id_M.
\end{equation*}
A morphism from $(M, \f) \to (M', \f')$ in $\SC_{\b}$ is a morphism $f : M \to M'$
satisfying the following commutative diagram
\begin{equation*}
\begin{CD}
  \s^*M @>\s^* f>> \s^* M '  \\
  @V\f VV        @VV\f'V    \\
  M   @>f>> M '
\end{CD}    
\end{equation*}  
Then the category $\SC_{\b}$ is equivalent to the category of
graded representations of the algebra
\begin{equation*}
R(\b)\sharp(\ZZ/\Bn\ZZ) = R(\b) \otimes_{\Bk}\Bk[\ZZ/\Bn\ZZ],
\end{equation*}
where $\Bk[\ZZ/\Bn\ZZ]$ is the group algebra of $\ZZ/\Bn\ZZ$, and
$\Bk[\ZZ/\Bn\ZZ]$ acts on $R(\b)$ through the action of $\s$.
Hence $\SC_{\b}$ is an abelian category.
We denote by $\SP_{\b}$ the full subcategory of $\SC_{\b}$ consisting of
finitely generated projective objects in $\SC_{\b}$.
Let $\SL_{\b}$ be the full subcategory of $\SC_{\b}$ consisting of $(M, \phi)$
such that $M$ is a finite dimensional $R(\b)$-module. 
Then $\SP_{\b}$ is an additive category and $\SL_{\b}$ is an abelian category. 

\para{5.3.}
Let $\z_{\Bn}$ be a primitive $\Bn$-th root of unity in $\CC$.
We fix, once and for all, a ring homomorphism $\ZZ[\z_{\Bn}] \to \Bk$,
which maps $\z_{\Bn}$ to a primitive $\Bn$-th root of unity in $\Bk$.
If $(M, \f) \in \SC_{\b}$, then $(M, \z_{\Bn}\f) \in \SC_{\b}$, which we denote by
$\z_{\Bn}(M, \f)$. 
\par
An object $(A, \f)$ of $\SC_{\b}$ is said to be traceless if there is
a representation $M$ of $R(\b)$, an integer $t \ge 2$ dividing $\Bn$ such that
$(\s^*)^tM  \simeq M$, and an isomorphism
\begin{equation*}
A \simeq M \oplus \s^*M \oplus \cdots \oplus (\s^*)^{t-1}M,
\end{equation*}
where $\f : \s^*A \isom A$ is given by the identity maps $(\s^*)^kM \isom (\s^*)^kM$
$(1 \le k \le t-1)$, and an isomorphism $(\s^*)^tM \isom M$ on each
direct summand, under the above isomorphism.
\par
For an additive category $\SP_{\b}$, we define $K(\SP_{\b})$ 
as a $\ZZ[\z_{\Bn}]$-module generated by
symbols $[A, \f]$ associated to the isomorphism class of the object $(A, \f) \in \SP_{\b}$
subject to the relations
\begin{enumerate}
\item \ $[X] = [X'] + [X'']$ if $X \isom X' \oplus X''$,
\item \ $[A, \z_{\Bn}\f] = \z_{\Bn}[A, \f]$,
\item \ $[X] = 0$ if $X$ is traceless.  
\end{enumerate}  

For an abelian category $\SL_{\b}$, $K(\SL_{\b})$ is defined similarly,
but by replacing the condition (i) by
\par\medskip
(i$'$) \ $[X] = [X'] + [X'']$ if there exists a short exact sequence
\begin{equation*}
  \begin{CD}
    0 @>>>  X' @>>>  X  @>>>  X'' @>>> 0.
  \end{CD}  
\end{equation*}  
\par
$K(\SP_{\b})$ and $K(\SL_{\b})$ are analogues of the Grothendieck group. 
$K(\SP_{\b})$ and $K(\SL_{\b})$ have structures of $\ZZ[\z_{\Bn}, q, q\iv]$-modules.
We set
\begin{equation*}
  K(\SP) = \bigoplus_{\b \in Q_+^{\s}}K(\SP_{\b}), \qquad
  K(\SL) = \bigoplus_{\b \in Q_+^{\s}}K(\SL_{\b}). 
\end{equation*}  
Thus $K(\SP)$ and $K(\SL)$ have structure of $\ZZ[\z_{\Bn}, q, q\iv]$-modules.
\para{5.4.}
Assume that $(P, \f), (Q, \f') \in \SP_{\b}$. Then for
$\f: \s^*P \isom P, \f': \s^*Q \isom Q$, $\f \otimes \f'$ induces a map
$(\s^*P)^{\psi} \otimes_{R(\b)} \s^*Q \to P^{\psi}\otimes_{R(\b)} Q$. 
Since we have canonical isomorphisms $\s^*P \simeq P, \s^*Q \simeq Q$ as vector spaces,
$\f\otimes \f'$ is regarded as a linear transformation on $P^{\psi}\otimes Q$. 
We set
\begin{equation*}
\tag{5.4.1}  
  ([P,\f], [Q, \f']) = \sum_{d \in \BZ}
         \Tr\bigl(\f \otimes \f', (P^{\psi}\otimes_{R(\b)}Q)_d\bigr)q^d.
\end{equation*}
(5.4.1) induces a symmetric bilinear pairing
$(\ ,\ ) : K(\SP_{\b}) \times K(\SP_{\b}) \to \BZ[\z_{\Bn}]((q))$.

\par
The following result, due to \cite[Thm. 3.6]{M}, gives a classification
of simple objects in $\SC_{\b}$.  

\begin{prop}  
Any simple object in $\SC_{\b}$ is either traceless or
of the form $(L, \f)$ with $L$ a simple $R(\b)$-module.
\end{prop}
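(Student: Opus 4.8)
The plan is to exploit the equivalence from 5.2 between $\SC_\b$ and the category of graded modules over the crossed product $A := R(\b)\sharp(\ZZ/\Bn\ZZ)$, and then run standard Clifford theory for the finite group $\ZZ/\Bn\ZZ$, whose order $\Bn$ is invertible in $\Bk$. Since the group-algebra part $\Bk[\ZZ/\Bn\ZZ]$ sits in degree $0$, the grading does not interfere with any step, so I would argue with underlying (ungraded) $A$-modules and reinstate the grading at the end. Let $(S,\f)$ be a simple object of $\SC_\b$, regarded as a simple $A$-module.

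First I would invoke Clifford's theorem: because $|\ZZ/\Bn\ZZ|=\Bn$ is invertible in $\Bk$, the restriction $S|_{R(\b)}$ is semisimple and $\langle\s\rangle$ permutes its isotypic components transitively. Fix a simple summand $L$ of $S|_{R(\b)}$, let $t\mid\Bn$ be the cardinality of the $\langle\s\rangle$-orbit of the isomorphism class of $L$, and let $H\le\ZZ/\Bn\ZZ$ be its stabilizer, of index $t$. By the Clifford correspondence, $S\cong\Ind_{R(\b)\sharp H}^{A}W$ for a simple $R(\b)\sharp H$-module $W$ whose restriction to $R(\b)$ is isotypic of type $L$, and then $S|_{R(\b)}\cong\bigoplus_{k=0}^{t-1}(\s^*)^k(W|_{R(\b)})$.

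The crux is a Schur/multiplicity-one step asserting $W|_{R(\b)}\cong L$. Here $[L]$ is fixed by $H$; choose an isomorphism $\vf_0:\s^*L\isom L$. By Schur's lemma $\End_{R(\b)}(L)=\Bk$, so the composite of $\vf_0,\s^*\vf_0,\dots,(\s^*)^{|H|-1}\vf_0$ around the cycle is a scalar, and since $\Bk$ is algebraically closed with $\ch\Bk\nmid\Bn$ (hence $\nmid|H|$) we may rescale $\vf_0$ by a suitable root of unity so that this scalar is $1$. This trivializes the $2$-cocycle governing the action of $R(\b)\sharp H$ on the $L$-isotypic component, so the category of $R(\b)\sharp H$-modules that are $L$-isotypic over $R(\b)$ is equivalent to the module category of the ordinary group algebra $\Bk[H]$; as $H$ is abelian of order invertible in $\Bk$ and $\Bk$ is algebraically closed, all of its simple modules are one-dimensional. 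Hence $W|_{R(\b)}\cong L$, i.e. the multiplicity is one.

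Finally I would split into cases. If $t=1$, then $H=\ZZ/\Bn\ZZ$, $S|_{R(\b)}\cong L$ is simple, and $S=(L,\f)$ with $L$ a simple $R(\b)$-module — the second alternative. If $t\ge 2$, then $S|_{R(\b)}\cong L\oplus\s^*L\oplus\cdots\oplus(\s^*)^{t-1}L$, and $\f$ realizes the cyclic rotation of these summands, acting as the identity on $t-1$ of the successive identifications $(\s^*)^kL=\s^*((\s^*)^{k-1}L)$ and by an isomorphism $(\s^*)^tL\isom L$ on the last one; comparing with the definition in 5.3, this says precisely that $(S,\f)$ is traceless (with $M=L$ and orbit length $t\ge 2$). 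The main obstacle is the multiplicity-one step: one must pin down that the relevant twisted group algebra is honestly untwisted, which is where algebraic closedness of $\Bk$ and $\ch\Bk\nmid\Bn$ are genuinely used; everything else is bookkeeping, including checking that the cyclic shape of $\f$ produced by induction matches the normalization in the definition of traceless, which can be arranged by a change of basis within $S|_{R(\b)}$.
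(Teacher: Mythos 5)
Your argument is correct, and it is essentially what the paper is invoking: Proposition 5.5 carries no proof in the text and is simply cited from [M, Thm.\ 3.6], whose argument (modeled on Lusztig's periodic-functor formalism) is the same Clifford-theoretic analysis of the crossed product $R(\b)\sharp(\ZZ/\Bn\ZZ)$ --- restrict a simple object to $R(\b)$, take the $\s$-orbit of a simple constituent, and split into the orbit-length-one case, giving $(L,\f)$ with $L$ simple, and the orbit-length-$\ge 2$ case, giving traceless.

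Two small refinements. The semisimplicity of $S|_{R(\b)}$ does not in fact need $\ch\Bk\nmid\Bn$: Clifford's socle argument works once $S|_{R(\b)}$ has a simple submodule, and this holds because simple graded $R(\b)\sharp(\ZZ/\Bn\ZZ)$-modules are finite-dimensional (simple graded $R(\b)$-modules are finite-dimensional and the crossed product is a free $R(\b)$-module of finite rank). The hypotheses $\ch\Bk\nmid\Bn$ and $\Bk=\bar\Bk$ enter precisely where you later locate them: in normalizing $\vf_0$ so that the $|H|$-fold composite is the identity (extracting a root in $\Bk$), and in making $\Bk[H]$ split semisimple with one-dimensional simples, whence multiplicity one. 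Your matching of the $t\ge 2$ case with the traceless condition of 5.3 is also fine --- that definition only requires \emph{some} isomorphism $(\s^*)^tM\isom M$ on the last summand, so no further normalization of the cyclic $\f$ produced by induction is actually needed.
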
  

\para{5.6.}
For an $R(\b)$-module $M$ and an $R(\g)$-module $N$, define an $R(\b + \g)$-module 
$M\circ N$ by
\begin{equation*}
\tag{5.6.1}  
M\circ N = R(\b + \g)e(\b,\g)\otimes_{R(\b)\otimes R(\g)}M \otimes N, 
\end{equation*}
where $e(\b,\g) = \sum_{\nu \in I^{\b}, \nu' \in I^{\g}}e(\nu,\nu')$. 
\par
Now assume that $\b, \g \in Q_+^{\s}$, and take
$(M,\f) \in \SC_{\b}, (N, \f') \in \SC_{\g}$.
We define a convolution product of $(M, \f)$ and $N, \f')$ by 
\begin{equation*}
\tag{5.6.2}  
(M,\f) \circ (N,\f') = (M\circ N, \f\circ \f'),
\end{equation*}  
where $\f\circ \f'$ is the composite of the map
$\s^*M \circ \s^*N \to M \circ N$ induced from $\f \otimes \f'$ by (5.6.1), 
and the canonical isomorphism $\s^*(M \circ N) \simeq \s^*M \circ \s^*N$. 
\par
For $\b,\g \in Q_+^{\s}$, the automorphism $\s$ on $R(\b)$ and $R(\g)$ induces
an automorphism $\s$ on $R(\b) \otimes R(\g)$ by $\s(u \otimes w) = \s(u)\otimes \s(w)$.
Thus one can consider the category of graded representations of
$(R(\b)\otimes R(\g))\sharp(\ZZ/\Bn\ZZ)$, which we denote by $\SC_{\b \sqcup \g}$.
We can define full subcategories $\SP_{\b \sqcup \g}, \SL_{\b \sqcup \g}$
of $\SC_{\b\sqcup \g}$ similarly to $\SP_{\b}$ and $\SL_{\b}$ for $\SC_{\b}$. 
\par
The convolution product (5.6.2) gives an induction functor
$\Ind_{\b,\g} : \SC_{\b\sqcup\g} \to \SC_{\b+\g}$.
On the other hand, for each $(M, \f) \in \SC_{\b+\g}$, we define
\begin{equation*}
\tag{5.6.3}  
\Res_{\b,\g}(M,\f) = (e(\b,\g)M, \f').
\end{equation*}
Here $\Res M = e(\b,\g)M$ has a natural structure of an $R(\b)\otimes R(\g)$-module,
and we have the canonical isomorphism $\s^*(\Res M) \isom \Res (\s^*M)$ since
$e(\b,\g)$ is $\s$-invariant, which induces an isomorphism
$\f' : \s^*(\Res M) \isom \Res (\s^*M) \isom \Res M$.  
One can check that $\Res_{\b,\g}(M,\f)$ belongs to $\SC_{\b \sqcup \g}$, and we obtain
the restriction functor $\Res_{\b, \g} : \SC_{\b + \g} \to \SC_{\b \sqcup \g}$. 
\par
The induction functor induces functors,
$\Ind_{\b,\g}: \SP_{\b \sqcup \g} \to \SP_{\b + \g}$ and
$\Ind_{\b,\g}: \SL_{\b \sqcup \g} \to \SL_{\b + \g}$.
Also the restriction functor induces functors,
$\Res_{\b,\g}: \SP_{\b + \g} \to \SP_{\b \sqcup \g}$ and
$\Res_{\b,\g}: \SL_{\b + \g} \to \SL_{\b \sqcup \g}$ (see \cite [Thm. 4.6]{M}).
\par
The following is known by \cite [Thm. 4.3]{M}.

\begin{prop}  
The induction functor and the restriction
functor form an adjoint pair of exact functors between
$\SC_{\b \sqcup \g}$ and $\SC_{\b + \g}$, namely we have
\begin{align*}
\tag{5.7.1}  
 (\Res_{\b,\g}\wt M, \wt N) = (\wt M, \Ind_{\b,\g}\wt N)
\end{align*}
for $\wt M  = (M,\f) \in \SC_{\b+\g}, \wt N  = (N, \f') \in \SC_{\b \sqcup \g}$. 
\end{prop}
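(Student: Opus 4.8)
The plan is to deduce the pairing identity (5.7.1) from the fact, already used in \cite{M}, that $e(\b,\g)\in R(\b+\g)$ is fixed both by the anti-involution $\psi$ and by the automorphism $\s$, together with the ``sweet bimodule'' property: $R(\b+\g)e(\b,\g)$ is finitely generated and projective (in fact free of finite rank) both as a left $R(\b+\g)$-module and as a right $R(\b)\otimes R(\g)$-module. By \cite[Thm.~4.6]{M} this guarantees that $\Res_{\b,\g}\wt M\in\SP_{\b\sqcup\g}$ and $\Ind_{\b,\g}\wt N\in\SP_{\b+\g}$, so both sides of (5.7.1) are defined; exactness of $\Ind_{\b,\g}$ and $\Res_{\b,\g}$ follows from, respectively, the flatness of $R(\b+\g)e(\b,\g)$ and the idempotency of $e(\b,\g)$, carried through the exact and faithful forgetful functors $\SC_{\b}\to R(\b)\Mod$, which reflect short exact sequences.

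For the identity itself, I would exhibit a canonical isomorphism of graded vector spaces between the two spaces whose weighted $\s$-traces give the two sides of (5.7.1), and then check that the endomorphisms being traced match. Write $\wt M=(M,\f)$, $\wt N=(N,\f')$. On the $\Ind$ side the relevant space is $M^{\psi}\otimes_{R(\b+\g)}\bigl(R(\b+\g)e(\b,\g)\otimes_{R(\b)\otimes R(\g)}N\bigr)$; absorbing $R(\b+\g)e(\b,\g)$ into the first factor and using $\psi(e(\b,\g))=e(\b,\g)$ gives
\begin{equation*}
M^{\psi}\otimes_{R(\b+\g)}\bigl(R(\b+\g)e(\b,\g)\otimes_{R(\b)\otimes R(\g)}N\bigr)
 \isom \bigl(e(\b,\g)M\bigr)^{\psi}\otimes_{R(\b)\otimes R(\g)}N,
\end{equation*}
where the right-hand side is exactly the space computing $(\Res_{\b,\g}\wt M,\wt N)$ relative to the anti-involution $\psi\otimes\psi$ on $R(\b)\otimes R(\g)$. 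In particular the two graded vector spaces agree in each degree, so no grading shift intervenes and (5.7.1) can hold on the nose.

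It remains to match the two linear maps: on the left, $\f$ tensored with the isomorphism attached to $\Ind_{\b,\g}\wt N$ by (5.6.2); on the right, the isomorphism attached to $\Res_{\b,\g}\wt M$ by (5.6.3) tensored with $\f'$. I expect this to be the only genuinely nontrivial step. One has to unwind the induced isomorphism of (5.6.2) --- in particular the canonical identification $\s^{*}(M\circ N)\simeq\s^{*}M\circ\s^{*}N$ built into it --- and the isomorphism of (5.6.3), which uses $\s(e(\b,\g))=e(\b,\g)$, and verify by a diagram chase that under the displayed isomorphism the two endomorphisms coincide; since $e(\b,\g)$ is $\s$-fixed, no twist or correction term appears. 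Taking graded traces of these equal endomorphisms as in (5.4.1) then yields (5.7.1). Conceptually this is just the $\s$-equivariant, Lefschetz-type refinement of the classical Khovanov--Lauda adjunction $(\Res_{\b,\g}M,N)=(M,\Ind_{\b,\g}N)$ for KLR algebras (\cite{KL1}), and the main obstacle is precisely the bookkeeping required to transport the two $\s$-equivariant structures through the tensor--hom manipulations, rather than anything conceptual.
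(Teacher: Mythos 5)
The paper gives no proof of this proposition: it is quoted verbatim from McNamara, with the citation ``The following is known by \cite[Thm.~4.3]{M}''. So there is no argument in the paper for you to match; what you have written is a reconstruction of the cited result. As such it is a sensible one. The two pillars you lean on --- that $e(\b,\g)$ is fixed by both $\psi$ and $\s$, and that $R(\b+\g)e(\b,\g)$ is a sweet bimodule --- are exactly what make the classical Khovanov--Lauda adjunction pass to the equivariant setting, and your isomorphism $M^{\psi}\otimes_{R(\b+\g)}\bigl(R(\b+\g)e(\b,\g)\otimes_{R(\b)\otimes R(\g)}N\bigr)\simeq\bigl(e(\b,\g)M\bigr)^{\psi}\otimes_{R(\b)\otimes R(\g)}N$ is the right skeleton; since $\psi$ is degree-preserving, it is indeed degree-preserving, so no grading shift appears. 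Your remark that the statement, as written, quantifies over all of $\SC$ while the pairing (5.4.1) is only set up on $\SP$, and that one should read (5.7.1) on projectives via \cite[Thm.~4.6]{M}, is the correct reading and is consistent with how the paper uses the proposition in 5.17.

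The one place where you defer rather than finish is the equivariant content itself: checking that, under your isomorphism, the endomorphism $\f\otimes(\f\circ\f')_{\Ind}$ on the left matches $\f_{\Res}\otimes\f'$ on the right. This is the whole point of the ``$\s$-equivariant refinement'' and it is not pure bookkeeping to the extent you suggest, because $\s^{*}$ twists the module actions, so the induced $\s$-structures on $\Res M$ and $\Ind N$ are defined through nontrivial identifications (the canonical isomorphism $\s^{*}(A\circ B)\simeq\s^{*}A\circ\s^{*}B$, the $\s$-invariance of $e(\b,\g)$, etc.). You have correctly isolated the inputs ($\psi(e(\b,\g))=e(\b,\g)$, $\s(e(\b,\g))=e(\b,\g)$) and I expect the diagram chase to close, but to count this as a complete proof you would need to actually write that chase out. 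In a paper one would either do that or, as the authors do here, simply cite \cite[Thm.~4.3]{M} and move on.
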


\para{5.8.}
We have isomorphisms of $\ZZ[\z_{\Bn}, q,q\iv]$-modules
\begin{equation*}
  K(\SP_{\b})\otimes_{\ZZ[\z_{\Bn},q^{\pm 1}]}K(\SP_{\g}) \simeq K(\SP_{\b \sqcup \g}), \quad
  K(\SL_{\b})\otimes_{\ZZ[\z_{\Bn},q^{\pm 1}]}K(\SL_{\g}) \simeq K(\SL_{\b \sqcup \g}),   
\end{equation*}
and the induction functors $\Ind_{\b,\g}$ induce homomorphisms
\begin{equation*}
  K(\SP_{\b})\otimes_{\ZZ[\z_{\Bn}, q^{\pm 1}]}K(\SP_{\g}) \to K(\SP_{\b+\g}), \quad
  K(\SL_{\b}) \otimes_{\ZZ[\z_{\Bn}, q^{\pm 1}]}K(\SL_{\g}) \to K(\SL_{\b+\g}).
\end{equation*}

$K(\SP)$ and $K(\SL)$ turn out to be associative algebras over $\ZZ[\z_{\Bn}, q, q\iv]$
with respect to this product. 

\para{5.9.}
Let $P$ be a finitely generated projective $R(\b)$-module. We define a dual
module $\BD P$ by
\begin{equation*}
\BD P = \Hom_{R(\b)}(P, R(\b)) := \bigoplus_{n \in \ZZ}\Hom_{R(\b)}(q^nP, R(\b))_0,
\end{equation*}
where $\Hom_{R(\b)}(-,-)_0$ is the space of degree preserving homomorphisms.
$\BD P$ is a graded $\Bk$-vector space. $R(\b)$ acts on $\BD P$ by
$x(\la)(m) = \la(\psi(x)m)$ for $x \in R(\b), \la \in \BD P, m \in P$. 
Then $\BD P$ is a finitely generated projective $R(\b)$-module. 

\par
Let $M$ be a finite dimensional $R(\b)$-module. We define a dual $\BD M$ by
\begin{equation*}
\BD M = \Hom_{\Bk}(M, \Bk) := \bigoplus_{n \in \ZZ}\Hom_{\Bk}(q^nM, \Bk)_0.
\end{equation*}
Then $\BD M$ has a structure of $R(\b)$-module as in the case $\BD P$, and
$\BD M $ is a finite dimensional $R(\b)$-module.
\par
$\BD$ is a contravariant functor on $R(\b)\Mod$, and for any morphism
$f : M \to N$ between finitely generated projective, or finite dimensional
$R(\b)$-modules, there is an induced morphism $\BD(f) : \BD N \to \BD M$.
Now we define the dual of an object $(M, \f)$ in $\SL_{\b}$ or $\SP_{\b}$
by the formula
\begin{equation*}
\BD(M,\f) = (\BD M, (\BD(\f)\iv),
\end{equation*}
where $\BD(\f) : \BD M \isom \BD(\s^*M) = \s^*(\BD M)$.
Then $\BD$ gives a contravariant functor on $\SL_{\b}$ and $\SP_{\b}$
(see \cite[5]{M}).

\par
$(M,\f)$ in $\SL_{\b}$ or in $\SP_{\b}$ is said to be self-dual if
$\BD(M,\f) \simeq (M,\f)$ in that category. Note that if $(M,\f)$ is self-dual,
then $\f$ is uniquely determined by $M$ in the case where $\Bn$ is odd, is unique up to
$\pm 1$ in the case where $\Bn$ is even.
\par
Let $\wt{\ul\bB}_{\b}^*$ be the set of elements in $K(\SL_{\b})$ consisting of
isomorphism classes of self-dual objects $(M,\f)$ such that $M$ is a finite dimensional
simple $R(\b)$-module. It was proved by \cite[Thm.10.8]{M}, by investigating
the crystal structure of
$\wt{\ul\bB}^* = \bigsqcup_{\b \in Q_+^{\s}}\wt{\ul\bB}_{\b}^*$,
that there exists a $\ZZ[\z_{\Bn}, q,q\iv]$-basis $\ul\bB^*_{\b}$ of $K(\SL_{\b})$
such that $\ul\bB^*_{\b} \subset \wt{\ul\bB}_{\b}^*$.
Then the $\ZZ[\z_{\Bn}, q,q\iv]$-basis $\ul\bB_{\b}$ of $K(\SP_{\b})$ is defined
as the set of projective cover of elements in $\ul\bB_{\b}^*$, which  
consist of finitely generated projective indecomposable, self-dual objects in $\SP_{\b}$.  
We set $\ul\bB = \bigoplus_{\b \in Q_+^{\s}}\ul\bB_{\b}$. 

\para{5.10.}
Let $j= \{i_1, \dots, i_t\} \in J$ be a $\s$-orbit in $I$, and set
$\a_j = \a_{i_1} + \cdots + \a_{i_t}$. Then $\a_j \in Q_+^{\s}$.
The algebra $R(\a_j)$ has a unique irreducible module $L_j$.  Let $P_j$
be the projective cover of $L_j$.  $L_j$ and $P_j$ are constructed
explicitly in \cite[7]{M}.
In particular, $P_j$ is written as $P_j = \bigoplus_{w \in S_t}\Bk[x_1, \dots, x_t][w]$,
where $[w] = \t_we(i_1, \dots, i_t)$.  Thus $P_j$ is a free $\Bk[x_1, \dots, x_t]$-module
with basis $[w]$, where $e(\nu')$ acts on $[w]$ by 1 if $\nu' = w\nu$, and by 0 otherwise,
$\t_i$ acts on $[w]$ by $[s_i(w)]$.
$L_j \simeq \bigoplus_{w \in S_t}\Bk[w]$, where $e(\nu')$ and $\t_i$ acts on $[w]$ as above,
and $x_k$ acts on $[w]$ as 0.
\par
Since $\a_j$ is $\s$-stable, one can consider the categories $\SC_{\a_j}, \SL_{\a_j}$ and
$\SP_{\a_j}$. 
We define $\f_j : \s^*L_j \isom L_j$ by $\f_j[w] = [\s(w)]$, (here $[w]$ is identified
with the permutation $(i_1', \dots, i_t')$ of $(i_1, \dots, i_t)$ on which $\s$ acts).
Define $L(j) = (L_j, \f_j)$, which is a simple object in $\SL_{\a_j}$. We define
$P(j)$ as the projective cover of $L(j)$, which lies in $\SP_{\a_j}$.
$P(j)$ is written as $P(j) = (P_j, \f'_j)$, where $P_j$ is the projective cover of
$L_j$, and $\f'_j :\s^*P_j \isom P_j$
is induced from  $\f_j : \s^*L_j \isom L_j$.
\par
For $n \ge 1$, we define $L(j)^{(n)} \in \SL_{n\a_j}$ by
\begin{equation*}
\tag{5.10.1}  
L(j)^{(n)} = q_j^{\binom{n}{2}}L(j)^{\circ n},
\end{equation*}
where $A^{\circ n}$ denotes the convolution product
$A \circ A \circ \cdots \circ A$ ($n$-times).
By \cite[Lemma 7.2]{M}, $L(j)^{(n)}$ is a self-dual object in $\SL_{n\a_j}$.
Note that $L(j)^{(n)} = (L_j^{(n)}, \f_{nj})$, where
$L_j^{(n)} = q_j^{\binom{n}{2}}L_j^{\circ n}$, and $\f_{nj}: \s^*L_j^{(n)} \isom L_j^{(n)}$
is induced from $\f_j : \s^*L_j \isom L_j$. 
Let $P(j)^{(n)}$ be the projective cover of $L(j)^{(n)}$.
We have $P(j)^{(n)} = (P_j^{(n)}, \f'_{nj})$, where
$P_j^{(n)}$ is the projective cover of $L_j^{(n)}$,
and $\f'_{nj}: \s^*P_j^{(n)} \isom P_j^{(n)}$ is induced from
$\f_{nj}: \s^*L_j^{(n)} \isom L_j^{(n)}$. 

\para{5.11.}
Recall that $K(\SP)$ is an associative algebra over $\ZZ[\z_{\Bn}, q,q\iv]$
with basis $\ul\bB$. Let $\wt\BA$ be the smallest subring of $\ZZ[\z_{\Bn}, q, q\iv]$
containing $\BA = \ZZ[q,q\iv]$
such that all the structure constants of the algebra $K(\SP)$ with respect to $\bB$
are contained in $\wt\BA$.  Let ${}_{\wt\BA}K(\SP)$ be the $\wt\BA$-submodule of $K(\SP)$
spanned by $\ul\bB$.  Then ${}_{\wt\BA}K(\SP)$ is the $\wt\BA$-subalgebra of
$K(\SP)$.
Set ${}_{\wt\BA}\ul\BU_q^- = \wt\BA\otimes_{\BA}{}_{\BA}\ul\BU_q^-$.
\par
We extend the bilinear pairing $(\ ,\ )$ on $K(\SP_{\b})$ to the bilinear pairing
on $K(\SP)$ by defining $([M], [N]) = 0$ if $M \in \SP_{\b}, N \in \SP_{\g}$
with $\b \ne \g$. 
\par
The following result was proved by \cite [Thm.6.1]{M}.

\begin{thm}  
There exists a unique $\wt\BA$-algebra isomorphism
\begin{equation*}  
\g : {}_{\wt\BA}\ul\BU_q^- \isom {}_{\wt\BA}K(\SP)
\end{equation*}
such that $\g(f_j^{(n)}) = [P(j)^{(n)}]$. Moreover, $\g$ is an
isometry with respect to the pairing on $K(\SP)$ given in 5.11 and
the inner product on $\ul\BU_q^-$ defined in 1.3.  
\end{thm}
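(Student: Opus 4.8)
The strategy, following McNamara, is to realize $\g$ as a categorification morphism: send the generators $f_j^{(n)}$ to the classes $[P(j)^{(n)}]$, verify that the defining relations of ${}_{\wt\BA}\ul\BU_q^-$ are categorified so that $\g$ becomes a well-defined $\wt\BA$-algebra homomorphism, and then show that $\g$ is at once surjective and an isometry, whence bijective. Throughout we use the structural facts already at hand: $K(\SP)=\bigoplus_{\b}K(\SP_{\b})$ is a free $\wt\BA$-module and an associative $\wt\BA$-algebra under $\Ind$ (5.8), it carries the bilinear pairing (5.4.1), and $\Ind_{\b,\g}$, $\Res_{\b,\g}$ form a biadjoint pair of exact functors between $\SC_{\b\sqcup\g}$ and $\SC_{\b+\g}$ (Proposition 5.7).

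\emph{Step 1 (well-definedness).} Since ${}_{\wt\BA}\ul\BU_q^-$ is presented by the generators $f_j^{(n)}$ ($j\in J$, $n\ge0$) subject to the divided-power identities and the $q$-Serre relations (those of 1.2 for the datum $\ul X$), it suffices to verify these identities among the $[P(j)^{(n)}]$ in $K(\SP)$. The divided-power identities are immediate from the explicit description of $P(j)$ as a free module over a polynomial ring with basis indexed by a symmetric group (5.10) together with the definition (5.10.1) of $P(j)^{(n)}$. The $q$-Serre relation
\[
\sum_{k+k'=1-a_{jj'}}(-1)^{k}\,[P(j)^{(k)}]\,[P(j')]\,[P(j)^{(k')}] = 0
\]
is the categorified Serre relation: one applies $\Res$ to each induced summand, expands via the Mackey filtration for $\Res\circ\Ind$, and checks that the alternating sum telescopes to zero, exactly as in the ordinary KLR case, the only extra bookkeeping being that each short exact sequence in the Mackey filtration must be promoted to one in $\SC_{\b}$ compatible with the $\s$-equivariant structure, and that the resulting coefficients, a priori in $\ZZ[\z_{\Bn},q^{\pm1}]$, lie in $\wt\BA$ by the very definition of $\wt\BA$ in 5.11.

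\emph{Step 2 (surjectivity).} We show by induction on $|\b|$ that $K(\SP_{\b})\subset\Im\g$. It is enough to see that every indecomposable projective self-dual $P\in\SP_{\b}$ with $\b\ne0$ is, up to grading shift, a direct summand of $P(j)\circ P'$ for some orbit $j\in J$ and some $P'\in\SP_{\b-\a_j}$. For this we use the crystal-theoretic analysis of $\wt{\ul\bB}^*$ recorded in 5.9: the simple top of $P$ has, for a suitable $j$, nonzero $\Res_{\a_j,\b-\a_j}$, and biadjunction (Proposition 5.7) then forces $P$ to occur as a summand of an induction $P(j)\circ P'$ with $|\b-\a_j|<|\b|$. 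By induction $[P']\in\Im\g$, and since $[P(j)]=\g(f_j)$ while $\Ind$ is the product of $K(\SP)$, we get $[P]\in\Im\g$. The divided powers $[P(j)^{(n)}]=\g(f_j^{(n)})$ lie in the image by construction.

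\emph{Step 3 (isometry, hence injectivity and uniqueness).} The inner product on $\ul\BU_q^-$ is characterised by $(1,1)=1$, $(f_j,f_{j'})=\d_{jj'}(1-q_j^2)\iv$, and the adjunction $(xx',y)=(x\otimes x',r(y))$ for the twisted product on $\ul\BU_q^-\otimes\ul\BU_q^-$. On the $K$-theory side, Frobenius reciprocity (Proposition 5.7) gives $([P]\cdot[P'],[Q])=([P]\otimes[P'],[\Res_{\b,\g}Q])$, so that, with the standard grading-shift conventions built into $\Res_{\b,\g}$ and into the twisted multiplication of 1.3, $\Res$ categorifies the relevant component of $r$ and $\g\otimes\g$ intertwines the two pairings on the tensor squares. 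A direct computation of the graded dimension of the space underlying the pairing $([P(j)],[P(j)])$, from the presentation of $P(j)$ in 5.10, yields $([P(j)],[P(j)])=(1-q_j^2)\iv=(f_j,f_j)$, while $([P(j)],[P(j')])=0$ for $j\ne j'$ since the relevant spaces vanish; feeding these base cases into the adjunction and inducting on weight proves $(\g(x),\g(y))=(x,y)$ for all $x,y$. Because ${}_{\wt\BA}\ul\BU_q^-$ spans $\ul\BU_q^-$ over $\QQ(q)$ and the inner product there is non-degenerate, $\ker\g=0$; with Step 2, $\g$ is an isomorphism, and uniqueness is clear since the $f_j^{(n)}$ generate and their images are prescribed. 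The technical core of the argument is Step 1 — proving the categorified $q$-Serre relation inside the $\s$-twisted equivariant category $\SC_{\b}$, i.e.\ carrying the whole Mackey filtration through the group-algebra factor $\Bk[\ZZ/\Bn\ZZ]$ and checking that the $\z_{\Bn}$-scalars in $K(\SP)$ conspire exactly as in the untwisted Khovanov--Lauda computation; a secondary difficulty is the crystal input from 5.9 underpinning the cuspidal induction in Step 2.
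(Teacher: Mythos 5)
This theorem is cited in the paper from McNamara \cite{M} (Thm.\ 6.1) with no proof supplied, so there is no paper proof to compare yours against; what follows is a review of the proposal on its own merits. Steps 1 and 3 capture the right structural ideas: the pairing identity (5.7.1) (you call it a biadjunction — the paper states only a one-sided adjoint pair at the level of the form) does realize $\Res_{\b,\g}$ as a categorified component of the twisted coproduct $r$, and the computation $([P(j)],[P(j)])=(1-q_j^2)^{-1}$ is exactly the base datum the paper later attributes to McNamara's proof of this theorem when it invokes it in proving Theorem 5.20; the inference $\ker\g=0$ from isometry plus non-degeneracy of the form on $\ul\BU_q^-$ is sound.

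The genuine gap is in Step 2. From the fact that every indecomposable self-dual projective $P\in\SP_\b$ occurs (up to shift) as a summand of some $P(j)\circ P'$ with $[P']\in\Im\g$, you get $[P(j)\circ P']\in\Im\g$, but $[P(j)\circ P']=[P]+\sum_Q m_Q[Q]$ with the other $Q$ indecomposable of the \emph{same} weight $\b$, so the outer induction on $|\b|$ gives no handle on them and the conclusion $[P]\in\Im\g$ does not follow. One must run a secondary induction within a fixed weight, using a partial order on indecomposable projectives (coming from the crystal/cuspidal stratification of 5.9) under which the transition matrix from the monomials $[P(j_1)^{(n_1)}]\cdots[P(j_k)^{(n_k)}]$ to $\ul\bB_\b$ is unitriangular over $\wt\BA$; as written your induction is circular at the crucial step. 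A smaller issue: you call the divided-power compatibility $[P(j)^{\circ n}]=[n]^!_j[P(j)^{(n)}]$ ``immediate from 5.10,'' but this hides a real computation (decomposing $P_j^{\circ n}$ into shifts of $P_j^{(n)}$ and tracking the $\f$-structure through the decomposition); note that the paper in 5.16 \emph{derives} this identity from Theorem 5.12 rather than using it as an input, so it must be established independently by a module-theoretic argument before it can enter the proof.
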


\remarks{5.13.} 
(i) \ It was proved in \cite[Lemma 11.3]{M} that $\wt\BA$ is contained
in $\ZZ[\z_{\Bn} + \z_{\Bn}\iv, q,q\iv]$. Hence if $\ul\BU_q^-$ is
irreducible of finite type, then we have $\wt\BA = \BA$. 
In the general case, for any $\ul\BU_q^-$ of Kac-Moody type, it was
proved by \cite{MSZ2} that $\wt\BA = \BA$.  Thus we have an isomorphism
\begin{equation*}
\tag{5.13.1}  
  \g : {}_{\BA}\ul\BU_q^- \isom {}_{\BA}K(\SP).
\end{equation*}  
\par
(ii)  \ Assume that $\Bk$ is an algebraically closed field of
characteristic zero.  It was shown in \cite [Thm. 5.12]{MSZ2} that, under the isomorphism
$\g$ in (5.13.1), 
$\g\iv(\ul\bB)$ coincides with Kashiwara's global crystal basis of $\ul\BU_q^-$
given in \cite {Ka},
which also coincides with Lusztig's canonical basis given in \cite {L-book}.
Hence for $\ul\BU_q^-$ of Kac-Moody type, Lusztig's canonical basis coincides with
Kashiwara's global crystal basis. 

\para{5.14.}
Let $\b_1, \dots, \b_s, \g_1, \dots, \g_t \in Q_+^{\s}$ be such that
$\sum_i\b_i = \sum_j\g_j$.
The induction functor $\Ind_{\b_1, \dots, \b_s} :
\SC_{\b_1 \sqcup \cdots \sqcup \b_s} \to  \SC_{\b_1 + \cdots + \b_s}$,
and the restriction functor
$\Res_{\g_1, \dots, \g_t}: \SC_{\g_1 + \cdots + \g_t} \to \SC_{\g_1\sqcup \cdots \sqcup \g_t}$
can be defined. 
Thus we have an exact functor

\begin{equation*}
\tag{5.14.1}
\Res_{\g_1,\dots, \g_t}\circ \Ind_{\b_1, \dots, \b_s}
  : \SC_{\b_1\sqcup \cdots \sqcup \b_s} \to \SC_{\g_1 \sqcup \cdots \sqcup \g_t}. 
\end{equation*}  

In the case where $\s$ is trivial, it was proved in \cite[Prop. 2.18]{KL1} that
this composite functor has a natural filtration by exact functors, which is
called the Mackey filtration.  The Mackey filtration was extended by
McNamara \cite{M} to the case where $\s$ is non-trivial.
Note that a functor $\SC_{\b} \to \SC_{\g}$
is said to be traceless if its image lies in the full subcategory
of traceless objects in $\SC_{\g}$.  

\begin{thm} [{\cite[ Thm. 4.5]{M}}]  
Assume that $\sum_{1 \le i \le s} \b_i = \sum_{1 \le j \le t}\g_j$.  
Then the functor $\Res_{\g_1,\dots, \g_t}\circ \Ind_{\b_1, \dots, \b_s}$ has a natural filtration
by exact functors.   
The subquotient functors in this filtration which are not traceless
are indexed by a matrix $\Bxi = (\xi_{ij})_{1 \le i \le s, 1 \le j \le t}$, with
$\xi_{ij} \in Q_+^{\s}$, satisfying the condition that
\begin{equation*}
\tag{5.15.1}  
\b_i = \sum_j\xi_{ij},   \qquad \g_j = \sum_i\xi_{ij}. 
\end{equation*}
The functor attached to $\Bxi$ is isomorphic, up to a grading shift, to the composition
\begin{equation*}
\tag{5.15.2}
\Ind_{\Bxi}^{\g} \circ \tau \circ \Res_{\Bxi}^{\b}.
\end{equation*}
Here, $\Res^{\b}_{\Bxi} : \otimes_i \SC_{\b_i} \to \otimes_i(\otimes_j\SC_{\xi_{ij}})$
is given by the tensor product of
$\Res_{\xi_{i\bullet}} : \SC_{\b_i} \to \otimes_j\SC_{\xi_{ij}}$,
$\tau : \otimes_i(\otimes_j\SC_{\xi_{ij}}) \to \otimes_j(\otimes_i\SC_{\xi_{ij}})$ is
given by permuting the tensor factors,
and $\Ind_{\Bxi}^{\g} : \otimes_j(\otimes_i\SC_{\xi_{ij}}) \to \otimes_j\SC_{\g_j}$
is the tensor product of $\Ind_{\xi_{\bullet j}} : \otimes_i\SC_{\xi_{ij}} \to \SC_{\g_j}$.
\end{thm}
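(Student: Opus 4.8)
The plan is to deduce Theorem 5.15 from the non-equivariant Mackey filtration of Khovanov--Lauda \cite[Prop. 2.18]{KL1} by transporting it to the $\s$-equivariant category, following McNamara \cite{M}. First I would recall the non-equivariant picture. With $|\b| = n$ and $\b = \sum_i\b_i = \sum_j\g_j$, the composite $\Res_{\g_1,\dots,\g_t}\circ\Ind_{\b_1,\dots,\b_s}$ applied to $M_1\otimes\cdots\otimes M_s$ is the tensor product of $M_1\otimes\cdots\otimes M_s$ with the $\bigl(\bigotimes_jR(\g_j),\bigotimes_iR(\b_i)\bigr)$-bimodule $e(\g_1,\dots,\g_t)\,R(\b)\,e(\b_1,\dots,\b_s)$. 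Using the basis theorem for $R(\b)$ (expand $e(\b_\bullet)$ and $e(\g_\bullet)$ into sums of $e(\nu)$ and sort the spanning set $\t_w x^a e(\nu)$ by the double coset of $w$ refined by colour), this bimodule acquires a filtration, natural in the bimodule structure, indexed by matrices $\Bxi = (\xi_{ij})$ with $\xi_{ij}\in Q_+$ and $\b_i = \sum_j\xi_{ij}$, $\g_j = \sum_i\xi_{ij}$; each term and each subquotient is free as a right module, so tensoring with $M_1\otimes\cdots\otimes M_s$ is exact, and the $\Bxi$-subquotient realises, up to an explicit power of $q$, the functor $\Ind_{\Bxi}^{\g}\circ\tau\circ\Res_{\Bxi}^{\b}$.

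Next I would bring in $\s$. By the relation $Q_{\s(i),\s(i')} = Q_{i,i'}$, the automorphism $\s$ of $R(\b)$ fixes every $x_k$ and $\t_k$ and sends $e(\nu)\mapsto e(\s(\nu))$. Since each $\b_i,\g_j$ lies in $Q_+^{\s}$, the parabolic idempotents $e(\b_1,\dots,\b_s)$ and $e(\g_1,\dots,\g_t)$ are $\s$-invariant, so $\s$ acts on the bimodule $e(\g_\bullet)R(\b)e(\b_\bullet)$, and on the index set it acts by $\Bxi\mapsto\s(\Bxi)$, $(\s\Bxi)_{ij} = \s(\xi_{ij})$, with fixed points exactly those $\Bxi$ having every $\xi_{ij}\in Q_+^{\s}$. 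The partial order on double cosets underlying the filtration of \cite{KL1} is defined purely combinatorially, hence is preserved by $\s$, so the matrices $\Bxi$ can be enumerated compatibly with that order in such a way that each $\s$-orbit occupies a consecutive block of indices. This gives a filtration of $e(\g_\bullet)R(\b)e(\b_\bullet)$ by $\s$-stable sub-bimodules, and after tensoring it yields a natural filtration of $\Res_{\g_1,\dots,\g_t}\circ\Ind_{\b_1,\dots,\b_s}$ on $\SC_{\b_1\sqcup\cdots\sqcup\b_s}$ by exact functors.

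Then I would identify the subquotients inside $\SC$. For a $\s$-stable $\Bxi$ there is a single $\s$-invariant filtration jump; the bimodule isomorphism of \cite{KL1} identifying it with the bimodule of $\Ind_{\Bxi}^{\g}\circ\tau\circ\Res_{\Bxi}^{\b}$ is assembled from the intertwiners $\t_w$ and the (now $\s$-invariant) idempotents, hence is itself $\s$-equivariant; transporting the equivariant structure through it presents this subquotient, up to the stated grading shift, as $\Ind_{\Bxi}^{\g}\circ\tau\circ\Res_{\Bxi}^{\b}$ equipped with its canonical equivariant datum, where $\Res_{\Bxi}^{\b} = \bigotimes_i\Res_{\xi_{i\bullet}}$, $\tau$ permutes the tensor factors, and $\Ind_{\Bxi}^{\g} = \bigotimes_j\Ind_{\xi_{\bullet j}}$. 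If instead $\Bxi$ sits in a $\s$-orbit of size $r\ge 2$, the $r$ consecutive jumps it governs produce module subquotients cyclically permuted by $\s$, so the single jump from $F_{k-1}$ to $F_{k+r-1}$ in $\SC$ has the shape $V\oplus\s^*V\oplus\cdots\oplus(\s^*)^{r-1}V$ with $\f$ the cyclic isomorphism, i.e. it is traceless in the sense of 5.3. Hence the non-traceless subquotient functors are indexed precisely by the matrices $\Bxi$ satisfying (5.15.1), as claimed.

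The part I expect to be most delicate is the bookkeeping in the last two paragraphs: checking that the combinatorial order on double cosets is genuinely $\s$-stable (so the filtration terms can be taken $\s$-invariant), and that the explicit Khovanov--Lauda identification of each $\s$-fixed subquotient carries the right equivariant structure and the right power of $q$ --- a diagram chase through the $\t_w$ and the quadratic grading shift. I would also note that the hypothesis $\ch\Bk\nmid\Bn$ is used precisely to ensure that $R(\b)\sharp(\ZZ/\Bn\ZZ)$ is well behaved and that the traceless objects form a thick subcategory, so that the statement about traceless subquotients is meaningful in $K(\SC)$. Granting these points, Theorem 5.15 is the $\s$-equivariant transfer of \cite[Prop. 2.18]{KL1}, as carried out in \cite{M}.
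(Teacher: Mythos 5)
Your proposal follows essentially the same route as the paper's proof: both realise $\Res_{\g_\bullet}\circ\Ind_{\b_\bullet}$ as tensoring with the bimodule $e(\g_1,\dots,\g_t)R(\a)e(\b_1,\dots,\b_s)$, filter that bimodule by double-coset representatives $\t_{w(\Bxi)}$ indexed by the matrices $\Bxi$ subject to (5.15.1) with a $\s$-stable partial order, identify the $\Bxi$-subquotient with $\Ind_{\Bxi}^{\g}\circ\tau\circ\Res_{\Bxi}^{\b}$ up to a grading shift, and observe that grouping a $\s$-orbit of non-fixed matrices yields a traceless subquotient. The paper spells out the segments $A_{ij}$, the permutation $w(\Bxi)$, and the modules $B(\le\Bxi)$, $B(<\Bxi)$ explicitly where you cite Khovanov--Lauda and McNamara for these constructions, but the argument is the same.
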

\begin{proof}
Following McNamara \cite{M}, we give an outline of the proof of Theorem 5.15.
Set $\a = \sum_{1 \le i \le s}\b_i = \sum_{1 \le j \le t}\g_j$ and $n = |\a|$.
Let $B = R(\a)$, and regard it as an
$(\otimes_jR(\g_j), \otimes_iR(\b_i))$-bimodule,
where $\otimes_iR(\b_i) = R(\b_1)\otimes \cdots \otimes R(\b_s)$, and
$\otimes_jR(\g_j) = R(\g_1)\otimes\cdots\otimes R(\g_t)$. 
Then for $M \in \SC_{\b_1\sqcup\cdots\sqcup\b_s}$, we have

\begin{align*}
  \Res_{\g_1, \dots,\g_t}\circ \Ind_{\b_1, \dots,\b_s}M
  &=  e(\g_1,\dots, \g_t)R(\a)e(\b_1,\dots, \b_s)
             \otimes_{\otimes_iR(\b_i)}M  \\
  &= B \otimes_{\otimes_iR(\b_i)}M,
\end{align*}
where
\begin{equation*}
e(\b_1, \dots, \b_s) = \sum_{\mu_1, \dots, \mu_s} e(\mu_1, \dots, \mu_s)
\end{equation*}
and $e(\mu_1, \dots, \mu_s) \in I^{\a}$ is a juxtaposition of
$\mu_k \in I^{\b_k}$.  $e(\g_1, \dots, \g_t)$ is defined similarly. 
\par
Let $\Bxi = (\xi_{ij})$ be the matrix given in (5.15.1).
We define a permutation $w = w(\Bxi) \in S_n$
as follows.
We arrange $\Bxi = (\xi_{ij})$ along the lexicographic order on $(i,j) \in \NN^2$,
\begin{equation*}
\tag{5.15.3}  
\xi_{11}, \dots, \xi_{1t}, \xi_{21}, \dots, \xi_{2t}, \dots, \xi_{s1}, \dots, \xi_{st},
\end{equation*}
and consider the partition $[1,n] = \bigsqcup_{i,j}A_{ij}$ into segments
associated to the sequence (5.15.3) with
\begin{equation*}
  A_{ij} = \{ x_{ij} +1, x_{ij}+ 2, \dots, x_{ij} + |\xi_{ij}|\}
\end{equation*}
in this order,
where $x_{ij} = \sum_{(i',j') < (i,j)}|\xi_{i'j'}|$.
We define $w = w(\Bxi) \in S_n$ by
a permutation sending the sequence $1,2, \dots, n$ to the sequence
\begin{equation*}
  A_{11}, \dots, A_{s1}, A_{12}, \dots, A_{s2},
       \dots, A_{1t}, \dots, A_{st}.
\end{equation*}
Define the element $u(\Bxi) = \tau_{w(\xi)} \in R(\a)$. 
Since $e(\g_1,\dots, \g_t)R(\a)e(\b_1, \dots, \b_s)$ is generated
by $\t_w$, as an $(\otimes_jR(\g_j), \otimes_iR(\b_i))$-bimodule, 
where
$w \in S_{\g_1, \dots, \g_t}\backslash S_n/S_{\b_1, \dots, \b_s}$
is given as $w = w(\Bxi)$,  
$B$ is generated as a bimodule by the elements $u(\Bxi)$.
\par
We define a partial order on the set of all the matrices $\Bxi = (\xi_{ij})$
as above, by the condition that   
$\Bxi \le \Bxi'$ if and only if there exists $(i,j)$ such that
$\xi_{i'j'} = \xi'_{i'j'}$
for any $(i',j') < (i,j)$ and that $\xi_{ij} \ne \xi'_{ij}$ with 
$\xi_{ij} - \xi'_{ij} \in Q_+$.

Let $B(\le \Bxi)$ (resp. $B(< \Bxi)$)  be the subbimodule of $B$
generated by all $u(\Bz)$ with $\Bz \le \Bxi$
(resp. $\Bz < \Bxi$). 
We define $B(\Bxi) = B(\le \Bxi)/B(< \Bxi)$. 
Then the functor of tensoring with the subquotient $B(\Bxi)$ coincides with
the composition $\Ind_{\xi}^{\g}\circ \tau \circ \Res_{\xi}^{\b}$, up to the grading
shift.  In particular, it is exact, and we have a filtration by exact subfunctors. 
In the case where $\s(\Bxi) = \Bxi$, the functor $B(\Bxi)\otimes -$ coincides with
the functor $\Ind_{\Bxi}^{\g}\circ \tau\circ \Res_{\Bxi}^{\b}$ from
$\SC_{\b_1\sqcup \dots \sqcup \b_s}$ to $\SC_{\g_1\sqcup\cdots \sqcup \g_t}$.
If $\s(\Bxi) \ne \Bxi$, consider $B(\Bxi) \oplus B(\s\Bxi) \oplus \cdots \oplus B(\s^{t-1}\Bxi)$,
where $t$ is the minimal positive integer such that $\s^t\Bxi = \Bxi$. 
As in \cite{M}, this produces a subquotient functor which is traceless. 
\end{proof}

\para{5.16}
Let $P(j)^{(n)}$ be as in 5.10.
Then we have $P(j)^{(n)} \in \SP_{n\a_j}$. 
We fix an integer $m \ge 1$. 
For $\Bj = (j_1, \dots, j_m) \in J^m, \Bc =  (c_1, \dots, c_m) \in \NN^m$,
define
\begin{equation*}
\tag{5.16.1}  
P_{\Bj,\Bc} = P(j_1)^{(c_1)}P(j_2)^{(c_2)}\cdots P(j_m)^{(c_m)}. 
\end{equation*}
Choose another $\Bj' = (j'_1, \dots, j'_m) \in J^m, \Bc' = (c'_1, \dots, c'_m) \in \NN^m$,
and define $P_{\Bj',\Bc'}$ similarly.
Note that $P_{\Bj,\Bc} \in \SP_{\b}$ with $\b = \sum_{k=1}^mc_k\a_{j_k}$. 
Set $\b = \weit(P_{\Bj,\Bc})$. 
We shall compute $([P_{\Bj,\Bc}], [P_{\Bj'\Bc'}])$ in $K(\SP)$. 
By (1.3.1) and Theorem 5.12, $([P_{\Bj, \Bc}], [P_{\Bj',\Bc'}]) = 0$
if $\weit(P_{\Bj,\Bc}) \ne \weit(P_{\Bj',\Bc'})$.
Hence we may assume that $\weit P_{\Bj,\Bc}) = \weit(P_{\Bj',\Bc'})$,
namely, $\sum_kc_k\a_{j_k} = \sum_kc'_k\a_{j'_k}$. 
\par
\par
For a given $(\Bj, \Bc)$ as above, we define
$\ul\nu = (\ul\nu_1, \dots, \ul\nu_n) \in J^n$
by
\begin{equation*}
\tag{5.16.2}  
(\ul\nu_1, \ul\nu_2, \dots, \ul\nu_n) =
  (\underbrace {j_1, \dots, j_1}_{c_1\text{-times}},
  \underbrace{j_2, \dots, j_2}_{c_2\text{-times}}, \dots,
  \underbrace{j_m, \dots, j_m}_{c_m\text{-times}}),
\end{equation*}
where $n = \sum_{1 \le k \le m}c_k$, and set 
\begin{equation*}
\tag{5.16.3}
P_{\ul\nu} = P(\ul\nu_1)P(\ul\nu_2) \cdots P(\ul\nu_n). 
\end{equation*}  
Then $P_{\ul\nu} \in \SP_{\b}$ with $\b = \sum_{1 \le i \le n}\a_{\ul\nu_i} \in Q_+^{\s}$.
For another pair $(\Bj',\Bc')$ such that
$\sum_kc_k\a_{j_k} = \sum_kc'_k\a_{j'_k}$,
we define
$\ul\nu' = (\ul\nu'_1, \dots, \ul\nu'_n) \in J^n$ in a similar way as in (5.16.2),
where $\sum_{1 \le k \le m}c'_k = n$, and define $P_{\ul\nu'}$ in a similar way as in (5.16.3). 
Note that $f_j^{(n)} = ([n]^!_j)\iv f_j^n$ in $\ul\BU_q^-$. 
Hence by Theorem 5.12,
we have
\begin{equation*}
[P(j)^{(n)}] = ([n]^!_j)\iv [P(j)^{\circ n}]
\end{equation*}
in $\QQ(q)\otimes_{\BA}{}_{\BA}K(\SP)$. 
  Thus we have a relation  in $\QQ(q)\otimes_{\BA}{}_{\BA}K(\SP)$,
\begin{equation*}
\tag{5.16.4}  
[P_{\Bj, \Bc}] = \biggl(\prod_{k = 1}^m [c_k]_{j_k}^!\biggr)\iv [P_{\ul\nu}].
\end{equation*}
A similar formula holds also for $[P_{\Bj',\Bc'}]$.
Thus for $\Bc, \Bc' \in \NN^m$, we have
\begin{equation*}
\tag{5.16.5}  
([P_{\Bj, \Bc}, [P_{\Bj', \Bc'}])
    = \biggl(\prod_{k=1}^m [c_k]_{j_k}^![c'_k]_{j'_k}^!\biggr)\iv
       ([P_{\ul\nu}], [P_{\ul\nu'}]).
\end{equation*}  

\para{5.17.}
We now compute $(P_{\ul\nu}, P_{\ul\nu'})$. By the adjointness  property of
induction functors and restriction functors (Proposition 5.7), we have
\begin{align*}
\tag{5.17.1}  
  (P_{\ul\nu}, P_{\ul\nu'}) &= \bigl(\Ind_{\a_{\ul\nu_1}, \dots, \a_{\ul\nu_n}}
      P({\ul\nu_1})\otimes \cdots \otimes P(\ul\nu_n),
      \Ind_{\a_{\ul\nu'_1}, \dots, \a_{\ul\nu'_n}} P(\ul\nu'_1)\otimes
                     \cdots\otimes P(\ul\nu'_n)\bigr) \\
  &= \bigl(\Res_{\a_{\ul\nu'_1}, \dots, \a_{\ul\nu'_n}}
               \Ind_{\a_{\ul\nu_1}, \dots, \a_{\ul\nu_n}}
                 P(\ul\nu_1)\otimes \cdots \otimes P(\ul\nu_n),
                 P(\ul\nu'_1)\otimes\cdots\otimes P(\ul\nu'_n)\bigr).
\end{align*}  

We compute $\Res_{\a_{\ul\nu'_1}, \dots, \a_{\ul\nu'_n}}
  \Ind_{\a_{\ul\nu_1}, \dots, \a_{\ul\nu_n}}P(\ul\nu_1)\otimes \cdots \otimes P(\ul\nu_n)$
by using the Mackey filtration.
Take 
$\ul\nu = (\ul\nu_1, \dots, \ul\nu_n),\nu' = (\ul\nu'_1, \dots, \ul\nu'_n) \in J^n$.  
Let $\Xi = \Xi(\ul\nu, \ul\nu')$ be the set of matrices
$\Bxi = (\xi_{ij})_{1\le i,j \le n}$,
where $\xi_{ij} \in Q_+^{\s}$ satisfies the condition that
\begin{equation*}
\tag{5.17.2}  
\sum_{j = 1}^n \xi_{ij} = \a_{\ul\nu_i},  \quad \sum_{i=1}^n\xi_{ij} = \a_{\ul\nu'_j}. 
\end{equation*}  

Take $\Bxi \in \Xi$. Since $\a_{\ul\nu_i}, \a_{\ul\nu'_j}$ are simple roots in $\ul\vD_+$,
$\Bxi$ is a permutation matrix, namely,
for each $1 \le i \le n$, there exists a unique $j$ such that $\xi_{ij} \ne 0$,
in which case $\xi_{ij}$ coincides with $\a_{\ul\nu_i} = \a_{\ul\nu'_j}$. 
Thus $i \mapsto j = w(i)$ determines a permutation $w = w(\Bxi) \in S_n$.
\par
In applying the Mackey filtration in Theorem 5.15, we need to determine
the grading shift of the functor $\tau$ explicitly.
In general, the following result is known.
\par\medskip\noindent
(5.17.3) \ Assume that $\nu = (\nu_1, \dots, \nu_n) \in I^{\b}$
with $|\b| = n$  for $\b \in Q_+$.  Take $w \in S_n$.  Then
\begin{equation*}
  \deg e(\nu)\t_w = -\sum_{\substack{k < \ell  \\ w(k) > w(\ell)}}
                        (\a_{\nu_{w(k)}}, \a_{\nu_{w(\ell)}}).
\end{equation*}  

In order to apply (5.17.3) for our case, 
we modify $w = w(\Bxi) \in S_n$ as follows.
Let $\a = \sum_{i=1}^n\a_{\ul\nu_i} \in \ul Q_+$, and set $n' = |\a|$
regarding $\a$ in  $Q_+$. 
We consider $\nu = (\nu_1, \dots, \nu_{n'}) \in I^{\a}$, and 
$e(\nu) \in R(\a)$.
For each $\ul\nu = (\ul\nu_1, \dots, \ul\nu_n) \in J^n$ as above,
we define $e(\ul\nu) \in R(\a)$ as

\begin{equation*}
\tag{5.17.4}  
e(\ul\nu) = \sum_{\mu_1, \dots, \mu_n}e(\mu_1, \dots, \mu_n),
\end{equation*}
where $e(\mu_1, \dots, \mu_n) \in R(\a)$ is a juxtaposition of $\mu_i \in I^{\a_{\ul\nu_i}}$,
and the sum runs over all such $\mu_1, \dots, \mu_n$. 
We choose $e(\nu) = e(\nu_1, \dots, \nu_{n'}) \in I^{\a}$
appearing as $e(\mu_1, \dots, \mu_n)$ in $e(\ul\nu)$ above,  
and choose $e(\nu') = e(\nu'_1, \dots,\nu'_{n'}) \in I^{\a}$
appearing in $e(\ul\nu')$, similarly. 
\par
Associated to the matrix $\Bxi = (\xi_{ij}) \in \Xi(\ul\nu, \ul\nu')$,
we construct a block matrix $\vD(\Bxi) = (\vD_{ij})$
of size $n'$ as follows; $\vD_{ij}$ is a matrix of size $|\ul\nu_i| \times |\ul\nu_j'|$ 
such that
\begin{equation*}
\vD_{ij} = \begin{cases}
              \Id_{|\ul\nu_i|}   &\quad\text{ if } \xi_{ij} = \ul\nu_i = \ul\nu_j', \\
              0                  &\quad\text{ if } \xi_{ij} = 0. 
           \end{cases}      
\end{equation*}
Then $\vD(\Bxi)$ is a permutation matrix, and one can define $w' = w'(\Bxi) \in S_{n'}$
in a similar way as $w = w(\Bxi)$ is defined from $\Bxi$.
Note that $w'$ gives a distinguished representative of the double coset
$S_{\nu_1', \dots, \nu'_{n'}}\backslash S_{n'}/S_{\nu_1, \dots, \nu_{n'}}$.  
\par
We fix $e(\nu) = e(\nu_1, \dots, \nu_{n'}) \in R(\a)$ appearing in (5.17.4).
Then by (5.17.3), for $w' = w'(\Bxi) \in S_{n'}$, we have
\begin{equation*}
\tag{5.17.5}  
  \deg \t_{w'}e(\nu) = \deg e({w'}\iv(\nu))\t_{w'}
  = -\sum_{\substack{1 \le i < j \le n'  \\ w'(i) > w'(j)}}
        (\a_{\nu_i}, \a_{\nu_j}). 
\end{equation*}  

We define, for $\Bxi \in \Xi(\ul\nu, \ul\nu')$ and $w = w(\Bxi) \in S_n$, 
\begin{equation*}
\tag{5.17.6}  
  A(\Bxi) = \sum_{\substack{1 \le k < \ell \le n \\ w(k) > w(\ell)}}
                 (\a_{\ul\nu_k}, \a_{\ul\nu_{\ell}}).
\end{equation*}  

\begin{lem}   
For any $e(\nu)$ appearing in $e(\ul\nu)$, we have
\begin{equation*}
\tag{5.18.1}
\deg\tau_{w'}e(\nu) = -A(\Bxi).
\end{equation*}  
In particular, $\deg\t_{w'}e(\nu)$ does not depend on the choice of $e(\nu)$,
and $\t_{w'}e(\ul\nu)$ is a homogeneous element, whose degree is given by $-A(\Bxi)$. 
\end{lem}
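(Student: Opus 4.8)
The plan is to read the permutation $w'=w'(\Bxi)\in S_{n'}$ directly off the block matrix $\vD(\Bxi)$, split its inversion set according to the block decomposition of $[1,n']$, and then collapse the resulting double sum by bilinearity of $(\ ,\ )$ together with the identity $\a_{\ul\nu_k}=\sum_{i\in\ul\nu_k}\a_i$ (see 3.1).

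First I set up the block picture. Since $e(\nu)$ occurs in $e(\ul\nu)$, we may write $\nu=(\mu_1,\dots,\mu_n)$ as a juxtaposition with $\mu_k\in I^{\a_{\ul\nu_k}}$; the positions occupied by $\mu_k$ inside $\nu$ form a segment $A_k\subset[1,n']$, the $A_1,\dots,A_n$ are consecutive in this order, $|A_k|=|\ul\nu_k|$, and by the definition of $I^{\a_{\ul\nu_k}}$ one has
\begin{equation*}
\sum_{i\in A_k}\a_{\nu_i}=\a_{\ul\nu_k}.
\end{equation*}
In the same way $e(\nu')$ determines a consecutive segment decomposition $[1,n']=A'_1\sqcup\cdots\sqcup A'_n$ with $|A'_j|=|\ul\nu'_j|$, and by (5.17.2) a nonzero block $\xi_{ij}$ forces $\ul\nu_i=\ul\nu'_j$, so $|A_i|=|A'_j|$ there. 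Now the $(i,j)$ block of $\vD(\Bxi)$ is $\Id_{|\ul\nu_i|}$ precisely when $\xi_{ij}\ne0$, i.e.\ when $j=w(i)$, and is $0$ otherwise; since each such identity block is square, the permutation $w'$ encoded by $\vD(\Bxi)$ carries the segment $A_k$ onto $A'_{w(k)}$ by the order-preserving bijection, for every $k$.

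From this the inversions of $w'$ are easy to list. A pair $i<j$ with both indices in the same $A_k$ is never an inversion, as $w'$ is increasing on $A_k$; and for $i\in A_k$, $j\in A_\ell$ with $k<\ell$ (so automatically $i<j$, the $A_k$ being consecutive), $w'(i)\in A'_{w(k)}$ and $w'(j)\in A'_{w(\ell)}$ with $A'_1,\dots,A'_n$ consecutive, hence $w'(i)>w'(j)$ if and only if $w(k)>w(\ell)$, in which case every pair of $A_k\times A_\ell$ is an inversion. Therefore, invoking (5.17.5) and then using bilinearity together with the displayed identity,
\begin{align*}
\deg\t_{w'}e(\nu)
&=-\sum_{\substack{1\le i<j\le n'\\ w'(i)>w'(j)}}(\a_{\nu_i},\a_{\nu_j})
 =-\sum_{\substack{1\le k<\ell\le n\\ w(k)>w(\ell)}}\ \sum_{\substack{i\in A_k\\ j\in A_\ell}}(\a_{\nu_i},\a_{\nu_j})\\
&=-\sum_{\substack{1\le k<\ell\le n\\ w(k)>w(\ell)}}\Bigl(\sum_{i\in A_k}\a_{\nu_i},\ \sum_{j\in A_\ell}\a_{\nu_j}\Bigr)
 =-\sum_{\substack{1\le k<\ell\le n\\ w(k)>w(\ell)}}(\a_{\ul\nu_k},\a_{\ul\nu_{\ell}})=-A(\Bxi),
\end{align*}
which is (5.18.1). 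As $-A(\Bxi)$ does not involve $\nu$, the degree is the same for every $e(\nu)$ occurring in $e(\ul\nu)$; since $\t_{w'}e(\ul\nu)=\sum_{\nu}\t_{w'}e(\nu)$ (sum over those $e(\nu)$) is then a sum of homogeneous elements all of degree $-A(\Bxi)$, the element $\t_{w'}e(\ul\nu)$ is homogeneous of degree $-A(\Bxi)$.

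I expect the one delicate point to be the second step: being certain that $w'$ is exactly the block-wise identity inflation of $w$, i.e.\ that $\vD(\Bxi)$ encodes that particular permutation, with the ordering conventions for rows, columns and segments matching those of (5.15) and 5.17. Once that is pinned down, the inversion decomposition is forced and the remainder is bilinearity of the form and the definition $\a_{\ul\nu_k}=\sum_{i\in\ul\nu_k}\a_i$.
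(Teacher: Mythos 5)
Your proof is correct and follows essentially the same route as the paper's: both decompose $[1,n']$ into the consecutive $\ul\nu_k$-segments, identify the inversion set of $w'$ as the union of full products $A_k\times A_\ell$ over inversions $(k,\ell)$ of $w$, and then collapse the double sum using bilinearity of $(\ ,\ )$ and the identity $\a_{\ul\nu_k}=\sum_{i\in A_k}\a_{\nu_i}$. The only difference is cosmetic — you spell out explicitly that $w'$ is increasing on each $A_k$ and that no intra-block pair can be an inversion, a point the paper leaves implicit in the phrase ``the converse also holds.''
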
  
\begin{proof}
Let $\ul\nu = (\ul\nu_1,\dots, \ul\nu_n)$.  Then $\nu = (\nu_1, \dots, \nu_{n'})$
is written as
\begin{equation*}
\nu = (\underbrace{\nu_1, \dots, \nu_{|\ul\nu_1|}}_{\ul\nu_1\text{-part}},
 \underbrace{\nu_{|\ul\nu_1|+1}, \dots,
         \nu_{|\ul\nu_1| + |\ul\nu_2|}}_{\ul\nu_2\text{-part}}, \dots).
\end{equation*}
We write the part corresponding to $\ul\nu_k$
as $(\nu_s, \dots, \nu_{s'})$,
and the part corresponding to $\ul\nu_{\ell}$
as $(\nu_t, \dots, \nu_{t'})$.
Assume that $k < \ell$ and $w(k) > w(\ell)$. Then by definition of $w' \in S_{n'}$,
for any $i \in [s,s'], j \in [t,t']$, we have $i < j$ and $w'(i) > w'(j)$. 
The converse also holds. 
Since $\a_{\ul\nu_k} = \a_{\nu_s} + \cdots + \a_{\nu_{s'}},
               \a_{\ul\nu_{\ell}} = \a_{\nu_t} + \cdots + \a_{\nu_{t'}}$,
we have
\begin{equation*}
  (\a_{\ul\nu_k}, \a_{\ul\nu_{\ell}}) = \sum_{i \in [s,s'], j \in [t,t']}
         (\a_{\nu_i}, \a_{\nu_j}).
\end{equation*}  
It follows that
\begin{align*}
-A(\Bxi) =   
  -\sum_{\substack{ 1 \le k < \ell \le n  \\ w(k) > w(\ell)}}(\a_{\ul\nu_k}, \a_{\ul\nu_{\ell}})
    &= - \sum_{\substack{1 \le i < j \le n' \\ w'(i) > w'(j) }} (\a_{\nu_i}, \a_{\nu_j})
      = \deg\tau_{w'}e(\nu)
\end{align*}
by (5.17.5).  The lemma is proved.
\end{proof}  

\para{5.19.}
The weight of $P_{\Bj,\Bc}$ can be written as
$\weit(P_{\Bj,\Bc}) = \ul\a = \sum_{k=1}^mc_k\ul\a_{j_k} \in \ul Q_+$. 
We define $\d_{\ul\a} \in \BA$ by
\begin{equation*}
\tag{5.19.1}  
\d_{\ul\a} = \prod_{k=1}^m(1-q_{j_k}^2)^{c_k}.
\end{equation*}

We are now ready to prove

\begin{thm}  
Assume that
$\weit (P_{\Bj,\Bc}) = \weit (P_{\Bj',\Bc'}) = \ul\a$,
then    
\begin{equation*}
  ([P_{\Bj,\Bc}], [P_{\Bj', \Bc'}])
  = \biggl(\prod_{k=1}^m [c_k]_{j_k}^! [c'_k]_{j'_k}^!\biggr)\iv\d_{\ul\a}\iv
       \sum_{\Bxi \in \Xi(\ul\nu, \ul\nu')}q^{-A(\Bxi)}. 
\end{equation*}  
If $\weit (P_{\Bj,\Bc}) \ne \weit(P_{\Bj', \Bc'})$,
then $([P_{\Bj,\Bc}], [P_{\Bj',\Bc'}]) = 0$.
\end{thm}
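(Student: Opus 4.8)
The plan is to build on the reduction already carried out in 5.16. Formula (5.16.5) expresses $([P_{\Bj,\Bc}],[P_{\Bj',\Bc'}])$ as $\bigl(\prod_{k=1}^m [c_k]_{j_k}^![c_k']_{j_k'}^!\bigr)\iv$ times $([P_{\ul\nu}],[P_{\ul\nu'}])$, so it suffices to prove
\[
([P_{\ul\nu}],[P_{\ul\nu'}]) = \d_{\ul\a}\iv\sum_{\Bxi\in\Xi(\ul\nu,\ul\nu')}q^{-A(\Bxi)}
\]
under the assumption $\weit(P_{\Bj,\Bc})=\weit(P_{\Bj',\Bc'})=\ul\a$; the vanishing when the weights differ is immediate from (1.3.1) together with Theorem 5.12.

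First I would use the adjunction (5.17.1) to rewrite $([P_{\ul\nu}],[P_{\ul\nu'}])$ as the pairing of $\Res_{\a_{\ul\nu'_1},\dots,\a_{\ul\nu'_n}}\Ind_{\a_{\ul\nu_1},\dots,\a_{\ul\nu_n}}(P(\ul\nu_1)\otimes\cdots\otimes P(\ul\nu_n))$ against $P(\ul\nu'_1)\otimes\cdots\otimes P(\ul\nu'_n)$ in $\SC_{\a_{\ul\nu'_1}\sqcup\cdots\sqcup\a_{\ul\nu'_n}}$, and then apply the Mackey filtration (Theorem 5.15) to the composite $\Res\circ\Ind$. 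Passing to $K(\SP)$, the class of the image equals the sum of the classes of the images of the subquotient functors; the traceless ones contribute $0$ by the relation $[X]=0$ for traceless $X$, and the rest are indexed precisely by the matrices $\Bxi\in\Xi(\ul\nu,\ul\nu')$ of (5.17.2). Since $\a_{\ul\nu_i}$ and $\a_{\ul\nu'_j}$ are simple roots of $\ul\vD^+$, each $\Bxi$ is a permutation matrix with associated $w=w(\Bxi)\in S_n$ satisfying $\ul\nu_i=\ul\nu'_{w(i)}$; hence $\Res_{\Bxi}^{\b}$ restricts each $P(\ul\nu_i)$ along the trivial one-block decomposition and returns it unchanged, $\tau$ merely permutes the $n$ tensor factors by $w$, and $\Ind_{\Bxi}^{\g}$ is again the identity. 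So the $\Bxi$-subquotient sends $P(\ul\nu_1)\otimes\cdots\otimes P(\ul\nu_n)$ to a grading shift of $P(\ul\nu'_1)\otimes\cdots\otimes P(\ul\nu'_n)$, and by Lemma 5.18, which shows $\t_{w'}e(\ul\nu)$ homogeneous of degree $-A(\Bxi)$ independently of the representative $e(\nu)$, this shift is $q^{-A(\Bxi)}$.

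Next I would evaluate the pairing. By the factorization of the bilinear form under the tensor isomorphisms of 5.8, the pairing of $P(\ul\nu'_1)\otimes\cdots\otimes P(\ul\nu'_n)$ with itself is $\prod_{i=1}^n ([P(\ul\nu'_i)],[P(\ul\nu'_i)])$, and by the isometry statement of Theorem 5.12 together with the definition of the inner product in 1.3 each factor $([P(j)],[P(j)])$ equals $(f_j,f_j)=(1-q_j^2)\iv$. Writing $\ul\a=\sum_{j\in J}n_j\ul\a_j$, the product over $i$ becomes $\prod_{j}(1-q_j^2)^{-n_j}=\d_{\ul\a}\iv$, which depends only on $\ul\a$. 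Summing over all $\Bxi$ gives the displayed formula for $([P_{\ul\nu}],[P_{\ul\nu'}])$, and substituting into (5.16.5) yields the theorem.

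I expect the main obstacle to be the bookkeeping in the middle step: confirming that, because every $\a_{\ul\nu_i}$ is simple, the Mackey subquotient functor attached to $\Bxi$ reduces, up to the grading shift of Lemma 5.18, to the plain permutation of tensor factors by $w(\Bxi)$ — in particular that $\Res_{\Bxi}^{\b}$ applied to a single projective $P(j)$ with $\a_j$ simple returns $P(j)$ unchanged and that $\tau$ contributes nothing beyond the degree shift — and matching this shift to $-A(\Bxi)$ via Lemma 5.18 rather than to the $w'(\Bxi)$-version of the degree formula (5.17.5). One also needs to keep the traceless subquotients and the grading-shift convention (5.10.1) consistent with the $[c_k]_{j_k}^!$ normalization introduced in 5.16.
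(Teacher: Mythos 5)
Your proposal is correct and follows essentially the same route as the paper's proof: reduction to $([P_{\ul\nu}],[P_{\ul\nu'}])$ via (5.16.5), adjunction (5.17.1), the Mackey filtration of Theorem 5.15 with traceless subquotients dropped, Lemma 5.18 to identify the grading shift with $-A(\Bxi)$, and the factor $(P(j),P(j))=(1-q_j^2)\iv$ yielding $\d_{\ul\a}\iv$. The only cosmetic difference is that you derive $(P(j),P(j))=(1-q_j^2)\iv$ from the isometry in Theorem 5.12 and the inner-product formula of 1.3, whereas the paper cites this fact directly from McNamara.
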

\begin{proof}
It is enough to consider the case where $\weit(P_{\Bj,\Bc}) = \weit(P_{\Bj',\Bc'})$. 
We follow the discussion in 5.17. 
By the Mackey filtration in Theorem 5.15, we have
\begin{align*}
  \Res_{\a_{\ul\nu_1'}, \dots, \a_{\ul\nu'_n}}\Ind_{\a_{\ul\nu_1}, \dots, \a_{\ul\nu_n}}
        (P(\ul\nu_1)\otimes \cdots \otimes P(\ul\nu_n))
  &= \sum_{\Bxi \in \Xi(\ul\nu,\ul\nu')}\tau_{\Bxi}
                    (P(\ul\nu_1)\otimes \cdots \otimes P(\ul\nu_n)) 
\end{align*}
modulo the sum of traceless elements, and modulo the degree shift for each $\tau_{\Bxi}$. 
Here $\tau = \tau_{\Bxi}$ is as in Theorem 5.15.
Note that, in our case,  $e(\b_1, \dots, \b_s)$ given in the proof of Theorem 5.15
coincides with $e(\ul\nu)$, and $\tau_{\Bxi}$ coincides with $\tau_{w'}$ .
By Lemma 5.18, $\tau_{w'}e(\ul\nu)$ is homogeneous with
$\deg\tau_{w'}e(\ul\nu) = -A(\Bxi)$. Hence 
\begin{align*}
\tau_{\Bxi}(P(\ul\nu_1)\otimes\cdots\otimes P(\ul\nu_n))
   &= \tau_{w'}e(\ul\nu)(P(\ul\nu_1)\otimes\cdots\otimes P(\ul\nu_n))  \\
   &= q^{-A(\Bxi)}P(\ul\nu_1')\otimes\cdots\otimes P(\ul\nu_n').  
\end{align*}

It follows, by (5.17.1),  that
\begin{align*}
\tag{5.20.1}  
(P_{\ul\nu}, P_{\ul\nu'}) &=  \sum_{\Bxi \in \Xi(\ul\nu,\ul\nu')}q^{-A(\Bxi)}
  \Bigl(P(\ul\nu'_1)\otimes \cdots \otimes P(\ul\nu'_n),
          P(\ul\nu'_1)\otimes \cdots \otimes P(\ul\nu'_n)\Bigr) \\
          &= \sum_{\Bxi \in \Xi(\ul\nu,\ul\nu')}q^{-A(\Bxi)}\prod_{i = 1}^n
             \bigl(P(\ul\nu'_i), P(\ul\nu'_i)\bigr).
\end{align*}  

It is known that $(P(j), P(j)) = (1- q_j^2)\iv$ for $j \in J$.  This was
proved in \cite{M} in the course of the proof of Theorem 5.12.  Thus we have 
\begin{align*}
\tag{5.20.2}  
\prod_{i=1}^n \bigl(P(\ul\nu'_i), P(\ul\nu'_i)\bigr)
            &= \prod_{i=1}^n \bigl(P(\ul\nu_i), P(\ul\nu_i)\bigr)  \\ 
                         &= \prod_{i = 1}^n(1 - q_{\ul\nu_i}^2)\iv
                               = \prod_{k = 1}^m (1 - q_{j_k}^2)^{-c_k} = \d_{\ul\a}\iv.
\end{align*}
The theorem now follows from (5.16.5), (5.20.1) and (5.20.2). 
\end{proof}

\par\bigskip
\section { Comparison of algorithms }

\para{6.1.}
We follow the setup in Section 4, and consider the monomial
basis $\SM_{\ul\Bh} = \{ m(\ul\Bc, \ul\Bh) \mid \ul\Bc \in \NN^{\ul N}\}$
of $\ul\BU_q^-$.  
We recall the algorithm of computing canonical bases discussed in Section 1.
Let $\ul H = (h_{\ul\Bc, \ul\Bc'})$ be the transition matrix from $\SX_{\ul\Bh}$ to 
$\SM_{\ul\Bh}$, $\ul Q = (q_{\ul\Bc, \ul\Bc'})$ the transition
matrix from $\bB_{\ul\Bh}$ to $\SM_{\ul\Bh}$, $\ul P = (p_{\ul\Bc,\ul\Bc'})$
the transition matrix from $\SX_{\ul\Bh}$ to $\bB_{\ul\Bh}$, as in 1.9.
Also set $\ul\vL = (m(\ul\Bc, \ul\Bh), m(\ul\Bc', \ul\Bh))$. Then we have a relation
\begin{equation*}
\tag{6.1.1}  
\ul\vL = {}^t\ul H \ul D\, \ul H, \qquad \ul H = \ul P \ul Q , 
\end{equation*}
and by Proposition 1.10, the matrix $\ul\vL$ determines the matrices $\ul P$ and $\ul Q$
uniquely.
\par
Let $F((\ul\Bc)) = m(\ul\Bc, \ul\Bh)$ be the monomial defined in (4.13.1),
and write it as $f_{j_1}^{(\ul d_1)}\cdots f_{j_m}^{(\ul d_m)}$ for $m = |J|\ul N$,
with $\Bj = (j_1, \dots, j_m) \in J^m$, $\ul\Bd = (\ul d_1, \dots, \ul d_m) \in \NN^m$.
Let $P_{\Bj, \ul\Bd}$ be an element in $\SP_{\ul\a}$ with $\ul\a \in \ul Q_+$
defined as in (5.16.1).
Then the automorphism $\g$ in (5.13.1) maps $m(\ul\Bc, \ul\Bh)$ to $[P_{\Bj,\ul\Bd}]$,
and maps $m(\ul\Bc', \ul\Bh)$ to $[P_{\Bj', \ul\Bd'}]$. 
Hence by Theorem 5.12,
$(m(\ul\Bc, \ul\Bh), m(\ul\Bc', \ul\Bh))$ coincides with 
$(P_{\Bj,\ul\Bd}, P_{\Bj', \ul\Bd'})$, and this value is explicitly computable 
by the formula in Theorem 5.20.  Thus we have proved the following result,
which is a generalization of Antor \cite{A} for the non-symmetric case.
\begin{thm}   
Let $\ul\BU_q^-$ be a quantum group of general type.
Then (6.1.1) gives an algorithm of computing the transition matrix
$\ul P$ from the PBW basis to the canonical basis of $\ul\BU_q^-$. 
\end{thm}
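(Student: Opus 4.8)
The plan is to put together three ingredients that are already in place: the linear-algebra algorithm of Proposition 1.10, the explicit monomial basis $\SM_{\ul\Bh}$ of $\ul\BU_q^-$ constructed in Section 4, and the closed formula for inner products of monomials furnished by Theorem 5.20. First I would invoke Theorem 4.16 and 4.17 to record that $\SM_{\ul\Bh} = \{ m(\ul\Bc,\ul\Bh) \mid \ul\Bc \in \NN^{\ul N}\}$ is a monomial basis, each element $m(\ul\Bc,\ul\Bh) = F((\ul\Bc))$ being an explicit product $F(\ul\Bd^1)\cdots F(\ul\Bd^{\ul N})$ of divided powers of the Chevalley generators, and satisfying the triangularity (1.7.1)(ii) against $\SX_{\ul\Bh}$. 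Then, exactly as in 1.9, the matrices $\ul H$, $\ul Q$, $\ul P$ are lower unitriangular and the relations (6.1.1) hold. Working weight space by weight space --- each $(\ul\BU_q^-)_{-\ul\a}$ being finite dimensional --- Proposition 1.10 applies block-by-block and yields that the single matrix $\ul\vL$ determines $\ul H$, $\ul D$, $\ul Q$, $\ul P$ uniquely, together with an effective procedure (linear algebra over $\QQ(q)$, followed by the $q$-adic splitting used in the proof of Proposition 1.10) recovering all of them, and in particular $\ul P$, from $\ul\vL$.

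The one remaining point is to make the entries of $\ul\vL$ effectively computable, and this is where Section 5 enters. Fix $\ul\Bc,\ul\Bc'$ and read off, from the factorisation $m(\ul\Bc,\ul\Bh) = F(\ul\Bd^1)\cdots F(\ul\Bd^{\ul N})$, a presentation $m(\ul\Bc,\ul\Bh) = f_{j_1}^{(\ul d_1)}\cdots f_{j_m}^{(\ul d_m)}$ as a product of divided powers of the generators (with $m = |J|\ul N$, as in 6.1), and likewise $m(\ul\Bc',\ul\Bh) = f_{j'_1}^{(\ul d'_1)}\cdots f_{j'_m}^{(\ul d'_m)}$; put $\ul\a = \weit(m(\ul\Bc,\ul\Bh))$. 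Since the isomorphism $\g : {}_{\BA}\ul\BU_q^- \isom {}_{\BA}K(\SP)$ of (5.13.1) is an algebra homomorphism with $\g(f_j^{(n)}) = [P(j)^{(n)}]$, it sends $m(\ul\Bc,\ul\Bh)$ to $[P_{\Bj,\ul\Bd}]$ with $P_{\Bj,\ul\Bd}$ as in (5.16.1), and similarly $m(\ul\Bc',\ul\Bh)$ to $[P_{\Bj',\ul\Bd'}]$. Because $\g$ is an isometry (Theorem 5.12), the $(\ul\Bc,\ul\Bc')$-entry of $\ul\vL$ is $([P_{\Bj,\ul\Bd}],[P_{\Bj',\ul\Bd'}])$, which Theorem 5.20 evaluates in closed form: it is $0$ when the weights disagree, and otherwise equals $\bigl(\prod_k [\ul d_k]^!_{j_k}[\ul d'_k]^!_{j'_k}\bigr)\iv\,\d_{\ul\a}\iv \sum_{\Bxi \in \Xi(\ul\nu,\ul\nu')} q^{-A(\Bxi)}$, a finite sum over the combinatorially described set $\Xi(\ul\nu,\ul\nu')$ with the explicit exponents $A(\Bxi)$ of (5.17.6). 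Thus $\ul\vL$ is explicitly computable, and feeding it into (6.1.1) and Proposition 1.10 outputs $\ul P$; this finishes the argument.

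I do not expect a genuine obstacle, the statement being essentially an assembly of Proposition 1.10, Theorem 4.16 and Theorem 5.20. The one bookkeeping step that must be carried out with care is the identification $\g(m(\ul\Bc,\ul\Bh)) = [P_{\Bj,\ul\Bd}]$, i.e. matching the divided-power presentation of the monomial basis element with the element $P_{\Bj,\Bc}$ of 5.16 to which Theorem 5.20 literally applies; this is immediate once the index tuples $(\Bj,\ul\Bd)$ are extracted from $F((\ul\Bc))$, using that $\g$ is an algebra map sending $f_j^{(n)}$ to $[P(j)^{(n)}]$ and that the product in $K(\SP)$ is the convolution (induction) product. A minor secondary point is the phrase ``general type'' in the statement: the PBW--canonical--monomial triple, the relations (6.1.1) and Proposition 1.10 are all to be read weight-space-wise, and one should note that the isometry $\g$ and Theorem 5.20 are available in Kac--Moody generality by (5.13.1) and Remark 5.13.
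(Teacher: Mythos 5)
Your proposal is correct and follows essentially the same route as the paper: Theorems 4.16--4.17 supply the monomial basis $\SM_{\ul\Bh}$, Proposition 1.10 (applied weight by weight) turns the matrix equations (6.1.1) into an effective procedure for $\ul P$ from $\ul\vL$, and the entries of $\ul\vL$ are computed by transporting monomials through the isometric algebra isomorphism $\g$ of (5.13.1) into $K(\SP)$ and invoking Theorem 5.20. One small caveat: your closing remark suggests the argument extends to Kac--Moody type because $\g$ and Theorem 5.20 do; but the monomial basis $\SM_{\ul\Bh}$ of Section 4 is built from $w_0$, adapted orientations, and the case-by-case Lemma 4.13, which are finite-type constructions --- so ``general type'' here should be read as ``not necessarily symmetric, finite type,'' as in the paper's own discussion preceding the theorem.
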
  

\para{6.3.}
Theorem 6.2 can be applied to the case where $\s$ is trivial,
which gives an algorithm of computing the transition matrix $P$
from the PBW basis to the canonical basis of $\BU_q^-$.
In the discussion below, we compare those two algorithms.
\par
Let $\wt\SM_{\Bh} = \{ \wt m(\Bc, \Bh) \mid \Bc \in \NN^N\}$
be the (modified) monomial basis of $\BU_q^-$ given in 4.16.
Let $H = (h_{\Bc,\Bc'})$ be the transition matrix from $\SX_{\Bh}$ to
$\wt\SM_{\Bh}$, $Q = (q_{\Bc,\Bc'})$ the transition matrix from  $\bB_{\Bh}$
to $\wt\SM_{\Bh}$, $P = (p_{\Bc,\Bc'})$ the transition matrix from  $\SX_{\Bh}$
to $\bB_{\Bh}$. Also set $\vL = \Bigl((\wt m(\Bc,\Bh), \wt m(\Bc',\Bh))\Bigr)$.  We have
a similar matrix equation
\begin{equation*}
\tag{6.3.1}  
\vL = {}^tH D H, \qquad H = PQ,
\end{equation*}
and the matrix $\vL$ determines $Q$ and $P$ uniquely.
The matrix $\vL$ can be computed by a similar formula as in Theorem 5.20
applied for the case $\s$ is trivial. Thus (6.3.1) gives an algorithm
of computing the transition matrix $P$ from  $\SX_{\Bh}$ to $\bB_{\Bh}$. 
\par
We write $\vL$ as $\vL = (\la_{\Bc, \Bc'})_{\Bc, \Bc'\in \NN^N}$, where
$\la_{\Bc, \Bc'} = (\wt m(\Bc, \Bh), \wt m(\Bc', \Bh))$.
Let $\vL^{\s}$ be the submatrix of $\vL$ consisting of $\la_{\Bc, \Bc'}$
such that $\Bc, \Bc' \in \NN^{N,\s}$. 
We want to compare the matrices $\vL^{\s}$ and $\ul\vL$, under the bijection
$\NN^{N,\s}$ and $\NN^{\ul N}$.
\par
Recall that $\wt m(\Bc, \Bh) = F(\wt\Bd^1) \cdots F(\wt\Bd^{\ul N})$
(see (4.17.1) and (4.17.2)), which we express as 
$\wt m(\Bc, \Bh) = f_{i_1}^{(d_1)} \cdots f_{i_{m'}}^{(d_{m'})}$,
where $m' = \sum_{1 \le k \le m}|j_k|$, and $\Bi = (i_1, \dots, i_{m'}) \in I^{m'}$,
$\Bd = (d_1, \dots, d_{m'}) \in \NN^{m'}$.

\para{6.4.}
Assume that $\Bc \in \NN^{N, \s}$ corresponds to $\ul \Bc \in \NN^{\ul N}$. 
Then $\Bi$ and $\Bd$ are determined from $\Bj$ and $\ul\Bd$ as follows. 
For each $j \in J$, we fix
an order of the orbit $j$ as $j = \{ k_1, \dots, k_{|j|} \}$.  Then $\Bi$ is given 
as a juxtaposition of $j_1, \dots, j_m$ such as
\begin{equation*}
  \Bi = (\underbrace{i_1, \dots, i_{|j_1|}}_{j_1},
         \underbrace{i_{|j_1|+1}, \dots, i_{|j_1|+ |j_2|}}_{j_2}, \dots,
         \underbrace{i_{|j_1| + \cdots + |j_{m-1}| +1}, \dots, i_{m'}}_{j_m}),
\end{equation*}
and $\Bd$ is given as 
\begin{equation*}
  \Bd = (d_1, \dots, d_{m'})
  = (\underbrace{\ul d_1, \dots, \ul d_1}_{|j_1|\text{-times}},
  \underbrace{\ul d_2, \dots, \ul d_2}_{|j_2|\text{-times}},
  \dots, \underbrace{\ul d_m, \dots, \ul d_m}_{|j_m|\text{-times}}). 
\end{equation*}  

\par
Let $P_{\Bi, \Bd}$ be an element in $\SP_{\a}$ with $\a \in Q_+$
corresponding to $\wt m(\Bc, \Bh)$,
and define $P_{\Bi', \Bd'}$ corresponding to $\wt m(\Bc',\Bh)$, similarly.
Then we have $([P_{\Bi, \Bd}], [P_{\Bi', \Bd'}]) = (\wt m(\Bc, \Bh), \wt m(\Bc', \Bh))$. 
\par
Assume that $\Bc \in \NN^{N,\s}$ (resp. $\Bc' \in \NN^{N,\s}$), and
$(\Bi, \Bd)$ (resp. $(\Bi',\Bd')$) is obtained from $(\Bj, \ul\Bd)$
(resp. from $(\Bj', \ul\Bd')$). 
The inner product $([P_{\Bj,\ul\Bd}], [P_{\Bj', \ul\Bd'}])$
can be computed by making use of Theorem 5.20. 
Assume that $\weit(P_{\Bj,\ul\Bd}) = \weit(P_{\Bj', \ul\Bd'}) = \ul\a \in \ul Q_+$.  
Set $n = \sum_{k=1}^m\ul d_k$, and let $\ul\nu = (\ul\nu_1, \dots, \ul\nu_n) \in J^n$
be the element attached to $(\Bj, \ul\Bd)$ as in (5.16.2). Similarly,
we define $\ul\nu' = (\ul\nu'_1, \dots, \ul\nu'_n)$ attached to $(\Bj',\ul\Bd')$.
Then by Theorem 5.20, we have

\begin{align*}
\tag{6.4.1}  
([P_{\Bj,\ul\Bd}], [P_{\Bj', \ul\Bd'}])
  = \d_{\ul\a}\iv \prod_{k=1}^m ([\ul d_k]_{j_k}^! [\ul d'_k]_{j'_k}^!)\iv \cdot
         \sum_{\ul\Bxi \in \Xi(\ul\nu, \ul\nu')}q^{-A(\ul\Bxi)}, 
\end{align*}
where by (5.19.1), 
\begin{equation*}
\d_{\ul\a} = \prod_{k=1}^m(1-q_{j_k}^2)^{\ul d_k}.
\end{equation*}  

On the other hand, we consider the inner product $([P_{\Bi,\Bd}], [P_{\Bi',\Bd'}])$.
Assume that $\weit(P_{\Bi,\Bd}) = \weit(P_{\Bi',\Bd'}) = \a \in Q^{\s}_+$.
Set $n' = \sum_{k=1}^{m'}d_k$, and let $\nu = (\nu_1, \dots, \nu_{n'}) \in I^{n'}$
attached to $(\Bi, \Bd)$ as in (5.16.2). $\nu' = (\nu'_1, \dots, \nu'_{n'})$ attached
to $(\Bi',\Bd')$ is defined similarly. 
By applying Theorem 5.20 for $P_{\Bi, \Bd}$, we have
the following.  
\begin{align*}
\tag{6.4.2}  
([P_{\Bi,\Bd}], [P_{\Bi', \Bd'}])
&= \d_{\a}\iv \prod_{k=1}^{m '}([d_k]^! [d'_k]^!)\iv \cdot
             \sum_{\Bxi \in \Xi(\nu, \nu')}q^{-A(\Bxi)}  \\ 
&= \d_{\a}\iv \prod_{k=1}^m ([\ul d_k]^!)^{-|j_k|} ([\ul d'_k]^!)^{-|j'_k|} \cdot
        \sum_{\Bxi \in \Xi(\nu, \nu')}q^{-A(\Bxi)}, 
\end{align*}
where
\begin{equation*}
\d_{\a} = \prod_{k = 1}^{m'}(1 - q^2)^{d_k} = \prod_{k=1}^m(1-q^2)^{\ul d_k|j_k|}.
\end{equation*}  

\par
Assume that $\wt m(\Bc, \Bh)$ corresponds to $P_{\Bi,\Bd}$ for $\Bc \in \NN^{N,\s}$,
and $m(\ul\Bc, \ul\Bh)$ corresponds to $P_{\Bj, \ul\Bd}$ for $\ul\Bc \in \NN^{\ul N}$.
Set
\begin{equation*}
\tag{6.4.3}  
  \g_{\Bc} = \prod_{k=1}^m([\ul d_k]^!)^{|j_k|}, \qquad
  \g_{\ul\Bc} = \prod_{k=1}^m[\ul d_k]_{j_k}^!. 
\end{equation*}  
\para{6.5.}
For each $j \in J$, we fix a total order
$j = \{ k_1, \dots, k_{|j|}\}$. 
For each $\ul\nu = (\ul\nu_1, \dots, \ul\nu_n) \in J^n$, we define
$\nu = (\nu_1, \dots, \nu_{n'}) \in I^{n'}$ by juxtaposition of
$j = \ul\nu_k \lra (k_1, \dots, k_{|j|}) \in I^{|j|}$, where
$n' = \sum_{1 \le k \le n}|\ul\nu_k|$.
For each $\ul\Bxi = (\ul\xi_{st})_{1 \le s,t \le n} \in \Xi(\ul\nu, \ul\nu')$, we define
a block matrix $\Bxi = (\wt\Bxi_{st})_{1 \le s,t \le n}$, where
$\wt\Bxi_{st}$ is a matrix of size $|\ul\nu_s| \times |\ul\nu'_t|$ such that  
\begin{equation*}
  \wt\Bxi_{st} = \begin{cases}
          \Diag(\a_{k_1}, \dots, \a_{k_{|j|}})
                 &\quad\text{ if } \ul\xi_{st}
                = \ul\nu_s = \ul\nu'_t = j,  \\  
            0    &\quad\text{ if } \ul\xi_{st} = 0.
               \end{cases} 
\end{equation*}
Then $\Bxi$ gives an element in $\Xi(\nu, \nu')$, and
one can define an injective map $\vf : \Xi(\ul\nu, \ul\nu') \to \Xi(\nu,\nu')$
by the assignment $\ul\Bxi \mapsto \Bxi$.
We consider $w = w(\ul\Bxi) \in S_n$ and $w' = w(\Bxi) \in S_{n'}$ associated to $w$.
Then this $w'$ coincides with $w'$ appeared in (5.17.5).
By applying Lemma 5.18 for $e(\nu)$ and $e(\ul\nu)$, we see that
$A(\Bxi) = A(\ul\Bxi)$.
Summing up the above discussion, we have the following.

\begin{lem}  
There exists an injective map
$\vf : \Xi(\ul\nu, \ul\nu') \to \Xi(\nu, \nu'), \ul\Bxi \mapsto \Bxi$
such that $A(\ul\Bxi) = A(\Bxi)$.
In particular, we have
\begin{equation*}
\tag{6.6.1}  
  \sum_{\ul\Bxi \in \Xi(\ul\nu, \ul\nu')}q^{-A(\ul\Bxi)}
          = \sum_{\Bxi \in \Im \vf}q^{-A(\Bxi)}.
\end{equation*}  
\end{lem}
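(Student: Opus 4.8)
The plan is to establish the three ingredients bundled into the construction of 6.5 — that $\Bxi=\vf(\ul\Bxi)$ really lies in $\Xi(\nu,\nu')$, that $\vf$ is injective, and that $A(\ul\Bxi)=A(\Bxi)$ — after which (6.6.1) follows at once by reindexing the sum $\sum_{\ul\Bxi}q^{-A(\ul\Bxi)}$ along $\vf$ and using $A(\ul\Bxi)=A(\vf(\ul\Bxi))$.

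First I would fix the bookkeeping: label the rows of $\Bxi$ by pairs $(s,a)$ with $1\le s\le n$ and $1\le a\le|\ul\nu_s|$, so that the row $(s,a)$ is the $\nu$-slot $\nu_{(s,a)}=k_a$, the $a$-th member of the orbit $\ul\nu_s$ in the chosen ordering, and similarly for columns. Since each $\a_{\ul\nu_s}$ is a simple root of $\ul\vD$, every $\ul\Bxi\in\Xi(\ul\nu,\ul\nu')$ is a permutation matrix; writing $w=w(\ul\Bxi)\in S_n$ one has $\ul\xi_{s,w(s)}=\a_{\ul\nu_s}=\a_{\ul\nu'_{w(s)}}$ and all other entries zero. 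By the definition of $\Bxi$ the row $(s,a)$ then has a single nonzero entry, located in column $(w(s),a)$ and equal to $\a_{k_a}=\a_{\nu_{(s,a)}}$; summing over columns gives the row constraint defining $\Xi(\nu,\nu')$, and the column constraint is obtained symmetrically, so $\Bxi\in\Xi(\nu,\nu')$. Injectivity of $\vf$ is then transparent: from the support of $\Bxi$ one reads off $w$, hence $\ul\Bxi$ (the nonzero entries of $\ul\Bxi$ being forced to equal $\a_{\ul\nu_s}$, and $\ul\nu$ being fixed).

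The substantive point is $A(\ul\Bxi)=A(\Bxi)$, and here I would invoke Lemma 5.18 and (5.17.5). The observation that makes it work is that the permutation $w(\Bxi)\in S_{n'}$ attached to $\Bxi$ as an element of $\Xi(\nu,\nu')$ coincides with the permutation $w'=w'(\ul\Bxi)\in S_{n'}$ built from the block matrix $\vD(\ul\Bxi)$ in 5.17: both $\vD(\ul\Bxi)$ and $\Bxi$ have block $(s,t)$ nonzero exactly when $t=w(s)$, and on each such block they are $|\ul\nu_s|\times|\ul\nu_s|$ matrices with the same (diagonal) support, so they induce the same bijection $(s,a)\mapsto(w(s),a)$ of $[1,n']$. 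Lemma 5.18, applied to the pair $(\ul\nu,\ul\nu')$ and the matrix $\ul\Bxi$, then gives $A(\ul\Bxi)=-\deg\t_{w'}e(\nu)$ for any $e(\nu)$ occurring in $e(\ul\nu)$, while (5.17.5) evaluates $-\deg\t_{w'}e(\nu)=\sum_{1\le i<j\le n',\,w'(i)>w'(j)}(\a_{\nu_i},\a_{\nu_j})$. Because $w'=w(\Bxi)$, this last sum is precisely $A(\Bxi)$ as produced by (5.17.6) applied to the symmetric datum $X$ with trivial automorphism (with $n'$, $\nu$, $\nu'$ playing the roles of $n$, $\ul\nu$, $\ul\nu'$). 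Hence $A(\ul\Bxi)=A(\Bxi)$, and summing $q^{-A(\cdot)}$ over $\Xi(\ul\nu,\ul\nu')$ and transporting along $\vf$ yields (6.6.1).

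I do not anticipate a genuine obstacle, since the whole content is already the computation indicated in 6.5; the only thing demanding care is keeping the two levels of indexing straight — the $J$-labels $1,\dots,n$ against the $I$-labels $1,\dots,n'$ — and in particular confirming that the lift of a permutation of $[1,n]$ to $[1,n']$ dictated by the orbit sizes agrees both with $w'(\ul\Bxi)$ and with $w(\Bxi)$. Via Lemma 5.18 the equality $A(\ul\Bxi)=A(\Bxi)$ ultimately rests on the additivity $(\a_{\ul\nu_k},\a_{\ul\nu_\ell})=\sum_{i\in\ul\nu_k\text{-part},\,j\in\ul\nu_\ell\text{-part}}(\a_{\nu_i},\a_{\nu_j})$ together with the compatibility of the inversions of $w$ and $w'$, both of which are routine.
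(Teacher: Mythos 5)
Your proof is correct and takes essentially the same route as the paper's discussion in 6.5: you build $\vf$ by the same block-diagonal construction, verify membership in $\Xi(\nu,\nu')$ and injectivity from the support of $\Bxi$, identify $w(\Bxi)\in S_{n'}$ with the $w'$ of (5.17.5) by comparing block supports of $\Bxi$ and $\vD(\ul\Bxi)$, and then chain Lemma 5.18 with (5.17.5)–(5.17.6) to get $A(\ul\Bxi)=A(\Bxi)$. The only difference is that you make the $(s,a)$-indexing and the row/column checks explicit, which the paper leaves tacit.
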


By comparing (6.4.1) and (6.4.2), and by applying Lemma 6.6,
we have the following result.  
\begin{thm}  
The inner product $([P_{\Bj,\ul\Bd}], [P_{\Bj',\ul\Bd'}])$ can be computed from
the inner product $([P_{\Bi,\Bd}], [P_{\Bi', \Bd'}])$ in (6.4.2), by replacing
$\d_{\a}$ by $\d_{\ul\a}$, $([\ul d_k]^!)^{|j_k|}([\ul d'_k]^!)^{|j'_k|}$ by
$[\ul d_k]_{j_k}^![\ul d'_k]_{j'_k}^!$, and
$\sum_{\Bxi \in \Xi(\nu,\nu')}q^{-A(\Bxi)}$ by $\sum_{\Bxi \in \Im \vf}q^{-A(\Bxi)}$. 
Hence the matrix $\ul\vL$ can be computed from the data related to the matrix $\vL$.
In particular, the matrix $\ul P$ can be computed from the monomial basis for $\BU_q^-$. 
\end{thm}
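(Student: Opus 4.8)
The plan is to prove Theorem 6.7 by a direct term-by-term comparison of the two closed formulas (6.4.1) and (6.4.2), with Lemma 6.6 supplying the identification of the remaining ``Gaussian sum'' factors. First I would dispose of the degenerate case: by Theorem 5.20 both inner products vanish unless the relevant weights agree, and under the identification $Q_+^{\s}\cong\ul Q_+$ the condition $\weit(P_{\Bi,\Bd})=\weit(P_{\Bi',\Bd'})=\a$ is equivalent to $\weit(P_{\Bj,\ul\Bd})=\weit(P_{\Bj',\ul\Bd'})=\ul\a$. So one may assume throughout that the weights match, in which case both (6.4.1) and (6.4.2) are available.

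Next I would match the three kinds of factors in (6.4.1) and (6.4.2). I will not spell out the routine verification that each factor transforms as claimed; the content is: the scalar $\d_{\a}=\prod_k(1-q^2)^{\ul d_k|j_k|}$ goes to $\d_{\ul\a}=\prod_k(1-q_{j_k}^2)^{\ul d_k}$; the product of quantum factorials $\prod_k([\ul d_k]^!)^{|j_k|}([\ul d'_k]^!)^{|j'_k|}$ goes to $\prod_k[\ul d_k]_{j_k}^![\ul d'_k]_{j'_k}^!$; and, the essential point, Lemma 6.6 identifies $\sum_{\ul\Bxi\in\Xi(\ul\nu,\ul\nu')}q^{-A(\ul\Bxi)}$ with $\sum_{\Bxi\in\Im\vf}q^{-A(\Bxi)}$, the sub-sum of $\sum_{\Bxi\in\Xi(\nu,\nu')}q^{-A(\Bxi)}$ over the image of the injection $\vf$ of 6.5. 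Performing these three replacements in (6.4.2) yields exactly (6.4.1), which is the first assertion of the theorem.

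For the statement that $\ul\vL$ is computable from the data related to $\vL$, I would argue as follows. The construction of the entries of $\vL$ via Theorem 5.20 produces, for each pair $(\Bc,\Bc')$ with $\Bc,\Bc'\in\NN^{N,\s}$, the combinatorial input $\nu,\nu'\in I^{n'}$ together with the set $\Xi(\nu,\nu')$ and the degrees $A(\Bxi)$; since $\vf$ and its image depend only on $\Bh$, $\ul\Bh$ and the fixed orderings of the $\s$-orbits, the distinguished subset $\Im\vf\subset\Xi(\nu,\nu')$ is determined by this same data. Applying the three substitutions above entry by entry then recovers $\ul\vL=\bigl((m(\ul\Bc,\ul\Bh),m(\ul\Bc',\ul\Bh))\bigr)$. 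Finally Theorem 6.2 (that is, Proposition 1.10 applied to the matrix equations (6.1.1)) shows that $\ul\vL$ determines $\ul P$ by an explicit algorithm, so $\ul P$ is computed from the monomial-basis data of $\BU_q^-$.

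The main obstacle I anticipate is not algebraic --- the three substitutions are bookkeeping once (6.4.1), (6.4.2) and Lemma 6.6 are in hand --- but rather pinning down precisely what ``the data related to $\vL$'' should mean, and checking that the passage to $\ul\vL$ is genuinely effective. Concretely, I would need to verify that the extra structure ($\nu,\nu'$, the chosen $\s$-orbit orderings, the map $\vf$, and the partition of $\Xi(\nu,\nu')$ into $\Im\vf$ and its complement) is all available from, and uniquely fixed by, the choices of $\Bh$ and $\ul\Bh$, so that computing $\ul\vL$ does not covertly require knowing $\ul\vL$ in advance. This is the point at which I would be most careful.
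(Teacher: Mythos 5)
Your proposal matches the paper's argument exactly: the paper gives no separate proof for Theorem 6.7 beyond the preceding sentence ``By comparing (6.4.1) and (6.4.2), and by applying Lemma 6.6, we have the following result,'' and your term-by-term comparison of the two closed formulas, with Lemma 6.6 supplying the identification $\sum_{\ul\Bxi\in\Xi(\ul\nu,\ul\nu')}q^{-A(\ul\Bxi)}=\sum_{\Bxi\in\Im\vf}q^{-A(\Bxi)}$ and Theorem 6.2/Proposition 1.10 giving the passage $\ul\vL\rightsquigarrow\ul P$, is precisely that argument spelled out. Your closing caution about what ``the data related to $\vL$'' must mean (that one needs the combinatorial input $\nu,\nu',\Xi(\nu,\nu'),\Im\vf$, all fixed by $\Bh$ and $\ul\Bh$, and not merely the numerical entries of $\vL$) is well placed, and is the same point the paper addresses in Remarks 6.8 and 6.9(ii); in particular the small discrepancy in choosing the expression $\nu$ versus $\ul\nu$ is harmless because the differing factors mutually commute.
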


\remark{6.8.}
In order to apply Lemma 6.6 for the proof of Theorem 6.7, we need to
replace $P_{\Bj, \ul\Bd}$ by the expression $\ul\nu$.  If $\nu$ is the expression
as given in 6.5, it not necessarily coincide with the expression obtained from $P_{\Bi,\Bd}$.
However, the difference of those two expressions consists of $i$ such that $f_i$ are mutually
commuting.  Hence the corresponding monomial bases are the same, and one can ignore
the discrepancy.

\remarks{6.9.}
(i) \
Let $P^{\s}$ be the submatrix of $P$ consisting of $p_{\Bc, \Bc'}$ with
$\Bc, \Bc'$ : $\s$-stable.  Then Proposition 3.7 asserts that $P^{\s}$ coincides with $\ul P$
over $\BA' = (\ZZ/p\ZZ)[q,q\iv]$.
In the setup of Theorem 6.7, this is explained as follows.
We have $\d_{\a} = \d_{\ul \a}$ and $([\ul d_k]^!)^{|j_k|} = [\ul d_k]_{j_k}^!$ on
$\BA'$ (note that $|j| = 1$ or $p$ for $j \in J$). Furthermore
it is likely that $\sum_{\Bxi \in \Xi(\nu,\nu')}q^{-A(\Bxi)}
   \equiv \sum_{\Bxi \in \Im \vf}q^{-A(\Bxi)} \pmod p$.
Then we have $\vL^{\s} = \ul\vL$ over $\BA'$, and $P^{\s} = \ul P$ over $\BA'$.
\par
(ii) \ Theorem 6.7 gives a closed formula for $\ul P$ on $\BA$, not on $\BA'$, 
but in order to determine $\ul P$, the matrix $\vL$ or $H$ is needed besides $P$.

\par\bigskip
\section{ Examples }

\para{7.1.}
Assume that $\BU_q^-$ is of type $A_3$, with $I = \{ 1,2,1'\}$ such that
$\s : 1 \lra 1', 2\lra 2$.
We assume that $(\a_i, \a_i) = 2$ for $i \in I$, and $(\a_i, \a_j) = -1$ if $i$ and $j$
are joined.  
We follow the setup in 4.3 (A).
We have $J = \{ \ul 1, \ul 2\}$ and $\ul\BU_q^-$
is of type $B_2$.  Thus $(\a_{\ul 1}, \a_{\ul 1}) = 4, (\a_{\ul 2}, \a_{\ul 2}) = 2$,
and $(\a_{\ul 1}, \a_{\ul 2}) = -2$.  
We take a reduced sequence $\ul\Bh = (\ul 1, \ul 2, \ul 1, \ul 2)$ of
$\ul w_0$ of $W(B_2)$. Let $\Bh = (1,1',2,1,1',2)$ be a reduced sequence of
$w_0 \in W(A_3) = S_4$. Then $\Bh$ is adapted
with respect to the quiver $\ora{Q} = 1 \leftarrow 2 \rightarrow 1'$,
and the total order of $I = \{ 1,1',2\}$ satisfies the condition (*) (see 4.3 (A)).
The total order of $\vD^+$ associated to $\Bh$
is given by $\vD^+ = \{ 1,1', 11'2, 1'2, 12, 2\}$.
The PBW basis $\SX_{\Bh}$
is given by
\begin{equation*}
\SX_{\Bh} = \{ L(\Bc,\Bh) =
  F_1^{(c_1)}F_{1'}^{(c_2)}F_{11'2}^{(c_3)}F_{1'2}^{(c_4)}F_{12}^{(c_5)}F_2^{(c_6)}
       \mid \Bc = (c_1, \dots, c_6) \in \NN^6 \}.
\end{equation*}
$\s$ acts on $\vD^+$ by $\s: 1 \lra 1', 12 \lra 1'2$, and leaves $11'2, 2$ stable.
Correspondingly, we have an action of $\s$ on $\Bc = (c_1, \dots, c_6)$ by
\begin{equation*}
\s : (c_1,c_2,c_3,c_4,c_5,c_6) \mapsto (c_2, c_1, c_3, c_5,c_4, c_6).
\end{equation*}
The total order of $\ul\vD^+$ associated to $\ul\Bh$ is given by
$\ul\vD^+ = \{ \ul 1, \ul 1\ul 2, \ul 1\ul 2\ul 2, \ul 2\}$, and 
the PBW basis $\SX_{\ul\Bh}$ is given by
\begin{equation*}
  \SX_{\ul\Bh} = \{ L(\ul\Bc, \ul\Bh)
  = F_{\ul 1}^{(\ul c_1)}F_{\ul 1\ul 2}^{(\ul c_2)}
  F_{\ul 1\ul 2\ul 2}^{(\ul c_3)}F_{\ul 2}^{(\ul c_4)} \mid
     \ul\Bc = (\ul c_1, \dots, \ul \Bc_4) \in \NN^4\}.
\end{equation*}
Let $\wt m(\Bc, \Bh) = F(\wt\Bd^1)F(\wt\Bd^2)F(\wt\Bd^3)F(\wt\Bd^4)$ be the monomial basis.
We have
\begin{equation*}
\begin{aligned}
 \wt\Bc_1 &= (c_1, c_2,0,0,0,0) &\quad &\longleftrightarrow &\quad \wt\Bd^1 &= (c_1, c_2, 0), \\ 
  \wt\Bc_2 &= (0,0, c_3, 0,0,0)  &\quad &\longleftrightarrow &\quad \wt\Bd^2 &= (c_3, c_3, c_3),  \\
  \wt\Bc_3 &= (0, 0,0,c_4, c_5,0) &\quad &\longleftrightarrow
                  &\quad \wt\Bd^3 &= (c_4, c_5, c_4+c_5),  \\
  \wt\Bc_4 &= (0,0,0,0, 0,c_6)   &\quad  &\longleftrightarrow  &\quad \wt\Bd^4 &= (0,0,c_6).
\end{aligned}
\end{equation*}

Hence $\wt m(\Bc,\Bh)$ is given by
\begin{equation*}
\tag{7.1.1}  
  \wt m(\Bc, \Bh) = f_{1'}^{(c_2)}f_1^{(c_1)}\cdot f_2^{(c_3)}f_{1'}^{(c_3)}f_1^{(c_3)}\cdot
                      f_{2}^{(c_4 + c_5)}f_{1'}^{(c_5)}f_1^{(c_4)}\cdot f_2^{(c_6)}.
\end{equation*}  

In turn, for $\ul\Bc = (\ul c_1, \ul c_2, \ul c_3, \ul c_4)$, we have

\begin{equation*}
\begin{aligned}
\ul\Bc_1 &= (\ul c_1, 0, 0, 0) &\quad &\longleftrightarrow &\quad \ul\Bd^1 &= (\ul c_1, 0), \\     
\ul\Bc_2 &= (0, \ul c_2, 0,0)  &\quad &\longleftrightarrow &\quad \ul\Bd^2 &= (\ul c_2, \ul c_2), \\ 
\ul\Bc_3 &= (0,0, \ul c_3,0)   &\quad &\longleftrightarrow &\quad \ul\Bd^3 &= (\ul c_3, 2\ul c_3), \\
\ul\Bc_4 &= (0,0,0,\ul c_4)    &\quad &\longleftrightarrow &\quad \ul\Bd^4 &= (0, \ul c_4).
\end{aligned}
\end{equation*}  

Hence $m(\ul\Bc, \ul\Bh) = F(\ul \Bd^1)F(\ul\Bd^2)F(\ul\Bd^3)F(\ul\Bd^4)$
is given by

\begin{equation*}
\tag{7.1.2}  
m(\ul\Bc, \ul\Bh) = f_{\ul 1}^{(\ul c_1)}\cdot f_{\ul 2}^{(\ul c_2)}f_{\ul 1}^{(\ul c_2)}\cdot
                       f_{\ul 2}^{(2\ul c_3)}f_{\ul 1}^{(\ul c_3)}\cdot f_{\ul 2}^{(\ul c_4)}.
\end{equation*}

Thus in the case where
$\Bc = (c_1, \dots, c_6) = (\ul c_1, \ul c_1, \ul c_2, \ul c_3, \ul c_3, \ul c_4) \in \NN^{6,\s}$, 
we have

\begin{equation*}
\tag{7.1.3}  
\Phi(m(\ul\Bc, \ul\Bh)) = \pi(\wt m(\Bc, \Bh)). 
\end{equation*}

\para{7.2.}
We keep the setting in 7.1.
For $L(\Bc,\Bh)$, the weight is given by
$\a = \weit(L(\Bc,\Bh)) = \sum_{k=1}^6 c_k\b_k$.
We consider the case where $\a = 2\a_1 + 2\a_{1'} + \a_2$.
Then $\Bc = (c_1, \dots, c_6)$ such that $\weit(L(\Bc,\Bh)) = \a$ are given by
\begin{align*}
  \Bc_1 &= (1,1,1,0,0,0), \\
  \Bc_2 &= (1,2,0,0,1,0), \\
  \Bc_3 &= (2,1,0,1,0,0), \\
  \Bc_4 &= (2,2,0,0,0,1).
\end{align*}
By (7.1.1), the corresponding $\wt m(\Bc, \Bh)$ are given by
\begin{align*}
\wt m(\Bc_1, \Bh) &= f_{1'}f_1\cdot f_2f_{1'}f_1, \\
\wt m(\Bc_2, \Bh) &= f_{1'}^{(2)}f_1\cdot f_2f_1, \\
\wt m(\Bc_3, \Bh) &= f_{1'}f_1^{(2)}\cdot f_2f_{1'}, \\
\wt m(\Bc_4,\Bh)  &= f_{1'}^{(2)}f_1^{(2)}\cdot f_2. 
\end{align*}
Here $\wt m(\Bc_1, \Bh), \wt m(\Bc_4, \Bh)$ are $\s$-invariant, and
$\s$ permutes $\wt m(\Bc_2, \Bh)$ with $\wt m(\Bc_3, \Bh)$.
\par
Let $\Bi = (1', 1, 2, 1',1) \in I^5$ and $\Ba = (a_1, a_2, a_3, a_4, a_5) \in \NN^5$.
Then $\wt m(\Bc,\Bh)$ can be written as $F_{\Bi, \Ba}$, 
\begin{align*}
 \wt m(\Bc_1, \Bh) &= F_{\Bi, \Ba_1} \quad\text{ with }\quad \Ba_1 = (1,1,1,1,1), \\     
 \wt m(\Bc_2, \Bh) &= F_{\Bi, \Ba_2} \quad\text{ with }\quad \Ba_2 = (2,1,1,0,1), \\
 \wt m(\Bc_3, \Bh) &= F_{\Bi, \Ba_3} \quad\text{ with }\quad \Ba_3 = (1,2,1,1,0), \\
 \wt m(\Bc_4, \Bh) &= F_{\Bi, \Ba_4} \quad\text{ with }\quad \Ba_4 = (2,2,1,0,0). 
\end{align*}  

We consider the matrix $\vL = (F_{\Bi,\Ba_k}, F_{\Bi, \Ba_{k'}})_{1 \le k,k' \le 4}$. 
Since $\a = 2\a_1 + 2\a_{1'} + \a_2$, $\d_{\a} = (1-q^2)^5$. Furthermore
\begin{align*}
\g_{\Bi,\Ba_1} &= 1, \\ 
\g_{\Bi,\Ba_2} &= [2]^!      = q + q\iv,  \\
\g_{\Bi,\Ba_3} &= [2]^!      = q + q\iv,  \\
\g_{\Bi,\Ba_4} &= [2]^![2]^! = (q + q\iv)^2. 
\end{align*}

Then $\vL$ can be computed by making use of the formula (6.4.2), 
by replacing $F_{\Bi, \Ba_k}, F_{\Bi, \Ba_{k'}}$ by
$[P_{\Bi, \Ba_k}], [P_{\Bi,\Ba_{k'}}]$.  We have

{\footnotesize 
\begin{align*}
\tag{7.2.1}  
\vL &= \d_{\a}\iv \vG_{\Bi,\Ba}\iv
            \begin{pmatrix}    
               4  &   2(q+q\iv)  &  2(q+q\iv)  &   (q + q\iv)^2   \\ 
               2(q+q\iv)    &   2q\iv (q + q\iv)  &  (q+q\iv)^2  &  q\iv (q + q\iv)^2  \\
               2(q+q\iv)    &   (q + q\iv)^2   &   2q\iv(q + q\iv)  &   q\iv (q + q\iv)^2  \\
               (q + q\iv)^2  &  q\iv(q + q\iv)^2  &  q\iv(q + q\iv)^2  &  q^{-2}(q + q\iv)^2
             \end{pmatrix}\vG_{\Bi, \Ba}\iv,  \\   \\
    &=   \frac{1}{(1-q^2)^5(1+q^2)^2}
          \begin{pmatrix}
            4(1+q^2)^2   &  2(1+q^2)^2  &   2(1+q^2)^2   &  (1 + q^2)^2  \\
            2(1+q^2)^2   &  2(1+q^2)  &  (1 + q^2)^2   &   1 + q^2  \\
            2(1+q^2)^2   &  (1 + q^2)^2    &  2(1+q^2)  &   1+q^2   \\
           (1+q^2)^2    &  1 + q^2   &  1 + q^2    &   1
          \end{pmatrix}, 
\end{align*}
}
where $\vG_{\Bi,\Ba} = \Diag(\g_{\Bi, \Ba_1}, \g_{\Bi,\Ba_2}, \g_{\Bi,\Ba_3}, \g_{\Bi, \Ba_4})$.  
\par\medskip
As an example, we compute
$(F_{\Bi,\Ba_3}, F_{\Bi, \Ba_4}) =
     (f_{1'}f_1^{(2)}f_2f_{1'}, f_{1'}^{(2)}f_1^{(2)}f_2)$.
In this case, $\nu = (1',1,1,2,1')$ and $\nu' = (1',1',1,1,2)$.
$\Xi(\nu,\nu') = \{ \Bxi_1, \Bxi_2, \Bxi_3, \Bxi_4\}$, where
{\footnotesize 
\begin{align*}
\Bxi_1 &= \begin{pmatrix}
             \a_{1'} &  &  &  &   \\
                     &  &  \a_1 &   &  \\
                     &  &       &  \a_1  &  \\
                     &  &       &        &  \a_2  \\
                     &  \a_{1'}  &   &   &  
           \end{pmatrix},  \qquad
\Bxi_2 = \begin{pmatrix}
                  &  \a_{1'} &  &  &   \\
                  &  &  \a_1 &   &  \\
                  &  &       &  \a_1  &  \\
                  &  &       &        &  \a_2  \\
        \a_{1'}   &  &   &   &  
           \end{pmatrix},  \\
\Bxi_3 &= \begin{pmatrix}
         \a_{1'}  &  &  &  &   \\
                  &  &  &  \a_1  &  \\
                  &  &  \a_1 &   &  \\
                  &  &       &   &  \a_2  \\
                  &  \a_{1'} &   &   &  
           \end{pmatrix},  \qquad 
\Bxi_4 = \begin{pmatrix}
                  &  \a_{1'} &  &  &   \\
                  &  &  &  \a_1  &  \\
                  &  &  \a_1 &   &  \\
                  &  &       &   &  \a_2  \\
           \a_{1'}&  &   &   &  
           \end{pmatrix}. 
\end{align*}
}

The corresponding $w_i = w(\Bxi_i) \in S_5$ are given by
\begin{equation*}
  w_1 = (1,3,4,5,2), \quad w_2 = (2,3,4,5,1), \quad
  w_3 = (1,4,3,5,2), \quad w_4 = (2,4,3,5,1).
\end{equation*}  
Since $A(\Bxi) = \sum_{k<\ell; w(k) > w(\ell)}(\a_{\nu_k}, \a_{\nu_{\ell}})$
for $w = w(\Bxi)$, we have
\begin{align*}
  A(\Bxi_1) = -1, \quad A(\Bxi_2) = 1, \quad A(\Bxi_3) = 1, \quad A(\Bxi_4) = 3.
\end{align*}  
Thus $\sum_{\Bxi \in \Xi(\nu,\nu')}q^{-A(\Bxi)} = q\iv(q + q\iv)^2$. 
It follows that
\begin{align*}
\tag{7.2.2}
(F_{\Bi,\Ba_3}, F_{\Bi, \Ba_4})
      &= \d_{\a}\iv \g_{\Bi, \Ba_3}\iv \g_{\Bi,\Ba_4}\iv \times q\iv(q + q\iv)^2
       = \frac{1 + q^2}{(1 - q^2)^5(1 + q^2)^2}.
\end{align*}

All other inner products $(F_{\Bi, \Ba_k}, F_{\Bi, \Ba_{k'}})$ can be computed
similarly. 

\par\medskip
We have a matrix equation $\vL = {}^tHDH$, where $H$ is lower unitriangular,
and $D$ is diagonal. 
By using (7.2.1), $H$ and $D$ are determined from this equation as follows.
{\footnotesize
\begin{align*}
\tag{7.2.3}
  H &= \begin{pmatrix}
        1          &               &         &    \\
        1+q^2      &   1           &         &    \\
        1+q^2      &   1+q^2       &   1     &    \\
        (1+q^2)^2  &   q^2(1+q^2)  &  1+q^2  &   1
      \end{pmatrix},  \\
  D &=  \frac{1}{(1-q^2)^5(1 + q^2)^2}
           \begin{pmatrix}
             (1-q^4)^2   &    &   &   \\
                         &   1-q^4  &    &  \\
                         &      &   1-q^4  &    \\
                         &      &     &   1
           \end{pmatrix}.
\end{align*}
}

Note that $D = (L(\Bc_k, \Bh), L(\Bc_{k'}, \Bh))$ can be computed directly
by using the formula (1.4.1), which coincides with our computation.
\par
Now $H$ is decomposed as $H = PQ$, where $P,Q$ are lower unitriangular,
the coordinates of $Q$ are bar-invariant, and off-diagonals of $P$ are
contained in $q\ZZ[q]$.  Hence we have
{\footnotesize
\begin{align*}
\tag{7.2.4}  
 P = \begin{pmatrix}
       1   &        &      &   &    \\
      q^2  &  1     &      &   &    \\
       0   &  q^2  &  1    &   &    \\
       q^2  &  q^4  &  q^2  &  1
     \end{pmatrix}, \qquad
 Q = \begin{pmatrix}
      1  &     &     &  &  \\
      1  &  1  &     &  &  \\
      1  &  1  &  1  &   &   \\
      1  &  0  &  1  &  1  &
     \end{pmatrix}. 
\end{align*}  
}

Note that these matrices are indexed by $\Ba_1, \dots, \Ba_4$.
$\s$ leaves $\Ba_1, \Ba_4$ stable, and permutes $\Ba_2, \Ba_3$. 
It follows that
{\footnotesize
\begin{align*}
\tag{7.2.5}
  \vL^{\s} &= \d_{\a}\iv \begin{pmatrix}
              \g_{\Bi, \Ba_1}\iv   &   \\
                          &  \g_{\Bi, \Ba_4}\iv
            \end{pmatrix}
            \begin{pmatrix}
             4  &   (q + q\iv)^2  \\
             (q + q\iv)^2        &   (1 + q^{-2})^2 
            \end{pmatrix}
            \begin{pmatrix}
              \g_{\Bi,\Ba_1}\iv    &    \\
                           &  \g_{\Bi,\Ba_4}\iv
            \end{pmatrix}  \\
         &= \frac{1}{(1-q^2)^3(1 - q^4)^2}
              \begin{pmatrix}
               4(1+q^2)^2 &    (1 + q^2)^2   \\
              (1+q^2)^2   &     1 
              \end{pmatrix}.     
\end{align*}
}

\para{7.3.}
We keep the setup in 7.1, and consider $\ul\BU_q^-$ of type $B_2$ with $J = \{ \ul 1, \ul 2\}$,
where $\a_{\ul 1}$ is a long root
and $\ul\a_2$ is a short root. 

Let $\Bj = (\ul 1, \ul 2, \ul 1)$, and $\ul \a = 2\a_{\ul 1} + \a_{\ul 2}$. 
Consider the monomial basis
\begin{align*}
  m(\ul\Bc_1, \ul\Bh) &= F_{\Bj, \ul\Ba_1} = f_{\ul 1}f_{\ul 2}f_{\ul 1}, \\
  m(\ul\Bc_2, \ul\Bh) &= F_{\Bj, \ul\Ba_2} = f_{\ul 1}^{(2)}f_{\ul 2}, 
\end{align*}
where $\ul\Ba_1 = (1,1,1), \ul\Ba_2 = (2,1,0) \in \NN^3$,
and $\ul\Bc_1 = (1,1,0,0), \ul\Bc_2 = (2,0,0,1)$.    
We have
\begin{equation*}
  \d_{\ul\a} = (1-q^4)^2(1-q^2), \quad
      \g_{\Bj, \ul\Ba_1} = 1, \quad \g_{\Bj,\ul\Ba_2} = q^2 + q^{-2}. 
\end{equation*}
\par
The inner products $(F_{\Bj, \ul\Ba_k}, F_{\Bj, \ul\Ba_{k'}})$ are computed similarly,
and we have

\begin{align*}
\tag{7.3.1}
\ul\vL = \d_{\ul\a}\iv \begin{pmatrix}
                         \g_{\Bj,\ul\Ba_1}\iv   &    \\
                             &    \g_{\Bj,\ul\Ba_2}\iv
                       \end{pmatrix}
                       \begin{pmatrix}
                            2           &    q^2 + q^{-2}   \\
                           q^2 + q^{-2}    &   1 + q^{-4}
                        \end{pmatrix}
                       \begin{pmatrix}
                             \g_{\Bj, \ul\Ba_1}\iv   &     \\
                                  &     \g_{\Bj,\ul\Ba_2}\iv
                        \end{pmatrix}.     
\end{align*}

The matrix equation $\ul\vL = {}^t\ul H \ul D \ul H$ determines $\ul H$ and $\ul D$
as follows.

{\footnotesize
\begin{align*}
\tag{7.3.2}  
\ul H = \begin{pmatrix}
           1  &   \\
           1 + q^4  &    1
        \end{pmatrix},   \qquad
\ul D = \begin{pmatrix}
            \frac{1}{(1-q^2)(1-q^4)}  &   \\
                   &   \frac{1}{(1-q^2)(1-q^4)(1-q^8)}
        \end{pmatrix}.
\end{align*}  
}

Finally we have
{\footnotesize
\begin{equation*}
\tag{7.3.3}  
\ul H = \begin{pmatrix}
          1  &     \\    
          1 + q^4  &  1 
         \end{pmatrix}
      =  \ul P \ul Q  = \begin{pmatrix}
                            1   &   \\
                            q^4  &   1
                       \end{pmatrix}     
                       \begin{pmatrix}
                          1   &    \\
                          1   &   1 
                       \end{pmatrix}.
\end{equation*}  
}

\remark{7.4.}
The matrix {\footnotesize $\begin{pmatrix}
               2  &   q^2 + q^{-2}  \\
               q^2 + q^{-2}  &  1 + q^{-4}
             \end{pmatrix}$}
appearing in $\ul\vL$ in (7.3.1) can be computed from the matrix
             {\footnotesize
             $\begin{pmatrix}
                4  &  q^2 + q^{-2} + 2  \\
                q^2 + q^{-2} + 2   &  1 + q^{-4} + 2q^{-2}
             \end{pmatrix}$}
appearing in $\vL^{\s}$ in (7.2.5) by using the injection
$\vf : \Xi(\ul\nu, \ul\nu') \to \Xi(\nu,\nu')$ in Lemma 6.6.    
Also note that these two matrices coincide each other over $(\BZ/2\BZ)[q, q\iv]$. 

\para{7.5}
Assume that $\BU_q^-$ is of type $D_4$ with $I = \{ 1,1',1'',2\}$.
such that $\s : 1 \mapsto 1' \mapsto 1'' \mapsto 1$, and $\s(2) = 2$.
We assume that $(\a_i, \a_i) = 2$ for $i \in I$, and $(\a_i,\a_j) = -1$ if
$i$ and $j$ are joined. 
We have $J = \{ \ul 1, \ul 2\}$ and $\ul\BU_q^-$ is of type $G_2$. 
Thus $(\a_{\ul 1}, \a_{\ul 1}) = 6, (\a_{\ul 2}, \a_{\ul 2}) = 2$, and
$(\a_{\ul 1}, \a_{\ul 2}) = -3$. 
We follow the notation in 4.3 (D).
In particular, $\Bh$ and $\ul \Bh$ are given by (4.3.12) and (4.3.13).
The total order of $\vD^+$ associated to $\Bh$
is given (see the proof of Lemma 4.13, (D)) as 
\begin{equation*}
\vD^+ = \{ 1, 1', 1'', 11'1''2, 1'1''2, 11''2, 11'2, 11'1''22, 12, 1'2, 1''2, 2\}.
\end{equation*}
$\s$ acts on $\vD^+$ by $1 \mapsto 1' \mapsto 1'' \mapsto 1$,
$1'1''2 \mapsto 11''2 \mapsto 111'2 \mapsto 1'1''2$, $12 \mapsto 1'2 \mapsto 1''2 \mapsto 12$,
and leaves $11'1''2, 11'1''22, 2$ stable. 
Correspondingly, we have an action of $\s$ on $\Bc = (c_1, \dots, c_{12}) \in \NN^{12}$ by 
$c_1 \mapsto c_2 \mapsto c_3 \mapsto c_1$, 
$c_5 \mapsto c_6 \mapsto c_7 \mapsto c_5$, $c_9 \mapsto c_{10} \mapsto c_{11} \mapsto c_9$,
and $\s$ leaves $c_4, c_8, c_{12}$ stable. 
\par
The total order of $\ul\vD^+$ associated to $\ul\Bh$ is given by
$\ul\vD^+ = \{ \ul 1, \ul{12}, \ul{11222}, \ul{122}, \ul{1222}, \ul 2\}$. 
\par
Let $\wt m(\Bc, \Bh) = F(\wt \Bd^1)\cdots F(\wt \Bd^6)$ be the monomial basis of $\BU_q^-$.
We have 
{\footnotesize
\begin{equation*}
\begin{aligned}
  \wt\Bc_1 &= (c_1, c_2, c_3, 0, \dots, 0)  &\quad &\longleftrightarrow
                \quad   \wt \Bd^1= (c_1, c_2, c_3, 0),  \\
  \wt\Bc_2 &= (0,0,0, c_4, 0, \dots, 0) &\quad &\longleftrightarrow
                \quad    \wt \Bd^2 = (c_4, c_4, c_4, c_4), \\
  \wt\Bc_3 &= (0, \dots, 0, c_5, c_6, c_7, 0, \dots, 0)  &\quad &\longleftrightarrow
                \quad    \wt \Bd^3 = (c_6 + c_7, c_5 + c_7, c_5+c_6, c_5 + c_6 + c_7),    \\
  \wt\Bc_4 &= (0,\dots, 0, c_8, 0, \dots, 0) &\quad &\longleftrightarrow
                 \quad    \wt \Bd^4 = (c_8, c_8, c_8, 2c_8),  \\
  \wt\Bc_5 &= (0, \dots, 0, c_9, c_{10}, c_{11}, 0)  &\quad &\longleftrightarrow
                  \quad   \wt \Bd^5 = (c_9, c_{10}, c_{11}, c_9 + c_{10} + c_{11}),  \\
\wt\Bc_6 &= (0,\dots, 0, c_{12}) &\quad  &\longleftrightarrow 
                 \quad    \wt \Bd^6 = (0,0,0,c_{12}). 
\end{aligned}
\end{equation*}
}

Hence $\wt m(\Bc, \Bh) = F(\wt\Bd^1)\cdots F(\wt\Bd^6)$ is expressed in a similar
formula as (7.1.1).

In turn, for $\ul\Bc = (\ul c_1, \dots, \ul c_6)$, we have

{\footnotesize
\begin{equation*}
\begin{aligned}
\ul\Bc_1 &= (\ul c_1, 0, \dots, 0) &\quad &\longleftrightarrow
                  &\quad \ul\Bd^1 &= (\ul c_1, 0),   \\
\ul\Bc_2 &= (0, \ul c_2, 0, \dots, 0)  &\quad &\longleftrightarrow
            &\quad \ul \Bd^2 &= (\ul c_2, \ul c_2), \\
\ul\Bc_3 &= (0,0, \ul c_3, 0, 0, 0) &\quad &\longleftrightarrow
             &\quad \ul \Bd^3 &= (2\ul c_3, 3 \ul c_3),  \\
\ul\Bc_4 &= (0, 0,0, \ul c_4, 0,0)  &\quad &\longleftrightarrow
              &\quad \ul\Bd^4 &= (\ul c_4, 2\ul c_4),  \\
\ul\Bc_5 &= (0,\dots, 0, \ul c_5, 0) &\quad &\longleftrightarrow
               &\quad \ul\Bd^5 &= (\ul c_5, 3\ul c_5),  \\
\ul\Bc_6 &= (0, \dots, 0, \ul c_6)   &\quad &\longleftrightarrow
               &\quad \ul\Bd^6 &= (0, \ul c_6). 
\end{aligned}
\end{equation*}
}

Hence $m(\ul\Bc, \ul\Bh) = F(\ul\Bd^1)\cdots F(\ul\Bd^6)$ is expressed as in (7.1.2). 

\para{7.6}
Under the setting in 7.5, we consider the case where the weight is given by
$\a = 2\a_1 + 2\a_{1'} + 2\a_{1''} + \a_2$. We can compute the matrix $\vL$
similarly to the case $A_3$.
But since $\vL$ is big (the size of $\vL$ is 8),
we concentrate our attention to the $\s$-fixed part
$\vL^{\s}$.
Then $\s$-invariant PBW bases $L(\Bc, \Bh)$ such that $\weit (L(\Bc,\Bh)) = \a$ are given by
\begin{equation*}
\begin{aligned}
  L(\Bc_1, \Bh) &= F_1F_{1'}F_{1''}F_{11'1''2}  &\quad\text{ with } &\quad
                   \Bc_1 = (1,1,1,1, 0, \dots, 0), \\
L(\Bc_2, \Bh) &= F_1^{(2)}F_{1'}^{(2)}F_{1''}^{(2)}F_2 &\quad\text{ with } &\quad
                    \Bc_2 = (2,2,2, 0,\dots, 0,1).
\end{aligned}
\end{equation*}
Accordingly, $\s$-invariant monomial bases are given by
\begin{equation*}
\begin{aligned}
  \wt m(\Bc_1, \Bh) &= f_{1''}f_{1'}f_1\cdot f_2f_{1''}f_{1'}f_1 &= F_{\Bi, \Ba_1}, \\
  \wt m(\Bc_2, \Bh) &= f_{1''}^{(2)}f_{1'}^{(2)}f_1^{(2)}\cdot f_2 &= F_{\Bi, \Ba_2},
\end{aligned}
\end{equation*}
where $\Bi = (1'',1',1,2,1'',1',1)$ and $\Ba_ 1= (1,1,1,1,1,1,1), \Ba_2 = (2,2,2,1,0,0,0)$. 
\par
We now compute $\vL^{\s}$. In this case, we have
\begin{equation*}
\d_{\a} = (1-q^2)^7, \quad \g_{\Bi, \Ba_1} = 1, \quad \g_{\Bi, \Ba_2} = [2]^3 = (q + q\iv)^3.
\end{equation*}

Then

\begin{equation*}
\tag{7.6.1}  
  \vL^{\s} = \d_{\a}\iv \begin{pmatrix} 
                         \g_{\Bi, \Ba_1}\iv  &    \\
                                 &   \g_{\Bi, \Ba_2}\iv
                         \end{pmatrix}
                         \begin{pmatrix}
                                  8  &  (q + q\iv)^3  \\
                         (q + q\iv)^3  & (1 + q^{-2})^3
                         \end{pmatrix}
                         \begin{pmatrix}
                               \g_{\Bi, \Ba_1}\iv  &    \\
                                   &    \g_{\Bi,\Ba_2}\iv
                         \end{pmatrix}.      
\end{equation*}

\para{7.7.}
We keep the setup in 7.5, and consider $\ul\BU_q^-$ of type $G_2$.
Let $\ul\a = 2\a_{\ul 1} + \a_{\ul 2}$. Then the PBW bases $L(\ul\Bc, \ul\Bh)$ of
weight $\ul\a$ are given by
\begin{equation*}
\begin{aligned}
L(\ul\Bc_1, \ul\Bh) &= F_{\ul 1}F_{\ul{12}} &\quad\text{ with } &\quad \ul\Bc_1 = (1,1,0,0,0,0), \\  
L(\ul\Bc_2, \ul\Bh) &= F_{\ul 1}^{(2)}F_{\ul 2}  &\quad\text{ with }
                         &\quad\ul\Bc_2 = (2,0,0,0,0,1).
\end{aligned}
\end{equation*}

The corresponding monomial bases are given by
\begin{equation*}
\begin{aligned}
  m(\ul\Bc_1, \ul\Bh_1) &= f_{\ul 1}f_{\ul 2}f_{\ul 1} &= F_{\Bj, \ul\Ba_1},  \\     
  m(\ul\Bc_2, \ul\Bh_2) &= f_{\ul 1}^{(2)}f_{\ul 2}    &= F_{\Bj, \ul\Ba_2},
\end{aligned}
\end{equation*}
where $\Bj = (\ul 1, \ul 2, \ul 1)$ and $\ul\Ba_1 = (1,1,1), \ul\Ba_2 = (2,1,0)$.
Then $\ul\vL$ is computed as follows,

\begin{equation*}
\tag{7.7.1}  
  \ul\vL = \d_{\ul\a}\iv \begin{pmatrix}
                           \g_{\Bj, \ul\Ba_1}\iv  &    \\
                                &   \g_{\Bj, \ul\Ba_2}\iv
                          \end{pmatrix}
                          \begin{pmatrix}
                            2   &  q^3 + q^{-3}  \\
                            q^3 + q^{-3}  &  1 + q^{-6}
                          \end{pmatrix}  
                          \begin{pmatrix}
                            \g_{\Bj, \ul\Ba_1}\iv  &   \\
                                    &  \g_{\Bj, \ul\Ba_2}\iv
                          \end{pmatrix},  
\end{equation*}  

By comparing (7.6.1) and (7.7.1), we see that Remark 7.4 still holds for
$D_4$ and $G_2$. 
\par
The matrix equation $\ul\vL = {}^t\ul H \ul D \ul H$ determines $\ul H$ and $\ul D$
as follows.

{\footnotesize
\begin{align*}
\tag{7.7.2}  
\ul H = \begin{pmatrix}
           1  &   \\
           1 + q^6  &    1
        \end{pmatrix},   \qquad
\ul D = \begin{pmatrix}
            \frac{1}{(1-q^3)(1-q^6)}  &   \\
                   &   \frac{1}{(1-q^3)(1-q^6)(1-q^9)}
        \end{pmatrix}.
\end{align*}  
}

Finally we have
{\footnotesize
\begin{equation*}
\tag{7.7.3}  
\ul H = \begin{pmatrix}
          1  &     \\    
          1 + q^6  &  1 
         \end{pmatrix}
      =  \ul P \ul Q  = \begin{pmatrix}
                            1   &   \\
                            q^6  &   1
                       \end{pmatrix}     
                       \begin{pmatrix}
                          1   &    \\
                          1   &   1 
                       \end{pmatrix}.
\end{equation*}  
}

\par\bigskip

\par\vspace{1.5cm}
\noindent
T. Shoji \\
School of Mathematical Sciences, Tongji University \\ 
1239 Siping Road, Shanghai 200092, P.R. China  \\
E-mail: \verb|shoji@tongji.edu.cn|

\par\vspace{0.5cm}
\noindent
Z. Zhou \\
School of Digital Science, Shanghai Lida University \\ 
1788 Cheting Road, Shanghai 201608, P.R. China  \\
E-mail: \verb|forza2p2h0u@163.com|

\end{document}